\definecolor{lightgray}{gray}{0.9}
\theoremstyle{definition}
\newtheorem{definition}{Definition}[section]
\newtheorem{remark}[definition]{Remark}
\theoremstyle{plain}
\newtheorem{theorem}[definition]{Theorem}
\newtheorem{lemma}[definition]{Lemma}
\newtheorem{proposition}[definition]{Proposition}
\newtheorem{corollary}[definition]{Corollary}
\DeclareMathOperator{\rk}{rk}
\DeclareMathOperator{\id}{id}
\DeclareMathOperator{\lcm}{lcm}
\DeclareMathOperator{\Hom}{Hom}
\DeclareMathOperator{\NS}{NS}
\DeclareMathOperator{\Aut}{Aut}
\DeclareMathOperator{\cha}{char}
\DeclareMathOperator{\Fr}{Fr}
\DeclareMathOperator{\End}{End}
\DeclareMathOperator{\Id}{Id}
\DeclareMathOperator{\res}{res}
\newcommand{\QQ}{\mathbb{Q}}
\newcommand{\FF}{\mathbb{F}}
\newcommand{\RR}{\mathbb{R}}
\newcommand{\CC}{\mathbb{C}}
\newcommand{\ZZ}{\mathbb{Z}}
\renewcommand{\O}{\mathcal{O}}
\title[Minimal entropy]{Automorphisms of minimal entropy on supersingular K3 surfaces}
\author{Simon Brandhorst, Víctor González-Alonso}
\address{Insitut für Algebraische Geometrie, Leibniz Universität Hannover,
	Welfengarten 1, 30167 Hannover, Germany}
\email{brandhorst@math.uni-hannover.de, gonzalez@math.uni-hannover.de}
\date{September, 2016}
\keywords{supersingular K3 surface, Salem number, entropy, automorphism, hyperbolic lattice}
\subjclass[2010]{Primary: 14J28, Secondary: 14J50, 14G17}
\begin{document}
\begin{abstract}
 In this article we give a strategy to decide whether the logarithm of a given Salem number is realized as entropy of an automorphism of a supersingular K3 surface in positive characteristic. As test case it is proved that $\log \lambda_d$, where $\lambda_d$ is the minimal Salem number of degree $d$, is realized in characteristic $5$ if and only if $d\leq 22$ is even and $d\neq 18$.
 In the complex projective setting we settle the case of entropy $\log \lambda_{12}$, left open by McMullen, by giving the construction. A necessary and sufficient test is developed to decide whether a given isometry of a hyperbolic lattice, with spectral radius bigger than one, is positive, i.e. preserves a chamber of the positive cone.
\end{abstract}
\maketitle
\tableofcontents
\section{Introduction}

A Salem number is a real algebraic integer $\lambda>1$ which is conjugate to $1/\lambda$ and whose other conjugates lie on the unit circle. 
In each even degree $d$ there is a unique smallest Salem number $\lambda_d$. Conjecturally the smallest Salem number is $\lambda_{10}$, found by Lehmer in 1933 \cite{lehmer:lambda10}.

If $F: X \rightarrow X$ is a biholomorphic map of a compact K\"ahler surface, then its topological entropy $h(F)$ is a measure for the disorder created by subsequent iterations of $F$. In general $h(F)$ is either zero or the logarithm of a Salem number $\lambda$, which is precisely the spectral radius of the linear action $F^*$ in $H^2(X,\mathbb{Z})$.

In \cite{esnault-srinivas:algebraic_entropy} Esnault and Srinivas show that if $F:X \rightarrow X$ is an automorphism of a \emph{projective} surface $X$ over a field $\kappa$, then the order of $f=F^*$ on $\NS(X)^\perp \subseteq H^2_{et}(X,\mathbb{Q}_l)$, $l\neq \cha \kappa$, is finite. 
Hence the spectral radius of $f$ is realized already in the Néron-Severi group $\NS(X)$, and by standard arguments for isometries of hyperbolic lattices it is then a Salem number.
We can define the (algebraic) entropy $h(F)$ as the logarithm of the spectral radius of $f|\NS(X)$ and the Salem degree of $f$ as the degree of this Salem number. For complex surfaces, the standard comparison results between singular and \'etale cohomology imply that the algebraic entropy coincides with the topological one.

For projective surfaces, the Salem degree is thus bounded by the rank $\rho$ of the Néron-Severi group. For $K3$ surfaces in characteristic zero this is at most $h^{1,1}(X)=20$, due to Hodge theory. However in positive characteristic, $\rho=\rk \NS(X)=22$ is possible (these are precisely the supersingular $K3$ surfaces). Since the algebraic entropy is stable under specialization, an automorphism of Salem degree $22$ in positive characteristic cannot lift to characteristic zero and neither does any of its powers (see \cite{esnault-oguiso:non-liftability}). Proofs of existence and explicit examples of such automorphisms (for Artin invariant one) have been recently studied in a number of articles \cite{brandhorst:salem22,schuett:salem22,esnault-oguiso-yu:salem22,shimada:salem22}.
Recent preprints \cite{brandhorst:higher_artin,yu:salem22} prove that {\em every} supersingular K3 surface in odd characteristic admits an automorphism of Salem degree 22. 

The study of the entropy of $F:X \rightarrow X$ becomes trivial if $X$ has positive Kodaira dimension (e.g., if $X$ is of general type, then a power of $F$ is the identity and hence $h(F)=0$). Indeed, if $F$ has positive entropy, then $X$ is either a blow up of $\mathbb{P}^2$ in at least $10$ points, a torus, a K3-surface or an Enriques surface \cite{cantat:classification,nagata:classification}. 

Instead of considering only the Salem degree of an automorphism, in this work we focus on the existence of automorphisms of (supersingular) K3 surfaces with a given entropy, and more precisely, logarithms of the minimal Salem numbers $\lambda_d$. For complex projective K3 surfaces, it is proved in \cite{mcmullen:minimum} that $\lambda_d$ is the spectral radius of an automorphism for $d=2,4,6,8,10$ or $18$, but not if $d=14,16$ or $d\geq 20$, while the case $d=12$ is left open. As a byproduct of our work, we are able to realize also $\lambda_{12}$ in the complex case (see Appendix \ref{app:lambda12_complex}), hence proving the following

\begin{theorem}[Improvement of Theorem 1.2 in \cite{mcmullen:minimum}]\label{thm:complex}
 The value $\log \lambda_d$ arises as the entropy of an automorphism of a complex projective K3 surface if and only if $d=2,4,6,8,10,12$ or $18$. 
\end{theorem}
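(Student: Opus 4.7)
The plan is to follow the lattice-theoretic recipe going back to McMullen. The ``only if'' direction as well as realizability for $d\in\{2,4,6,8,10,18\}$ are already in \cite{mcmullen:minimum}, so only the construction for $d=12$ remains. Concretely I would build a Hodge isometry $f$ of the K3 lattice (of signature $(3,19)$) which decomposes orthogonally as $f_S\oplus f_T$, where $f_S$ acts on a hyperbolic sublattice $S$ of signature $(1,11)$ with characteristic polynomial equal to the Salem polynomial $s_{12}$ of $\lambda_{12}$, and $f_T$ acts on the complementary lattice $T$ of signature $(2,8)$ as a product of cyclotomic factors.

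First I would enumerate the integral conjugacy classes of isometries of rank-$12$ hyperbolic lattices with characteristic polynomial $s_{12}$, starting from the companion lattice of $s_{12}$ and sweeping through its finite-index overlattices. Among these candidates the crucial filter is positivity: $f_S$ must preserve a chamber of the positive cone in $S\otimes\RR$, so that its induced action admits a fixed Weyl chamber. This is exactly where the necessary and sufficient positivity test developed in the main body of the paper enters. In parallel, on the transcendental side, I would search for an isometry $f_T$ of a rank-$10$ lattice of signature $(2,8)$ whose characteristic polynomial is a product of cyclotomic polynomials and which admits an eigenline $\omega\in T\otimes\CC$ with $\omega^2=0$ and $\omega\bar\omega>0$; this eigenline will serve as the period of the K3 surface $X$ to be constructed.

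Having found suitable $(f_S,f_T)$, the next step is to glue them along their discriminant forms via Nikulin's theory, producing an isometry $f$ of the K3 lattice compatible with the Hodge structure defined by $\omega$. The strong Torelli theorem then yields a unique complex K3 surface $X$ and an automorphism $F\colon X\to X$ with $F^*=f$, provided $f$ sends the Kähler cone to itself; this follows from positivity of $f_S$ on the ample cone of $S\subseteq\NS(X)$, possibly after composing with an element of the Weyl group. Projectivity of $X$ is automatic since $S$ is hyperbolic and $f_S$-invariant, hence contains a class of positive square that, after averaging along a finite orbit of $f_S$ or applying the standard chamber argument, yields an ample divisor.

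The hard part will be the simultaneous realization of all these constraints. A candidate Salem isometry $f_S$ with characteristic polynomial $s_{12}$ must be positive, its discriminant form must admit a Nikulin gluing with some cyclotomic complement of the prescribed signature, and the resulting Hodge structure must land in the period domain. It is precisely the difficulty of verifying positivity ``by hand'' that left $d=12$ open in \cite{mcmullen:minimum}; the new algorithmic test derived in this paper is what should make the case tractable and single out an explicit isometry that can be promoted to an automorphism.
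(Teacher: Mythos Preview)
Your plan follows McMullen's lattice-theoretic strategy and correctly identifies the new quadratic positivity test as the missing ingredient, which is exactly what the paper does. However, there is a genuine gap in where you apply the positivity test, and the paper's actual construction differs from your two-piece decomposition in an essential way.

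You propose to check positivity of $f_S$ on the rank-$12$ Salem lattice $S$ and then infer that $f$ preserves the K\"ahler cone. But the K\"ahler (ample) cone lives in $\NS(X)\otimes\RR$, and in general $\NS(X)\supsetneq S$: once you pick the period $\omega$ as an eigenline of $f_T$ for a primitive $n$-th root of unity, every integral class in the remaining eigenspaces of $f_T$ (those with eigenvalue $\neq\zeta_n,\bar\zeta_n$) lands in $\NS(X)$. Positivity of $f|S$ does \emph{not} imply positivity of $f|\NS(X)$, since the larger lattice may contain new obstructing roots; and composing with a Weyl element does not fix this without altering the characteristic polynomial. In the paper's construction the transcendental lattice is a rank-$4$ $c_{12}(x)$-lattice of signature $(2,2)$, so $\NS(X)$ has rank $18$: it is the Salem twist $L_0(ua)$ (with $a$ above the feasible prime $13$) glued over $7$ to $(A_6,g)$, and the quadratic positivity test is run on this rank-$18$ lattice. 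The linear test fails there (it returns $\mu=-2$), and only the quadratic refinement of Theorem~\ref{thm:quadratic_test} certifies positivity.

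A minor point: enumerating ``finite-index overlattices of the companion lattice'' is not the right parametrization. Since $s_{12}$ is simple, every $s_{12}(x)$-lattice is a \emph{twist} $L_0(a)$ of the principal lattice (same underlying $\O_K$-module, different bilinear form), and the search is over $a\in\O_k$ supported on the feasible primes $7,13,31$; the paper finds that the twist above $13$ is the one that works.
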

The proof involves methods from integer linear programming, lattice theory, number fields, reflection groups and the Torelli theorem for complex K3 surfaces. 

The main purpose of this work is to extend the tools developed for the proof of this theorem in \cite{mcmullen:siegel_disk,mcmullen:entropy_and_glue,mcmullen:minimum} to supersingular K3 surfaces in positive characteristic. 
The reason to consider the supersingular case is that there is a Torelli theorem readily available while in the non-supersingular case most automorphisms (all for $p\geq 23$) lift to characteristic zero (cf \cite{jang:lift}) and can be treated there. 
In order to illustrate the techniques, we prove the following
\begin{theorem}
 The value $\log \lambda_d$ arises as the entropy of an automorphism of a supersingular K3 surface over a field of characteristic $p=5$ if and only if $d\leq 22$ is even and $d\neq 18$.
\end{theorem}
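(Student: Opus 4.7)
The plan is to reduce the question, via Ogus' Torelli theorem for supersingular K3 surfaces, to producing or obstructing certain isometries of the Néron-Severi lattice, and then to treat each value of $d$ individually. Recall that $\NS(X)$ for a supersingular K3 surface in characteristic $5$ is, up to isomorphism, an even lattice of rank $22$, signature $(1,21)$, and discriminant $-5^{2\sigma_0}$ with $\sigma_0\in\{1,\dots,10\}$ the Artin invariant. An automorphism $F$ induces an isometry $f = F^*$ of this lattice which is positive, preserves an ample class, and whose characteristic polynomial factors as $S(x)\cdot C(x)$ with $S$ a Salem polynomial of even degree $d\leq 22$ and $C$ a product of cyclotomic polynomials of degree $22-d$.

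For the necessity direction, $d$ even and $d\leq 22$ are immediate. To rule out $d=18$ in characteristic $5$, I would enumerate all products $C$ of cyclotomic polynomials of degree $4$, and for each such $C$ and each Artin invariant $\sigma_0$, enumerate all possible gluings of $\ZZ[x]/(S_{18})$ and $\ZZ[x]/(C)$ into a lattice of the prescribed genus, using the strong constraints coming from the $5$-adic discriminant form. The positivity test from the paper is then applied to each resulting candidate, and the claim is that none is positive. This is the analogue of McMullen's analysis of $d=14,16$ and $d\geq 20$ in the complex case, but with different $p$-adic input.

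For the sufficiency direction, I would treat each remaining even $d\in\{2,4,6,8,10,12,14,16,20,22\}$ case by case. For $d\leq 20$ the procedure is: find a cyclotomic complement $C$ of degree $22-d$ (by a small integer linear programming search) such that $S_d(x)\cdot C(x)$ admits a positive isometry of $\NS(X)$ for some $\sigma_0$; construct this isometry explicitly via the gluing procedure from $\ZZ[x]/(S_d)$ and $\ZZ[x]/(C)$; verify positivity and exhibit an $f$-invariant chamber containing an ample class. Ogus' Torelli theorem then realizes $f$ as $F^*$ of an automorphism. The case $d=22$ is handled by appealing to the explicit Salem degree $22$ constructions in \cite{brandhorst:salem22,schuett:salem22,shimada:salem22}, after verifying that one of them realizes $\lambda_{22}$ and not a larger Salem number.

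The main obstacle will be the exclusion of $d=18$: one must combinatorially enumerate all finitely many gluings compatible with the $5$-adic supersingular discriminant form for each $\sigma_0$, and rule out positivity in each case. The interplay between the arithmetic of $S_{18}(x)$ modulo $5$ and the constraints forced by the supersingular lattice is what is expected to make $d=18$ fail in characteristic $5$, in contrast to the complex setting. A secondary difficulty is to ensure, for small $d$, that among the many admissible cyclotomic complements one can be chosen so that both positivity and chamber preservation hold simultaneously.
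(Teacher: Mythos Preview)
Your overall framework is right, but you misidentify the crucial obstruction for $d=18$, and correspondingly miss the key realizability condition on the sufficiency side.

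The paper's Proposition \ref{prop:perfect_power} shows that for any automorphism of a supersingular K3 surface, the minimal polynomial of $\overline{f}$ on $A_{\NS}\cong\FF_p^{2\sigma}$ is \emph{irreducible}. This is a genuinely new constraint in the supersingular setting, coming from the period $P_X$, and it is the actual reason $\lambda_{18}$ fails in characteristic $5$: after the twist enumeration one is forced into a single twist of norm $13$ with cyclotomic factor $c_{12}$, and since $5$ stays prime in the trace field of $s_{18}$ but splits in the Salem field, the action on the $5$-discriminant group has reducible minimal polynomial. Positivity plays no role here. Your plan to ``rule out positivity in each case'' would not produce the obstruction.

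Dually, on the sufficiency side the irreducibility of $\chi_{\overline{f}|A_N}$ is exactly the extra hypothesis (beyond positivity) needed in Theorem \ref{thm:realized} to invoke the crystalline Torelli theorem. So for each $d$ you must arrange the twist so that this irreducibility holds, typically by choosing a prime of $\mathcal{O}_k$ above $5$ that remains inert in $\mathcal{O}_K$. Your proposal omits this step entirely; without it you cannot pass from a positive isometry of $N$ to an actual automorphism. Two smaller points: for $d=22$ the paper does not appeal to the existing Salem-degree-$22$ literature (those constructions need not realize the \emph{minimal} $\lambda_{22}$) but builds the isometry directly as a twist of the principal $s_{22}$-lattice; and for $d=2,4$ the paper bypasses the lattice machinery altogether, using specialization from a Kummer surface over $\QQ$.
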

Here $p=5$ is chosen because it is the smallest prime for which the crystalline Torelli theorem is fully proven. The same methods apply for any other $p\geq 5$.  
They handle a single Salem number and one characteristic at a time (sometimes we can deal with $p$ ranging in an arithmetic progression in the spirit of \cite{brandhorst:salem22,schuett:salem22}).

In what follows we highlight some of the differences and challenges between the complex and the supersingular cases. 
Let $\lambda$ be a Salem number, $s(x)$ its minimal polynomial.

In the complex case let $F: X \rightarrow X$ be an automorphism of a projective K3 surface over $\CC$ with $h(F)=\log \lambda$. 
The singular cohomology $H^2(X,\ZZ)$ carries an integral bilinear form turning it into an even $\emph{unimodular}$ lattice of signature $(3,19)$, on which $f=F^*$ acts as an isometry. It respects further structure such as the Hodge decomposition and the ample cone in
$\NS(X) \otimes \mathbb{R} \subseteq H^2(X,\RR)$. The Torelli theorem states that this datum determines the pair $(X,f)$ up to isomorphism and conversely, that each (good) datum is coming from such a pair. 
So, in order to construct examples one has to produce a certain lattice together with a (suitable) isometry on it. 

The characteristic polynomial of $f$ factors as
\[\chi(f|H^2(X,\ZZ))=s(x) c(x)\]
where $c(x)$ is a product of cyclotomic polynomials.
The Salem and cyclotomic factors are defined then as
\[S:=\ker s(f|H^2(X,\ZZ)) \quad C:=\ker c(f|H^2(X,\ZZ)).\]
They are lattices of signatures $(1,d-1)$ and $(2,20-d)$,
$C=S^\perp$ and $S\oplus C$ is of finite index in $H^2(X,\ZZ)$.
From the unimodularity of the latter we get an isomorphism (called glue map) of discriminant groups
\[A_S\cong A_C\] compatible with the action of $f$.
It follows that the polynomials $s(x)$ and $c(x)$ have a common factor modulo any prime $q$ dividing $\det S$. Indeed, the minimal polynomials of $f|A_S/qA_S$ and $f|A_C/qA_C$ agree and divide $s(x)$ and $c(x)$ modulo $q$. 
The possible values of these \emph{feasible} primes are readily computed from $S$ alone, thus limiting possibilities for $S$ (and $C$). 

To reverse the process one first constructs models for $S$ and $C$ by number and lattice theory (sect. $\!\!\!\!\!\!$ \ref{sec:lattices_numberfields}) and then {\em glues} them together via the isomorphism $A_S \cong A_C$ to obtain a model for $H^2(X,\ZZ)$ together with an isometry $f$.
It is then checked that $f$ preserves a Hodge structure, represented by a suitable eigenvector of $f|H^2(X,\ZZ)\otimes \CC$. The crucial step is to check whether $f|NS(X) \otimes \mathbb{R}$ preserves a chamber representing the ample cone cut out by the nodal roots. In general it is hard to compute the (infinitely many) nodal roots, hence in \cite{mcmullen:minimum} an integer linear programming test is developed, which gives a sufficient but not necessary condition.
To resolve this uncertainty we develop a (convex) quadratic integer program refining the linear one. The quadratic integer program gives a sufficient \emph{and} necessary condition. Yet it is fast to compute (see §\ref{sec:positivity}).\\  

Let us now consider an algebraically closed field $\kappa$ of positive characteristic \mbox{$p=\cha \kappa>0$}, and let $X / \kappa$ be a supersingular K3 surface. 
Then $\NS(X)$ is an even lattice of signature $(1,21)$ and determinant $-p^{2\sigma}$ for some $1\leq \sigma \leq 10$ (the so-called {\em Artin invariant}).
As before, $f$ preserves the ample cone of $\NS(X) \otimes \mathbb{R}$ cut out by the nodal roots, as well as some extra structure (a {\em crystal}) represented by an eigenvector of $\overline{f}|A_{\NS} \otimes \kappa$.
It is proved for $p>3$ that this datum determines $(X,F)$ and any (good) datum is realized (this is more or less the content of Ogus' {\em Crystalline Torelli theorem}, see §\ref{sec:torelli}).

Thus, in our construction we have to replace $H^2(X,\ZZ)$ by $\NS(X)$ and the Torelli theorem gets a new flavor. The characteristic polynomial of $f|\NS(X)$ still factors as
\[\chi(f|\NS)=s(x) c(x)\]
where $c(x)$ is a product of cyclotomic polynomials, and the Salem and cyclotomic factors can be analogously defined as
\[S:=\ker s(f|\NS) \quad \text{and} \quad C:=\ker c(f|\NS).\]
Notice that the signature of $S$ is still $(1,d-1)$ but now that of $C$ is $(0,22-d)$.
Again $S \oplus C$ is of finite index in $\NS(X)$, but since the latter is not unimodular, there is only a partial gluing between certain subgroups (see § \ref{sec:glue})
 \[A_{S} \supseteq H_S \xrightarrow{\phi} H_C \subseteq A_{C}.\]
One can show that $pA_S \subseteq H_S$, so in this case $s(x)$ and $c(x)$ have a common factor modulo any prime dividing $|pA_S|$. In particular we take a look again at the feasible primes in Section §\ref{sec:realizability}. 

Checking whether $f$ preserves the ample cone of $\NS(X)$ is done exactly as in the complex case. The only difference is that there the failure of necessity of the linear positivity test is less severe, since often one can try a construction with a different $\NS(X)$ and hope for a positive result there. However in the supersingular case we have less freedom on $\NS(X)$ once deciding for a fixed characteristic $p$. It was for this reason that we developed the quadratic positivity test described in Theorem \ref{thm:quadratic_test}.

\subsection*{Notation}
For an even $d > 0$, $\lambda_d$ denotes the minimal Salem number of degree $d$, and $s_d(x)$ the corresponding minimal polynomial. 
Also for any integral $k > 0$, $c_k(x)$ denotes the $k$-th cyclotomic polynomial.

\subsection*{Acknowledgements}
We would like to thank Christian Lehn, S\l awomir Rams and Matthias Sch\"utt for the many stimulating discussions about this topic, and specially mention Daniel Loughran for his suggestion about the $p$-adic logarithm.

The financial support of the starting grant ERC StG 279723 ``Arithmetic of algebraic surfaces'' (SURFARI), the research training group GRK 1463 ''Analysis, Geometry and String Theory'' and the project MTM2015-69135-P of the spanish ``Ministerio de Econom\'ia y Competitividad'' is gratefully acknowledged.






\section{Lattices}
Recall that a {\em lattice} is a finitely generated free abelian group $L$ together with a non-degenerate symmetric bilinear form
$$\langle-,-\rangle: L \otimes L \longrightarrow \mathbb{Z}.$$
The signature of $L$ is the pair $\left(n_+,n_-\right)$, where $n_+$ (resp. $n_-$) is the number of positive (resp. negative) eigenvalues of the $\mathbb{R}$-bilinear extension of $\langle-,-\rangle$. A lattice is {\em hyperbolic} if $n_+ = 1$, and negative-definite (resp. positive-definite) if $n_+=0$ (resp. $n_-=0$). A lattice is called {\em even} if $\left\langle x,x\right\rangle \in 2\mathbb{Z}$ for any $x \in L$, otherwise it is called {\em odd}. The {\em orthogonal group} of $L$ is the group of {\em isometries} of $L$, that is,
$$O(L) = \left\{f: L \rightarrow L \,|\, \left\langle f(x),f(y)\right\rangle = \left\langle x,y\right\rangle \, \forall x,y \in L\right\} \subseteq GL(L).$$
As a matter of notation, if $L_1$ and $L_2$ are two lattices, the direct sum $L_1 \oplus L_2$ is meant to be the {\em orthogonal} direct sum, unless any other bilinear form is specified.\\

The non-degeneracy of the bilinear form implies that the natural map $L \rightarrow L^{\vee} = \Hom(L,\mathbb{Z})$ defined by $x \mapsto \langle x,-\rangle$ is injective, and identifies $L^{\vee}$ with the group
$$\left\{y \in L \otimes_{\mathbb{Z}} \mathbb{Q} \,|\, \langle x,y\rangle \in \mathbb{Z} \quad \forall \, x \in L\right\}.$$
The {\em discriminant group} of $L$ is defined as $A_L=L^{\vee}/L$, and naturally inherits a symmetric bilinear form
$$b_L: A_L \otimes A_L \longrightarrow \mathbb{Q}/\mathbb{Z}.$$
In case $L$ is even, there is a natural quadratic form (the {\em discriminant form}):
\[q_L: A_L \rightarrow \mathbb{Q}/2 \ZZ .\]
We say that a bilinear or quadratic form is totally isotropic on some subspace if it vanishes identically on this subspace. 
The {\em determinant} of $L$, denoted $\det(L)$, is the determinant of the Gram matrix of $\langle-,-\rangle$ with respect to any basis of $L$, and coincides up to sign with the order of the discriminant group $A_L$. More precisely
$$\det(L) = \left(-1\right)^{n_-} \left|A_L\right|.$$

\begin{definition} \label{def:ssK3lattice}
A {\em supersingular K3 lattice} is an even lattice $N$ of rank 22, signature $\left(1,21\right)$ and such that the discriminant group $A_N\cong \FF_p^{2\sigma}$, $p>2$, $\sigma \in \{1, \dots ,10\}$.
\end{definition} 

A lattice $L$ such that $A_L$ is annihilated by $p$ is called {\em $p$-elementary}.
Indefinite $p$-elementary lattices ($p\neq 2$) of rank at least $3$ are determined up to isometry by their signature and determinant. In particular supersingular K3 lattices are determined by their determinant.
To get uniqueness for $p=2$ one needs to introduce an extra invariant, namely the parity of $q_L$ \cite[Sec. 1]{rudakov_shafarevic:k3_finite_fields}.


\section{Torelli theorems for supersingular K3 surfaces} \label{sec:torelli}

In this section we recall the basic facts about supersingular K3 surfaces that are used all along the paper. In particular we introduce some versions of the Torelli theorems proved by Ogus in \cite{ogus:83}. Though crystalline cohomology plays a central role in the development and proof of these results (and even in some statements), we avoid it in order to lighten the exposition, using only the N\'eron-Severi lattice. The interested reader is referred to \cite{liedtke:lectures_supersingular,ogus:79,ogus:83} for the details.\\

Let $X$ be a K3 surface defined over an algebraically closed field $\kappa$ of characteristic $p>2$. Recall that $X$ is said to be {\em (Shioda) supersingular} if
$$\rho(X) = \rk \NS(X) = 22.$$

\begin{remark} \label{rmk:Artin-vs-Shioda}
Artin introduced in \cite{artin:supersingular} a different notion of supersingularity. Namely, a K3 surface $X$ is (Artin) supersingular if its Brauer group has infinite height, or equivalently, if the second crystalline cohomology is purely of slope 1. Due to the Igusa-Artin-Mazur inequality for varieties of finite height \cite{artin-mazur:formal_groups}, any Shioda supersingular K3 is also Artin supersingular. The converse follows from the Tate conjecture (even if the surface is not defined over a finite field, see for example \cite[Theorem 4.8]{liedtke:lectures_supersingular}). The Tate conjecture is known for K3 surfaces defined over finite fields of odd characteristic \cite{nygaard:tate,nygaard-ogus:tate,charles:tate,madapusi-pera:tate,maulik:supersingular} and has recently been announced also for $p=2$ \cite{kim-madapusi-pera:tate_2}. Therefore both definitions of supersingularity are equivalent, and from now on we thus refer to any such K3 surface simply as ``supersingular''.
\end{remark}

As said above, the N\'eron-Severi lattice $\NS(X)$ of a supersingular K3 surface $X$ is a supersingular K3 lattice. In particular, the determinant of $\NS(X)$ is of the form $-p^{2\sigma}$ for some integer $1 \leq \sigma \leq 10$, which is called the {\em Artin invariant} of $X$. Furthermore, the discriminant group is $A_{\NS(X)} \cong \mathbb{F}_p^{2\sigma}$. Moreover, the induced bilinear form on $A_{\NS(X)}$ takes values in
$$\left(\frac{1}{p}\mathbb{Z}\right)/\mathbb{Z} \cong \mathbb{F}_p$$
and is {\em non-neutral}, that is, there is no totally isotropic subspace $K \subset A_{\NS(X)}$ of dimension $\sigma = \frac{1}{2} \dim_{\mathbb{F}_p}A_{\NS(X)}$. To see this, note that neutrality would imply the existence of an even, unimodular overlattice of signature $(1,21)$. It is well known that such a lattice does not exist. \\

The aim of a Torelli-type theorem (in characteristic 0) is to characterize a variety $X$ by (part of) its Hodge structure, and maybe some extra combinatorial data. For example, the Torelli theorem for complex K3 surfaces $X$ says that $X$ is determined (up to isomorphism) by the Hodge decomposition $H^2(X,\mathbb{Z}) \otimes \CC = H^{2,0} \oplus H^{1,1} \oplus H^{0,2}$. If furthermore an ample cone in $H^{1,1}_{\mathbb{R}}$ is given (or equivalently, a chamber of effective classes), then $X$ is determined up to {\em unique} isomorphism. We now present Ogus' crystalline Torelli theorem(s) for supersingular K3 surfaces in the form most useful to us.\\
 
A positive-characteristic analogue of a Hodge structure is a {\em crystal}, associated to the crystalline cohomology groups. On a supersingular K3 surface $X$ such a crystal is determined by $\tilde{P}_X$, the kernel  of the first de Rham-Chern class map
\[c_{dR}^1\otimes \kappa:\NS(X) \otimes \kappa \rightarrow H^2_{dR}(X,\kappa).\]
Since $\kappa$ has characteristic $p$, we have
$$\NS(X) \otimes \kappa \cong \left(\NS(X) \! /p\NS(X)\right) \otimes \kappa,$$
and indeed $\tilde{P}_X$ is contained in the subspace $\left(p\NS(X)^\vee \! \!/p\NS(X) \right) \otimes \kappa$ which is clearly isomorphic to
$$\left(\NS(X)^\vee \! \! /\NS(X)\right) \otimes \kappa = A_{\NS(X)} \otimes \kappa.$$
Furthermore $\tilde{P}_X\subseteq A_{\NS(X)}\otimes \kappa$ is a ``strictly characteristic subspace'', which in general is defined as follows:

\begin{definition}\cite[Definition 3.19]{ogus:79} \label{def_charact_subspace}
Let $A$ be a $2\sigma$-dimensional $\mathbb{F}_p$-vector space equipped with a non-degenerate, non-neutral, symmetric bilinear form. Let $\Fr_{\kappa}: \kappa \rightarrow \kappa$ be the Frobenius automorphism of $\kappa$, and set
$$\psi=\id_A \otimes \Fr_{\kappa}: A \otimes \kappa \longrightarrow A \otimes \kappa.$$
A subspace $P\subseteq A \otimes \kappa$ is a \textit{characteristic} subspace if
 \begin{itemize}
    \item[(1)] $\dim_{\kappa} P=\sigma$;
    \item[(2)] $\dim_{\kappa}\left(P+\psi(P)\right)=\sigma+1$;
    \item[(3)] $P$ is totally isotropic.
  \end{itemize}
  If moreover 
  \[ \sum_{i\geq 0} \psi^i(P)=A\otimes \kappa,\]
  then $P$ is called \textit{strictly} characteristic.
\end{definition}
Note that $P$ is (strictly) characteristic if and only if  $\psi^{-1}(P)$ is. The {\em period} $P_X$ of a supersingular K3 surface $X$ is defined as
$$P_X = \psi^{-1}(\ker c_{dR}^1\otimes \kappa) = \psi^{-1}(\tilde{P}_X)$$
(see \cite{ogus:83}), and is a strictly characteristic subspace of $A_{\NS(X)}\otimes \kappa$. The following lemma follows easily from Definition \ref{def_charact_subspace}.

\begin{lemma} \label{lem_characteristic_line}
If $P \subset A \otimes \kappa$ is a strictly characteristic subspace and $\dim_{\mathbb{F}_p}A = 2\sigma$, then
$$l=P \cap \psi (P) \cap \cdots \cap \psi^{\sigma-1}(P)$$
is a line. Furthermore $P$ can be recovered as $P = l + \psi^{-1}(l) + \cdots + \psi^{-\left(\sigma-1\right)}l)$ and hence $l + \psi(l) + \cdots + \psi^{2\sigma-1}(l) = A\otimes \kappa$.
\end{lemma}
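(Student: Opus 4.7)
My plan is to systematically exploit that each $\psi^i(P)$ is again totally isotropic of dimension $\sigma$ (because $\psi$ preserves the bilinear form up to a Frobenius twist), so $\psi^i(P) = \psi^i(P)^\perp$ inside the non-degenerate $2\sigma$-dimensional space $A\otimes\kappa$. Via $(U+V)^\perp = U^\perp \cap V^\perp$, this lets me interchange sums and intersections of the $\psi^i(P)$: setting $V_j := \sum_{i=0}^{j}\psi^i(P)$ and $W_j := \bigcap_{i=0}^{j}\psi^i(P)$, one has $\dim V_j + \dim W_j = 2\sigma$.

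I would first show $\dim V_j = \sigma + j$ for $0 \leq j \leq \sigma$. Condition (2) of Definition \ref{def_charact_subspace} applied to $\psi^j(P)$ gives $\dim(\psi^j(P) + \psi^{j+1}(P)) = \sigma + 1$, so $V_{j+1}$ exceeds $V_j$ by at most one dimension. If the jump were zero at some $j < \sigma$, then $\psi(V_j) \subseteq V_{j+1} = V_j$ would make $V_j$ a $\psi$-stable $\kappa$-subspace, so iterating gives $\psi^i(P) \subseteq V_j$ for all $i \geq 0$, and strict characteristicity would force $V_j = A\otimes\kappa$, contradicting $\dim V_j < 2\sigma$. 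Thus every jump is exactly one, and dually $\dim W_j = \sigma - j$; in particular $l = W_{\sigma-1}$ is a line. Applying the same argument to the (still strictly characteristic) subspace $\psi^{-1}(P)$ yields $\sum_{m=0}^{\sigma}\psi^{-m}(P) = A\otimes\kappa$, whence $\bigcap_{m\geq 0}\psi^{-m}(P) = 0$ by $\perp$-duality.

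To recover $P$, the inclusions $l \subseteq \psi^i(P)$ for $0\leq i \leq \sigma-1$ yield $\psi^{-i}(l) \subseteq P$, so $L := \sum_{i=0}^{\sigma-1}\psi^{-i}(l) \subseteq P$. I would prove $L = P$ by re-running the dichotomy on the chain $L_j := \sum_{i=0}^{j}\psi^{-i}(l) \subseteq P$: if a jump is zero at some $j \leq \sigma - 2$, then $L_j$ is $\psi^{-1}$-stable (and, being finite dimensional over $\kappa$, also $\psi$-stable), so $L_j \subseteq \bigcap_{m\geq 0}\psi^{-m}(P) = 0$, contradicting $0 \neq l \subseteq L_j$. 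Hence $\dim L = \sigma$ and $P = L$. Finally, applying $\psi^i$ to this identity for $i = 0, 1, \ldots, \sigma$ and summing gives $V_\sigma = A\otimes\kappa$ on the left and a subspace of $\sum_{k=-(\sigma-1)}^{\sigma}\psi^k(l)$ on the right; applying $\psi^{\sigma-1}$ shifts the indices and yields $l + \psi(l) + \cdots + \psi^{2\sigma-1}(l) = A\otimes\kappa$. The main subtlety throughout is the $\kappa$-semi-linearity of $\psi$: it is what makes the ``$\psi$-stable implies stable under all iterates'' step legitimate, while still allowing $\perp$ to commute with the $\psi^i$ because Frobenius $p$-th powering preserves the vanishing of form values.
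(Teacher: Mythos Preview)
Your proof is correct. The paper does not actually give a proof of this lemma, stating only that it ``follows easily from Definition \ref{def_charact_subspace}''; your argument is precisely the natural unfolding of this, exploiting that each $\psi^i(P)$ is maximal totally isotropic so that $\perp$ interchanges the chains $V_j$ and $W_j$. One minor stylistic point: the step where you obtain $\bigcap_{m\geq 0}\psi^{-m}(P)=0$ is more direct than you phrase it --- having already shown $W_\sigma = \bigcap_{i=0}^{\sigma} \psi^i(P) = 0$, applying the bijection $\psi^{-\sigma}$ immediately gives $\bigcap_{m=0}^{\sigma} \psi^{-m}(P)=0$, so no separate appeal to the strict characteristicity of $\psi^{-1}(P)$ is needed.
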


The next Theorem shows that every strictly characteristic subspace occurs as the period of some K3 surface. For this we fix the following notation: if $f: N \rightarrow M$ is an isometry of lattices, we denote by $\overline{f}: A_N \rightarrow A_M$ the induced group isomorphim (or its $\kappa$-linear extension).

\begin{theorem}[Surjectivity of the period map \cite{ogus:83}] \label{Tor1}
Given any supersingular K3 lattice $N$ and a strictly characteristic subspace $P \subset A_N\otimes \kappa$, then there is a K3 surface $X$ and an isometry $N \stackrel{\iota}{\cong} \NS(X)$, such that $\overline{\iota}(P) = P_X$.
\end{theorem}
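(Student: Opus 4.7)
The proof proposal is essentially Ogus' strategy via a global period map, so the plan is to build a moduli space of \emph{marked} supersingular K3 surfaces, a moduli space of strictly characteristic subspaces, compare them via the period map, and conclude surjectivity from étaleness plus connectedness.

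First, fix a supersingular K3 lattice $N$ and define the period space
\[M_N = \bigl\{P \subset A_N \otimes \kappa \,\big|\, P \text{ is a strictly characteristic subspace}\bigr\}.\]
The plan is to show that $M_N$ is naturally a locally closed subvariety of a Grassmannian of $\sigma$-dimensional subspaces of $A_N\otimes \kappa$, that it is smooth, and that it is irreducible of dimension $\sigma-1$. The first two properties follow from Lemma \ref{lem_characteristic_line} and the infinitesimal analysis of the conditions $(1)$–$(3)$ in Definition \ref{def_charact_subspace} together with the strictness condition; for irreducibility one uses the description of strictly characteristic subspaces in terms of the line $l = P\cap \psi(P)\cap\cdots\cap \psi^{\sigma-1}(P)$ and the explicit rational parametrization that produces $P$ from $l$.

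Next, introduce the moduli functor $\mathcal S_N$ of $N$-marked supersingular K3 surfaces, whose $\kappa$-points are pairs $(X,\iota)$ with $\iota: N \xrightarrow{\sim} \NS(X)$, and send $(X,\iota)$ to $\overline{\iota}^{-1}(P_X)\in M_N$. This defines the period morphism $\pi:\mathcal S_N \to M_N$. The central input, coming from Grothendieck–Messing-style crystalline deformation theory applied to the F-crystal $H^2_{cris}(X/W)$, is that $\pi$ is formally étale: a deformation of $X$ preserving the marking corresponds exactly to a deformation of the period inside $A_N\otimes \kappa$. Using Artin's representability result, $\mathcal S_N$ is a smooth algebraic space of dimension $\sigma-1$, and $\pi$ is étale and of relative dimension $0$.

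Finally, one needs a single point in each component of the image to conclude: this is where the bulk of the work lies. The plan is to produce at least one $N$-marked supersingular K3 surface $(X_0,\iota_0)$ with a known period, e.g.\ a Kummer surface $\mathrm{Km}(E\times E)$ of a product of supersingular elliptic curves (which is known to exist for every $p>2$ and has Artin invariant $\sigma=1$ or $2$), and then use specialization/deformation to populate higher Artin invariants. Once one point of $M_N$ is hit, étaleness of $\pi$ combined with the irreducibility of $M_N$ forces $\pi$ to be surjective on $\kappa$-points, which is exactly the claim: every strictly characteristic $P$ arises as $\overline{\iota}(P_X) = P_X$ under a suitable marking. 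The hard part, and the reason this theorem rests on the full apparatus of \cite{ogus:83}, is precisely the combination of (i) controlling the period map on the crystalline side through Grothendieck–Messing, and (ii) the global irreducibility of $M_N$; once these are in place, étaleness plus the existence of one starting point is purely formal.
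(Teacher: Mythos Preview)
The paper does not prove this theorem; it is stated with a citation to \cite{ogus:83} and used as a black box. So there is no ``paper's own proof'' to compare against, and your sketch is really an outline of Ogus' original argument rather than a reproof of something the present paper establishes.

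That said, your outline has a genuine gap. You write that ``\'etaleness of $\pi$ combined with the irreducibility of $M_N$ forces $\pi$ to be surjective on $\kappa$-points,'' but this is false as stated: an \'etale morphism has open image, so with an irreducible target you get a dense image, not a surjective one (think of $\mathbb{A}^1\setminus\{0\}\hookrightarrow\mathbb{A}^1$). To conclude surjectivity you need the image to also be closed, and this is where Ogus does substantial additional work. In \cite{ogus:83} the argument passes through the moduli space of $N$-\emph{marked} K3 surfaces where the marking is only a finite-index embedding $N\hookrightarrow\NS(X)$ (so the Artin invariant is allowed to drop under specialization; this is exactly the point of the Remark following the strong Torelli theorem in the paper), and one proves a properness/completeness statement for this larger moduli space over the full period domain. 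Your version, which fixes $\iota$ to be an isometry and keeps $\sigma$ constant, cannot be proper, and the ``specialization/deformation to populate higher Artin invariants'' step is too vague to fill the gap (in fact the direction is the opposite: specialization \emph{lowers} $\sigma$). The missing idea is precisely this properness over the compactified period space, and without it the \'etale argument does not close.
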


In order to formulate a strong Torelli theorem, we need to consider the chamber structure of the positive cone in $\NS(X) \otimes \mathbb{R}$, which is analogous to that in characteristic 0. Although right now only hyperbolic lattices are needed, we recall also the chamber structure for negative-definite lattices, which will play an important role in the subsequent sections. If $L$ is an even lattice, we denote by
$$\Delta_L = \left\{\delta \in L \,|\, \delta^2=\left\langle\delta,\delta\right\rangle = -2\right\}$$
the set of {\em roots} of $L$, which is finite if $L$ is negative-definite. If $L$ is hyperbolic we set
$$V_L = \left\{x \in L\otimes\mathbb{R} \,|\, x^2 > 0 \, \text{ and } \, \left(\delta,x\right) \neq 0 \quad \forall \delta \in \Delta_L\right\},$$
which according to \cite[Proposition 1.10]{ogus:83} is an open set and each of its connected components meets $L \subset L \otimes \mathbb{R}$. These assertions still hold for negative-definite $L$ if we define
$$V_L = \left\{x \in L\otimes\mathbb{R} \,|\, x \neq 0 \, \text{ and } \, \left(\delta,x\right) \neq 0 \quad \forall \delta \in \Delta_L\right\}.$$
In both cases, the connected components of $V_L$ are called {\em chambers} of $V_L$. \\

If $L = \NS(X)$ for a supersingular K3 surface $X$, then there is exactly one chamber $\alpha_X$, the ample cone, such that a line bundle $H$ is ample if and only if $\left[H\right] \in \alpha_X$. It turns out that, together with a strictly characteristic subspace $P$, the choice of a chamber $\alpha$ in $V_L$ determines a K3 surface with ample cone $\alpha$ {\em up to unique isomorphism}. Indeed, this is a consequence of the following

\begin{theorem}\cite[Theorem II' and Theorem II'']{ogus:83}
 Let $\kappa=\overline{\kappa}$ be a field of characteristic $p>3$ and $X,Y$ supersingular K3 surfaces over $\kappa$.
 If $f:\NS(X) \rightarrow \NS(Y)$ is an isometry, then there is a unique isomorphism $F:Y \rightarrow X$ with
 $f=F^*$ provided that
 \begin{itemize}
  \item[(1)] $f(\alpha_X)=\alpha_Y$ and
  \item[(2)] $\overline{f}(P_X)=P_Y$.
 \end{itemize}
\end{theorem}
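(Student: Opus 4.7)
The plan is to reduce the statement to Ogus' full crystalline Torelli theorem (\cite[Thm.~II', II'']{ogus:83}), which is phrased in terms of the $K3$ crystal $H(X) = H^2_{cris}(X/W(\kappa))$, by reconstructing the crystal from the pair $(\NS(X), P_X)$.

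First I would spell out how the $K3$ crystal is recovered from its Néron--Severi lattice and period. The crystalline first Chern class gives an embedding $\NS(X) \otimes W \hookrightarrow H(X)$ which becomes an isomorphism after inverting $p$, so $H(X)$ sits as a $W$-sublattice of $\NS(X)^\vee \otimes W$ containing $p \cdot \NS(X)^\vee \otimes W$, and is determined by its reduction modulo $p$. From the definition $P_X = \psi^{-1}(\ker c_{dR}^1 \otimes \kappa)$ one reads off that this reduction, together with the Frobenius on $H(X) \otimes \kappa$, is encoded exactly by $P_X$. Consequently $(\NS(X), P_X)$ determines $H(X)$ as an abstract $K3$ crystal up to canonical isomorphism; Theorem~\ref{Tor1} is the surjectivity counterpart of this reconstruction.

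With the reconstruction in hand, conditions (1) and (2) translate cleanly: an isometry $f:\NS(X) \to \NS(Y)$ with $\overline{f}(P_X) = P_Y$ lifts uniquely to an isomorphism $\widetilde{f}:H(Y) \to H(X)$ of $K3$ crystals whose restriction to $\NS$ is $f$, and which sends $\alpha_X$ to $\alpha_Y$ by (1). Ogus' strong crystalline Torelli theorem then delivers the unique $F:Y \to X$ with $F^* = \widetilde{f}$, hence $F^*|_{\NS(X)} = f$. The hypothesis $p > 3$ is inherited from Ogus.

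The main obstacle is the crystal-reconstruction step: one must check that the strictly characteristic subspace $P_X \subseteq A_{\NS(X)} \otimes \kappa$ really is enough to pin down the $W$-lattice $H(X)$ together with its Frobenius structure, and not merely its reduction modulo $p$. The strictness is used essentially here: by Lemma~\ref{lem_characteristic_line} the iterates $\psi^i(P_X)$ generate $A_{\NS(X)} \otimes \kappa$, which rigidifies the Frobenius on $H(X) \otimes \kappa$; the Dieudonné-theoretic rigidity of supersingular $K3$ crystals (\cite[\S 3]{ogus:79}) then provides the unique integral lift, and uniqueness of $F$ descends from the uniqueness clause in Ogus' theorem.
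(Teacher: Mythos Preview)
The paper does not supply its own proof of this theorem: it is quoted directly from Ogus \cite[Thm.~II', II'']{ogus:83} and used as a black box. Your reduction argument --- reconstructing the $K3$ crystal from the pair $(\NS(X),P_X)$ and then invoking Ogus' original crystal-level statement --- is precisely the content of the paragraphs the paper places \emph{before} the theorem, where it is asserted (without detailed justification) that ``such a crystal is determined by $\tilde{P}_X$'' and the period is defined as $P_X=\psi^{-1}(\tilde{P}_X)$. So your approach matches the paper's implicit reasoning, only made more explicit; there is nothing to compare against in the way of an alternative proof.
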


\begin{remark}
The original statements of Ogus involve $N$-marked K3 surfaces, that is, pairs $(X,\eta)$ where $\eta: N \hookrightarrow \NS(X)$ is a finite-index inclusion of a supersingular K3 lattice. This allows to consider families of surfaces with varying Artin invariant $\sigma$, which can very often happen. Indeed, it is a crucial property used in the proofs. All the definitions we have introduced above (characteristic subspaces, ample chambers, ...) carry over to this context with mild modifications. However, since we do not need this approach in our article, we have decided to avoid it for the sake of simplicity.
\end{remark}

Our main application of these results is the following immediate Corollary:

\begin{corollary} \label{cor:Strong_Torelli}
Let $\kappa=\overline{\kappa}$, $\cha \kappa>3$, $N$ a supersingular K3 lattice and $P \subset A_N \otimes \kappa$ a strictly characteristic subspace. If $f \in O(N)$ preserves a connected component of $V_N$ and $\overline{f}(P)=P$, then there is a supersingular K3 surface $X$ and an automorphism $F: X \rightarrow X$ such that $N \stackrel{\iota}{\cong} \NS(X)$, $\overline{\iota}(P)=P_X$ and $f = \iota^{-1} \circ F^* \circ \iota$.
\end{corollary}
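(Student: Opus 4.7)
The plan is to combine the surjectivity theorem (Theorem \ref{Tor1}) with Ogus' Strong Torelli theorem in a straightforward way, with one subtle adjustment: after producing a K3 surface from the given period datum, we may need to move the preferred chamber by Weyl-group reflections so that it becomes the ample cone.

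First, I would apply Theorem \ref{Tor1} directly to $(N,P)$ to obtain a supersingular K3 surface $X$ together with an isometry $\iota_0: N \xrightarrow{\cong} \NS(X)$ satisfying $\overline{\iota_0}(P) = P_X$. Let $C \subset V_N$ denote the connected component preserved by $f$. Then $\iota_0(C)$ is some chamber of $V_{\NS(X)}$, but a priori there is no reason for it to coincide with the ample cone $\alpha_X$, so we cannot yet invoke Theorem 3.4.

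To remedy this, I would use that the Weyl group $W$ generated by reflections in the roots $\delta \in \Delta_{\NS(X)}$ acts transitively on the chambers of $V_{\NS(X)}$. Choose $w \in W$ with $w(\iota_0(C)) = \alpha_X$ and set $\iota = w \circ \iota_0$. The key observation is that each reflection $s_\delta$, with $\delta \in \NS(X)$, acts trivially on the discriminant group: for any $y \in \NS(X)^\vee$ one has $s_\delta(y) - y = \langle y,\delta\rangle\,\delta \in \NS(X)$ since $\langle y,\delta\rangle \in \ZZ$. Consequently $\overline{w} = \id$ on $A_{\NS(X)} \otimes \kappa$, so $\overline{\iota}(P) = \overline{w}(P_X) = P_X$, while by construction $\iota(C) = \alpha_X$.

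Now set $g = \iota \circ f \circ \iota^{-1} \in O(\NS(X))$. By construction $g$ preserves $\alpha_X$ and $\overline{g}$ preserves $P_X$, so Theorem 3.4 (applied with $Y = X$) produces a unique automorphism $F: X \to X$ with $F^* = g$, equivalently $f = \iota^{-1} \circ F^* \circ \iota$, which is the desired conclusion. The only real point to verify carefully is the Weyl-group adjustment step, i.e., that reflections by roots in $\NS(X)$ act trivially on the discriminant group (hence on the period), and that $W$ acts transitively on the chambers of $V_{\NS(X)}$; the former is the short computation above and the latter is a standard fact about hyperbolic lattices already used implicitly in \cite{ogus:83}.
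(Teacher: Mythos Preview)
Your argument is essentially correct and supplies the details that the paper omits (the paper calls this an ``immediate Corollary'' and gives no proof). There is one small gap worth noting. The Weyl group $W$ generated by reflections in roots of $\NS(X)$ preserves each of the two connected components of the light cone $\{x:x^2>0\}$, since for a root $\delta$ the hyperplane $\delta^\perp$ meets both components and $s_\delta$ fixes it pointwise. Hence $W$ acts transitively only on the chambers contained in a fixed component of the light cone, not on all chambers of $V_{\NS(X)}$. So if $\iota_0(C)$ happens to lie in the component opposite to the one containing $\alpha_X$, no element of $W$ will carry it to $\alpha_X$.

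The fix is immediate: in that case replace $\iota_0$ by $-\iota_0$. This interchanges the two components of the light cone, and since $P_X$ is a $\kappa$-\emph{subspace} of $A_{\NS(X)}\otimes\kappa$ one has $(-\overline{\iota_0})(P)=-P_X=P_X$, so the period condition is unaffected. After this adjustment your Weyl-group step goes through verbatim, and the rest of the argument (triviality of root reflections on $A_{\NS(X)}$ and the application of Ogus' theorem with $Y=X$) is correct as written.
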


Explicitly checking whether an isometry $f \in O(N)$ preserves some chamber of $V_N$ is not an easy task, and this is addressed in the next section.


\section{Positivity} \label{sec:positivity}

Recall from the previous section that, given an isometry $f$ of a supersingular K3 lattice $N$, we are interested in knowing whether $f$ preserves some connected component of $V_N$. To this end it is useful to consider more general lattices than only supersingular K3 lattices.\\

In what follows, let $L$ denote an even lattice which is either hyperbolic or negative-definite. Most of the definitions and several results in this section are due to McMullen \cite{mcmullen:minimum}.

\begin{definition}[Positive automorphism]
We say that an isometry $f\in O(L)$ is {\em positive} if it preserves some connected component of $V_L$.
\end{definition}

If $M \subset L$ is a sublattice of finite index, it can happen that $f|M$ is positive while $f$ is not (for example, if $M$ contains less roots than $L$, and hence then the chambers of $V_M$ are closures of unions of chambers of $V_L$). In order to emphasize this dependence on the lattice, sometimes we will say that the pair $(L,f)$ is positive.

If $L$ is hyperbolic, the {\em light cone} $\left\{x \in L\otimes \mathbb{R} \,|\, x^2>0\right\}$ has two connected components, and any positive isometry $f \in O(L)$ does not interchange them. We denote by $O^+(L)$ the subgroup of isometries with this property. If $L$ is negative-definite, then set $O^+(L) = O(L)$.\\

Since the chamber structure of $V_L$ is given by the roots of $L$, the positivity of $f$ is naturally related to its action on $\Delta_L$, and indeed there are two special kinds of roots.

\begin{definition}\cite[Obstructing and cyclic roots]{mcmullen:minimum}
Let $\delta \in \Delta_L$ be a root of $L$, and $f \in O^+(L)$ an isometry.
\begin{itemize}
\item $\delta$ is {\em obstructing} for $f$ if there is no linear form $\phi \in \Hom\left(L,\mathbb{R}\right)$ such that the bilinear form on $\ker\phi \subset L\otimes \mathbb{R}$ is negative definite and $\phi(f^i(\delta)) > 0$ for all $i\in\mathbb{Z}$.
\item $\delta$ is {\em cyclic} for $f$ if $\delta +f(\delta)+f^2(\delta)+\cdots+f^i(\delta)=0$ for some $i>0$.
\end{itemize}
\end{definition}

Obviously, any cyclic root is also obstructing. Conversely if $L$ is negative definite, all obstructing roots are cyclic.

\begin{remark}
To motivate the definition of obstructing roots, suppose that $L$ is the N\'eron-severi lattice of some projective K3 surface $X$, $f$ is induced by some automorphism $F: X \rightarrow X$, and let $h \in L$ be the class of an ample line bundle. If $\delta \in \Delta_L$ is a root, a standard computation using Riemann-Roch shows that either $\delta$ or $-\delta$ is effective. In the first case, also $f^i(\delta)$ is effective for every $i > 0$, and hence $\left\langle h,f^i(\delta)\right\rangle > 0$ for every $i > 0$. Thus, the linear form $\phi=\left\langle h,-\right\rangle$ shows that $\delta$ cannot be obstructing (the negative-definiteness on $\ker\phi$ follows from the Hodge-index theorem). In case $-\delta$ is effective, then $\phi=-\left\langle h,-\right\rangle$ leads to the same conclusion. Therefore, an obstructing root is indeed an obstruction to the existence of an ample line bundle on $X$.
\end{remark}

It was proved by McMullen \cite[Theorem 2.2]{mcmullen:minimum} that an isometry $f \in O^+(L)$ is positive if and only if it has no obstructing roots. In the same work \cite[Section 3]{mcmullen:minimum}, McMullen developed a method to detect obstructing roots that can be summarized as follows. Denote by $\tilde{c}(x)$ the part of the characteristic polynomial $c(x)$ of $f$ which is coprime to $(x-1)$. First one looks at cyclic roots, which by definition lie in the kernel of $\tilde{c}(f)$ which is negative-definite, and thus its roots are easily computed.

We can therefore assume that $L$ is hyperbolic and $f$ has no cyclic root, for otherwise it is not positive. Let $a = f + f^{-1}$ and $A = \mathbb{R}\left[a\right] \subset \End_{\mathbb{R}}\left(L \otimes \mathbb{R}\right)$. Given any $x \in L$, let $\psi_x: A \rightarrow \mathbb{R}$ be the {\em pure state} defined by $\psi_x(a)=\left\langle a(x),x\right\rangle$, and consider the lattice of {\em mixed states} $M \subset \Hom_{\mathbb{R}}\left(A,\mathbb{R}\right)$ spanned by $\left\{\psi_x \,|\, x \in L\right\}$. If $e_1,\ldots,e_n$ is a $\mathbb{Z}$-basis of $L$, $M$ turns out to be generated by the $\psi_{e_i}$ and $\psi_{e_i+e_j}$ \cite[Proposition 3.2]{mcmullen:minimum}. By construction, $a$ diagonalizes with real eigenvalues, which we denote by $\tau_1 > \tau_2 > \ldots > \tau_r$. Then we define $V_i =\ker\left(a-\tau_i\Id\right) \subset L\otimes\mathbb{R}$, obtaining an $f$-invariant orthogonal decomposition $L\otimes\mathbb{R} = \sum_{i=1}^r V_i$. Let $p_1,\ldots,p_r$ be the corresponding projections, so that $p_i^2 = p_i$, $a\circ p_i = \tau_ip_i$ and $\sum_{i=1}^r p_i = 1$.
 
With all these ingredients the following integer linear programming problem can be defined: let
\begin{equation} \label{eq:linear_prog_test}
\mu(f) = \max\left\{\psi(1) \,|\, \psi \in M, \psi(p_1) < 0 \, \text{ and } \, \psi(p_i) \leq 0 \quad \forall \, i>1\right\}.
\end{equation}
Note that by construction $\psi(1) \in 2 \mathbb{Z}$ for any $\psi \in M$, hence in any case $\mu(f) \leq -2$. If $\mu(f)<-2$ (and $f$ has no cyclic roots), McMullen proved that $f$ is positive \cite[Theorem 3.3, Linear positivity test]{mcmullen:minimum}. Note that the condition $\mu(f)<-2$ is only sufficient for the positivity of $f$, but not necessary. Indeed, there are examples of positive automorphisms with $\mu(f)=-2$ (see \cite{mcmullen:minimum}). The reason for this failure is that the maximizing $\psi$ is not necessarily a pure state $\psi_\delta$ for some $\delta \in L$. Instead, it might be a linear combination of pure states.

In order to have a necessary and sufficient condition, we have obtained the following result.

\begin{theorem}[Quadratic positivity test] \label{thm:quadratic_test}
Fix $y \in V_1$ with $y^2>0$ and for any $\psi \in M$ set
  \[B_\psi=\{x \in L \otimes \RR \mid \left\langle x,y\right\rangle\leq 0, \, \left\langle x,f(y)\right\rangle\geq 0, \text{ and } p_i(x)^2=\psi(p_i), \text{ for all } i\}.\]
Then $f$ is positive if and only if it has no cyclic roots and for every optimal solution $\psi$ of the linear positivity test (\ref{eq:linear_prog_test}) with $\psi(1)=-2$, the compact set $B_\psi$ contains no integral points, i.e. $B_\psi \cap L = \emptyset$.  
\end{theorem}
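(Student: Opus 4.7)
The plan is to derive both directions of the theorem from McMullen's characterization \cite[Theorem 2.2]{mcmullen:minimum}: $f$ is positive if and only if it has no obstructing roots. Cyclic roots are always obstructing, so the no-cyclic-roots hypothesis appears in both directions, and the content concerns \emph{non-cyclic} obstructing roots. In coordinates: the Salem eigenvalues $\lambda, 1/\lambda$ of $f$ produce null eigenvectors $e_\pm$ spanning $V_1$ as a Minkowski plane of signature $(1,1)$, while the other $V_i$ ($i>1$) are negative-definite with $f|V_i$ acting by a bounded rotation. Normalizing $\langle e_+, e_-\rangle = 1$ and writing $y = \alpha e_+ + \beta e_-$ with $\alpha\beta>0$ and $p_1(\delta) = s e_+ + t e_-$, one computes
\[\langle y, f^j\delta\rangle = \alpha t\,\lambda^{-j} + \beta s\,\lambda^j.\]
When $p_1(\delta)^2 = 2st < 0$ the coefficients $\alpha t, \beta s$ have opposite signs, so this expression is strictly monotone in $j$ with opposite-sign unbounded limits at $\pm\infty$; meanwhile for any $v\in L\otimes\RR$ the contribution of the $V_{i>1}$ to $\langle v, f^j\delta\rangle$ is uniformly bounded in $j$.

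\textbf{Necessity.} Suppose $f$ is positive and, for contradiction, that some optimal $\psi$ with $\psi(1)=-2$ admits $\delta\in B_\psi\cap L$. Then $\delta^2=\sum_i p_i(\delta)^2=\sum_i\psi(p_i)=-2$, so $\delta$ is a root with $\psi_\delta=\psi$; in particular $p_1(\delta)^2=\psi(p_1)<0$. For any $v$ with $v^2>0$, the decomposition $v^2=p_1(v)^2+\sum_{i>1}p_i(v)^2$ (with non-positive terms in the sum) forces $p_1(v)^2>0$; hence the $V_1$-asymptotics above dominate $\langle v, f^j\delta\rangle$ for $|j|\gg 0$, giving opposite signs as $j\to\pm\infty$. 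No linear form $\langle v,-\rangle$ with negative-definite kernel can therefore satisfy $\langle v, f^j\delta\rangle>0$ for every $j$, so $\delta$ is obstructing, contradicting the positivity of $f$.

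\textbf{Sufficiency.} Conversely, assume no cyclic roots and $B_\psi\cap L=\emptyset$ for every optimal $\psi$ with $\psi(1)=-2$. If $f$ were not positive, a non-cyclic obstructing root $\delta$ would exist, and I first show $p_1(\delta)^2<0$: if $p_1(\delta)\neq 0$ with $p_1(\delta)^2\geq 0$, then $p_1(\delta)$ lies in the closure of a timelike cone of $V_1$, which $f$ preserves, so pairings with $y$ have fixed sign along the orbit and $v=\pm y$ yields $\langle v, f^j\delta\rangle>0$ for every $j$, contradicting obstructingness; if $p_1(\delta)=0$, then $\delta\in V_1^\perp$ is a root in a negative-definite lattice, so its $f$-orbit is a finite set of roots and a standard orbit-sum argument forces $\delta$ to be cyclic, contradicting the hypothesis. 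Hence $\psi_\delta$ is feasible with $\psi_\delta(1)=-2$, i.e.\ optimal. To produce an element of $B_{\psi_\delta}\cap L$, I choose $\varepsilon\in\{\pm 1\}$ so that $j\mapsto\langle y, f^j(\varepsilon\delta)\rangle$ is strictly decreasing, pick $i\in\ZZ$ with $\langle y, f^{i-1}(\varepsilon\delta)\rangle\geq 0\geq\langle y, f^i(\varepsilon\delta)\rangle$, and set $\delta'=f^i(\varepsilon\delta)$. Then $\langle\delta',y\rangle\leq 0$, and since $f$ is an isometry, $\langle\delta',f(y)\rangle=\langle f^{-1}\delta',y\rangle=\langle y, f^{i-1}(\varepsilon\delta)\rangle\geq 0$, while $\psi_{\delta'}=\psi_\delta$ by $f$-invariance and sign-symmetry of pure states. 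Thus $\delta'\in B_{\psi_\delta}\cap L$, the desired contradiction.

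The main obstacle is the sufficiency direction, specifically producing a translate of a non-cyclic obstructing root in $B_{\psi_\delta}$. This crucially uses the strict monotonicity and sign change of $j\mapsto\langle y, f^j\delta\rangle$, which rest on the Minkowski-plane structure of $V_1$ and the inequality $p_1(\delta)^2<0$ forced by obstructing non-cyclicity; this is also what makes $B_\psi$ compact in the first place.
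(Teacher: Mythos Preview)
Your argument is correct and follows the same route as the paper: both reduce, via McMullen's characterization of positivity through obstructing roots, to showing that an optimal $\psi$ with $\psi(1)=-2$ is a pure state if and only if $B_\psi\cap L\neq\emptyset$, and both handle the nontrivial direction by translating $\pm\delta$ along the hyperbola in $V_1$ into the fundamental domain cut out by $y^\perp$ and $f(y)^\perp$. Your write-up is in fact more self-contained than the paper's, which defers several steps to ``the previous discussion''.

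Two small points. First, compactness of $B_\psi$ is part of the assertion and you only gesture at it; the paper gives the short argument via the product decomposition $B_\psi=\prod_i(V_i\cap B_\psi)$, with spheres for $i\geq2$ and a bounded hyperbola arc for $i=1$. Second, in the case $p_1(\delta)=0$ the phrase ``standard orbit-sum argument'' is misleading, since the full orbit sum need not vanish when $1$ is an eigenvalue of $f$; the clean justification is that $\delta$ then lies in the negative-definite sublattice $L\cap V_1^\perp$, obstructingness in $L$ is equivalent to obstructingness there (add a large multiple of $y$ to any separating $w\in V_1^\perp$), and in a negative-definite lattice every obstructing root is cyclic.
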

\begin{proof}
  By the previous discussion it is enough to show that an optimal solution $\psi$ with $\psi(1)=-2$ is a pure state if and only if $B_\psi\cap L\neq \emptyset$. 

  
  Suppose that $\psi$ is pure, i.e., $\psi=\psi_\delta$ for some $\delta \in L$.
  Observe that for any $i=1,\ldots,r$ it holds
  \[\psi_\delta(p_i)=\langle p_i(\delta),\delta\rangle=p_i(\delta)^2.\]
  It remains to check the two inequalities in the definition of $B_{\psi}$. Since $\langle x,y\rangle =\langle p_1(x),y\rangle$, we only need to consider the situation in $V_1$.
  It is an indefinite plane, and there the points of length $\psi(p_1)$ form a (non-compact) hyperbola whose asymptotes $\left\{x^2=0\right\}$ are the eigenspaces of $f|V_1$. Furthermore $f$ acts by translation along this hyperbola.
  Since $f$ is an isometry and commutes with $\RR [a]$, we get that $\psi_\delta=\psi_{\pm f^n(\delta)}$. Hence after replacing $\delta$ by a suitable $\pm f^n(\delta)$, we can assume that $\delta \in B_\psi$.
    
  We now turn to the compactness of $B_\psi=\prod_i (V_i \cap B_{\psi})$.
  Recall that $V_i$ is negative definite for $i \geq 2$, hence 
  $(V_i \cap B_{\psi})$ is a (compact) sphere of radius $\sqrt{-\psi(p_i)}$.
  Since $y^2>0$, the lines $y^\perp \cap V_1$ and $f(y)^\perp \cap V_1$ intersect each connected component of the hyperbola in a single point. Then $V_1 \cap B_{\psi}$ is the path along one connected component of the hyperbola between these two points, which is thus compact. 
  
\begin{figure}[!htbp] \label{fig:positivity_plot}
\includegraphics[width=8cm]{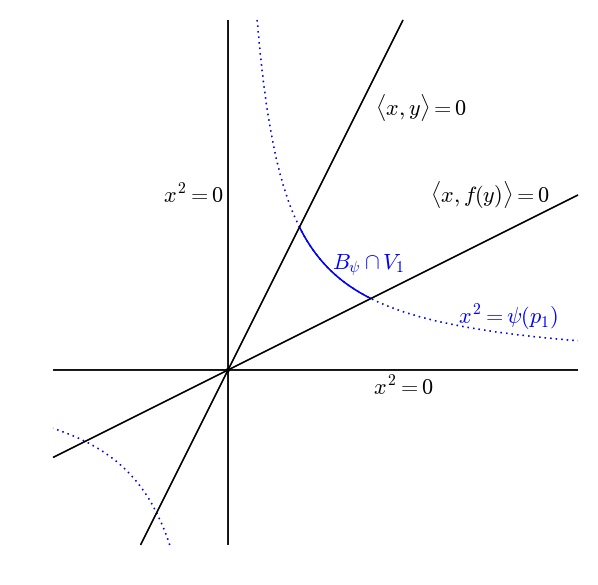}
\caption{Fundamental domain in $V_1$.}
\end{figure}
\end{proof}

\begin{remark}
For practical applications we compute the integral points of the convex hull Conv$(B_\psi)$
with SCIP\cite{scip} and cplex\cite{cplex} and check which of them are roots. Depending on the rank, computation times vary between seconds and a few minutes.  
\end{remark}

The following corollary shows that we do not need to care too much about the isometry on any negative-definite invariant sublattice as long as it is positive.
\begin{corollary}\label{coro:positivity_finite_part_not_important}
Let $S\oplus C \hookrightarrow L$ be a primitive extension (see forthcoming Definition \ref{def:primitive_extension}) of a hyperbolic lattice $S$ and a negative definite lattice $C$. Let $f_S \in O^+(S), f_1,f_2 \in O(C)$ be positive automorphisms such that $f_S \oplus f_i, i=1,2$ extends to $L$. Then 
$(L,f_S\oplus f_1)$ is positive if and only if $(L,f_S \oplus f_2)$ is. 
\end{corollary}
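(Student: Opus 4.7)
The plan is to reduce the positivity of $(L, f_S \oplus f_i)$ to a condition depending only on $f_S$, by exhibiting an explicit candidate chamber of $V_L$ built from chambers of $V_S$ and $V_C$.

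For each $i$, fix a chamber $\alpha_{C,i} \subset V_C$ preserved by $f_i$ and a representative $h_C^{(i)} \in \alpha_{C,i}$. Choose $h_S$ in the positive cone of $S$ generic enough that $\langle h_S, \pi_S\delta\rangle \neq 0$ for every $\delta \in \Delta_L$ with $\pi_S\delta \neq 0$ (possible, since one must avoid only a countable union of hyperplanes). For $\epsilon > 0$ small set $h^{(i)} := h_S + \epsilon h_C^{(i)}$; this lies in $V_L$, because the $S$-contribution to $\langle h^{(i)}, \delta\rangle$ dominates when $\pi_S\delta \neq 0$, while if $\pi_S\delta = 0$ one has $\delta \in \Delta_C$ (using $L \cap (C\otimes\mathbb{Q}) = C$) and the remaining $C$-contribution $\epsilon\langle h_C^{(i)}, \delta\rangle$ is nonzero by the chamber property of $h_C^{(i)}$.

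The key claim is that, for $\epsilon$ sufficiently small, the chamber of $V_L$ containing $h^{(i)}$ is preserved by $f = f_S \oplus f_i$ if and only if $f_S$ preserves the sign of $\langle h_S, \pi_S\delta\rangle$ for every $\delta \in \Delta_L$ with $\pi_S\delta \neq 0$. The verification splits by cases on the root $\delta$: for $\delta \in \Delta_S$ the condition translates to $f_S$ preserving the chamber of $V_S$ containing $h_S$; for $\delta \in \Delta_C$ it is automatic from $f_i(\alpha_{C,i}) = \alpha_{C,i}$; for glue roots the $S$-contributions dominate both $\langle h^{(i)}, \delta\rangle$ and $\langle f h^{(i)}, \delta\rangle$ for small $\epsilon$, reducing preservation to the sign condition on $f_S$.

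Since this sign condition depends only on $f_S$ and $h_S$ and not on $i$, it gives positivity of $(L, f_S \oplus f_i)$ simultaneously for $i = 1$ and $i = 2$. For the converse -- that positivity of $(L, f_S \oplus f_i)$ via an arbitrary preserved chamber $\alpha_L$ forces the sign condition on $f_S$ for a suitable $h_S$ -- I would use a scaling limit: given a generic $h \in \alpha_L$, consider the ray $h_T := T \pi_S h + \pi_C h$ and its image $f(h_T) = f(h) + T\, f_S \pi_S h$. For large $T$ the signs of $\langle h_T, \delta\rangle$ and $\langle f(h_T), \delta\rangle$ are dominated by $\langle \pi_S h, \pi_S\delta\rangle$ and $\langle f_S \pi_S h, \pi_S\delta\rangle$ respectively; provided that $h_T$ and $f(h_T)$ remain in $\alpha_L$, these signs must agree, yielding the condition for $h_S = \pi_S h$.

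The main obstacle is this converse. The ray $h_T$ may cross chamber boundaries of $V_L$ as $T$ grows, so the ``asymptotic chamber'' in direction $\pi_S h$ need not coincide with $\alpha_L$. Resolving this requires either a sharper genericity argument ensuring $h_T \in \alpha_L$ on a cofinal range of $T$ for some $h$, or an indirect argument via the quadratic positivity test of Theorem~\ref{thm:quadratic_test}, showing that the pure-state optimal solutions $\psi_\delta$ and the integer points of $B_\psi$ producing obstructing roots are the same for $i=1$ and $i=2$ (using that $V_1$ and the auxiliary $y$, $f(y)$ lie in $S\otimes\mathbb{R}$ whenever $f_S$ has spectral radius exceeding one, and that the finite order of $f_i$ on the negative-definite $C$ forces any cyclic or obstructing contribution from $C$ to be absent).
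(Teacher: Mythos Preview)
Your geometric approach---building a candidate chamber from $h_S + \epsilon\, h_C^{(i)}$ and trying to show that the preservation condition depends only on $f_S$---is quite different from the paper's, and the obstacle you flag in the converse direction is a genuine gap. The scaling ray $h_T$ will typically cross infinitely many walls as $T \to \infty$, so there is no reason the asymptotic chamber should coincide with the original $\alpha_L$; and the alternative via Theorem~\ref{thm:quadratic_test} does not obviously help either, since the lattice of mixed states $M$, the optimal $\psi$, and the sets $B_\psi$ all depend on the full isometry $f_S \oplus f_i$, not only on $f_S$. Even your forward direction needs more care than you indicate: ``$\epsilon$ small'' must work uniformly against infinitely many roots $\delta$ with $\pi_S\delta \neq 0$, which requires an explicit appeal to local finiteness of the root-hyperplane arrangement inside the positive cone (you do not invoke this).

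The paper bypasses all of this with a two-line argument whose key idea you are missing. Since $C$ is negative definite, $O(C)$ is finite, so $f_1$ and $f_2$ have finite order; choose $n$ with $f_1^n = f_2^n = \mathrm{id}_C$, so that
\[
(f_S \oplus f_1)^n \;=\; f_S^{\,n} \oplus \mathrm{id}_C \;=\; (f_S \oplus f_2)^n.
\]
Positivity passes trivially to powers (the same chamber is preserved), and in the other direction one uses that an obstructing root which is not cyclic remains obstructing for every power of the isometry. Hence $(L, f_S \oplus f_1)$ and $(L, f_S \oplus f_2)$ are each positive exactly when their common $n$-th power is. The reduction to a common power via finiteness of $O(C)$ is the whole trick; once you have it, no chamber ever needs to be identified explicitly.
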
 
\begin{proof}
Since the $f_i$ are of finite order, we can find $n \in \mathbb{N}$ such that $(f_S \oplus f_1)^n=(f_S \oplus f_2)^n$. To finish the proof note that if an obstructing root is not cyclic it stays obstructing for all powers of an isometry. 
\end{proof}



\section{Lattices in number fields}  \label{sec:lattices_numberfields}
Our goal is to construct a lattice together with an isometry of given characteristic polynomial. First we consider the case when the characteristic polynomial is irreducible, and the reducible case is treated in the next section. The basic construction we recall in this section is due to McMullen \cite{mcmullen:minimum}, where the reader is referred for more details and proofs. \\

A pair $(L,f)$ of a lattice $L$ and an isometry $f$ of $L$ with characteristic polynomial $p(x)\in \mathbb{Z}[x]$ is called a $p(x)$-lattice. Two $p(x)$-lattices $(M,f)$ and $(N,g)$ are isomorphic if there is an isometry $\phi: M \rightarrow N$ with $\phi \circ f=g \circ \phi$.
Given a $p(x)$-lattice $(L,f)$ and $a \in \mathbb{Z}[f+f^{-1}]$, we obtain a new symmetric bilinear form on $L$ by setting
\[\left\langle g_1 , g_2 \right \rangle_a=\left\langle ag_1 , g_2 \right \rangle.\]
The lattice $L$ equipped with this new product is called the \textit{twist} of $L$ by $a$ and is denoted by $(L(a),f)$. Note that the twist of an even lattice stays even.\\
\\
A polynomial $p(x)\in \mathbb{Z}[x]$ of degree $2d$ is called {\em reciprocal} if $x^{2d}p(x^{-1})=p(x)$, or equivalently if its coefficients form a palindrome. 
Associated to such $p(x)$ is its trace polynomial $r(y)\in\mathbb{Z}[y]$, defined by the equality \[p(x)=x^d r(x+x^{-1}).\]
From now on we assume that $p(x)$ is irreducible. Then $K:=\mathbb{Q}[x]/p(x)$ is a quadratic field extension of $k:=\mathbb{Q}[y]/r(y)$. The principal $p(x)$-lattice $(L_0,f)$ is the abelian group $L_0=\mathbb{Z}[x]/p(x) \subseteq K$, equipped with the bilinear pairing
\[\left\langle g_1(x),g_2(x) \right\rangle= Tr^K_\mathbb{Q}\left(\frac{g_1(x)g_2(x^{-1})}{r'(x+x^{-1})}\right)\]
(where $r'(y)$ the formal derivative of $r(y)$) and the isometry $f$ given by multiplication with $x$. Its characteristic polynomial is of course $p(x)$.
We note that the bilinear form is even and its determinant is given by $|\det L_0|=|p(1)p(-1)|$.\\
We call an irreducible reciprocal polynomial $p(x)\in\mathbb{Z}[x]$ \emph{simple} if
$\mathbb{Z}[x]/p(x)=\mathcal{O}_K$ is the full ring of integers of $K$, the field $K$ has class number one, and $|p(1)p(-1)|$ is square free \cite{mcmullen:minimum}.

Examples of simple reciprocal polynomials include the minimal Salem polynomials in degrees up to $22$ as well as the cyclotomic polynomials $c_n$ of degree up to $20$ if $n\neq 2^k$ (cf. \cite{mcmullen:minimum}). Moreover, if $p(x)$ is simple reciprocal, then every $p(x)$-lattice is isomorphic to a twist $L_0(a)$ of the principal $p(x)$-lattice with $a \in \mathcal{O}_k$ \cite[Theorem 5.2]{mcmullen:minimum}.
The discriminant group of a twist $L_0(a)$ is easily controlled by the norm of $a$. 
\begin{lemma}\label{lem:dual is fractional ideal}
 Let $p(x)$ be a simple reciprocal polynomial. Then
 the dual lattice $L_0^\vee$ of the principal $p(x)$-lattice is a fractional ideal. Moreover
 \[L_0^\vee= \tfrac{1}{t}\mathcal{O}_K\]
 for some $t \in \mathcal{O}_K$ and
 \[A_{L_0(a)} \cong L_0^\vee/aL_0 \cong \mathcal{O}_K/at\mathcal{O}_K,\]
 are isomorphic as $\mathcal{O}_K$-modules.
 In particular
 \[\left|\det L_0(a)\right|=\left|\det L_0 \cdot N^K_\mathbb{Q}(a)\right|.\]
\end{lemma}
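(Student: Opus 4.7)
The plan is to identify $L_0^\vee$ explicitly as a fractional $\mathcal{O}_K$-ideal via the classical description of duals under trace pairings, and then track the effect of twisting by $a$.

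First I would embed $L_0^\vee$ into $K$ via the bilinear form, writing
\[ L_0^\vee = \{h\in K \mid \mathrm{Tr}^K_\mathbb{Q}(g\bar h/r'(y))\in \mathbb{Z} \text{ for all } g\in \mathcal{O}_K\},\]
where $\bar{\phantom{x}}$ denotes the involution $x\mapsto x^{-1}$ (note $\overline{r'(y)}=r'(y)$ since $y=x+x^{-1}$ is fixed). Substituting $u=\bar h/r'(y)$ and using that the involution preserves $\mathcal{O}_K$ and hence the inverse different $\mathfrak{d}^{-1}_{K/\mathbb{Q}}$, this exhibits $L_0^\vee = r'(y)\,\mathfrak{d}^{-1}_{K/\mathbb{Q}}$ as a fractional $\mathcal{O}_K$-ideal. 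Since $\mathcal{O}_K=\mathbb{Z}[x]/p(x)$ (simplicity of $p(x)$), the classical formula gives $\mathfrak{d}_{K/\mathbb{Q}}=(p'(x))$, so
\[ L_0^\vee = \tfrac{r'(y)}{p'(x)}\mathcal{O}_K = \tfrac{1}{t}\mathcal{O}_K, \qquad t:=p'(x)/r'(y).\]
A direct differentiation of $p(x)=x^d r(x+x^{-1})$, using $r(y)=0$ in $K$, gives $t = x^{d-1}(x^2-1)\in\mathcal{O}_K$, confirming that $t$ is an algebraic integer (class number one of $K$ is not needed here, but it ensures all fractional ideals appearing are principal).

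For the twist: the group underlying $L_0(a)$ equals $\mathcal{O}_K$, and since $a\in\mathcal{O}_k$ is fixed by the involution, the twisted pairing satisfies $\langle g_1,g_2\rangle_a=\langle g_1,ag_2\rangle$. Therefore, viewed inside $K$,
\[ L_0(a)^\vee = a^{-1}L_0^\vee = \tfrac{1}{at}\mathcal{O}_K.\]
Multiplication by $at$ (respectively by $a$ on one side and $t$ on the other) then yields the chain of $\mathcal{O}_K$-module isomorphisms
\[ A_{L_0(a)} = \tfrac{1}{at}\mathcal{O}_K/\mathcal{O}_K \;\xrightarrow{\;\cdot a\;}\; \tfrac{1}{t}\mathcal{O}_K/a\mathcal{O}_K = L_0^\vee/aL_0 \;\xrightarrow{\;\cdot t\;}\; \mathcal{O}_K/at\mathcal{O}_K.\]

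Finally, the determinant formula follows from $|\det L_0(a)|=|A_{L_0(a)}|=|\mathcal{O}_K/at\mathcal{O}_K|=|N^K_\mathbb{Q}(at)|=|N^K_\mathbb{Q}(a)|\cdot|N^K_\mathbb{Q}(t)|$, together with the computation $N^K_\mathbb{Q}(t)=N^K_\mathbb{Q}(x)^{d-1}N^K_\mathbb{Q}(x-1)N^K_\mathbb{Q}(x+1)=1\cdot p(1)\cdot p(-1)$ (using $p$ monic and reciprocal), which matches $|\det L_0|=|p(1)p(-1)|$ recalled above.

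The only slightly delicate point is the identification $L_0^\vee=r'(y)\mathfrak{d}^{-1}_{K/\mathbb{Q}}$, which requires being careful about the involution $x\mapsto x^{-1}$ and noting that $\mathcal{O}_K$ (and hence $\mathfrak{d}^{-1}_{K/\mathbb{Q}}$) is stable under it; everything else is bookkeeping with fractional ideals.
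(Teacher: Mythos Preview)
Your argument is correct and in fact more explicit than the paper's. The paper simply observes that $L_0^\vee$ is an $\mathcal{O}_K$-submodule of $K$ (hence a fractional ideal), invokes the class-number-one hypothesis to conclude it is principal, and then computes $L_0(a)^\vee=\tfrac{1}{a}L_0^\vee$ exactly as you do. By contrast, you identify $L_0^\vee=r'(y)\,\mathfrak{d}_{K/\mathbb{Q}}^{-1}$ via the inverse different and then use $\mathfrak{d}_{K/\mathbb{Q}}=(p'(x))$ to produce $t$ explicitly; as you note, this makes the class-number-one assumption unnecessary for the principality of $L_0^\vee$. Your approach buys an explicit generator and an independent verification of the determinant formula via norms, while the paper's approach is shorter but leaves $t$ abstract.

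One small computational slip: differentiating $p(x)=x^d r(x+x^{-1})$ and using $r(y)=0$ in $K$ gives $p'(x)=x^d r'(y)(1-x^{-2})=x^{d-2}(x^2-1)r'(y)$, so $t=x^{d-2}(x^2-1)$ rather than $x^{d-1}(x^2-1)$. This is harmless for the statement since $x$ is a unit in $\mathcal{O}_K$ (the constant term of a monic reciprocal polynomial is $1$), so the two elements generate the same ideal and your norm computation $N^K_\mathbb{Q}(t)=p(1)p(-1)$ is unaffected.
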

\begin{proof}
 By simplicity of $p(x)$, $\mathbb{Z}[f]=\mathcal{O}_K$. Recall that $f$ acts on $L_0^\vee$ by $\QQ$-linear extension, hence $L_0,L_0^\vee$ and $A_{L_0}$ are $\mathbb{Z}[f]=\mathcal{O}_K$-modules.  
 Notice that $L_0^\vee$ is finitely generated and contained in $L_0 \otimes \QQ=K$, and in particular it is a fractional ideal. 
 Again by simplicity $\mathcal{O}_K$ has class number one, hence all fractional ideals are principal. Since $L_0=\mathcal{O}_K\subseteq L_0^\vee$, we can find $t\in \mathcal{O}_K$ such that \[L_0^\vee= \tfrac{1}{t}\mathcal{O}_K.\]
 For the statement about the discriminant group, let $v\in L_0(a)^\vee$. This is equivalent to $\left\langle av,L_0\right\rangle\subseteq \mathbb{Z}$, which in turn means $v\in \tfrac{1}{a}L_0^\vee$. This gives the isomorphism
 \[A_{L_0(a)}=\left(\tfrac{1}{a}L_0^\vee\right)\! /L_0 \cong L_0^\vee/aL_0,\]
from which the last formula follows at once.
\end{proof}

We see that twisting by a unit leaves the discriminant group unchanged, but it may change the discriminant form and the signature (for a precise description of these changes we refer to \cite{mcmullen:siegel_disk}). However, twisting by the square $u^2$ of a unit $u \in \mathcal{O}_k^\times$ results in an isomorphic $p(x)$-lattice:
\[L(u^2)\cong L, \quad x \mapsto ux.\]
This implies that there are only finitely many non-isomorphic $p(x)$-lattices of given discriminant. 


\section{Gluing lattices and isometries} \label{sec:glue}
Now, let $(L,f)$ be a $p(x)q(x)$-lattice with $p(x),q(x) \in \mathbb{Z}[x]$ coprime polynomials. 
Then $\ker p(f) \oplus \ker q(f)$ is a finite index subgroup of $L$. In this section we see how to construct $L$ from the smaller parts $\ker p(f)$ and $\ker q(f)$. The theory of primitive extensions and their relation to discriminant forms, which is the base of our construction, was initiated by Nikulin in \cite{nikulin:quadratic_forms}. From now on all lattices are supposed to be even.\\

We call an embedding of lattices $N\hookrightarrow L$ primitive if $L/N$ is torsion free.
Primitive sublattices arise as kernels of endomorphisms and also in geometry, such as 
$\NS(X)$ or the transcendental lattice $T(X)$ in $H^2(X,\mathbb{Z})$ of a complex $K3$ surface.
\begin{definition}[Primitive extension \cite{nikulin:quadratic_forms}] \label{def:primitive_extension}
Let $M$ and $N$ be two lattices. A {\em primitive extension} of $M$ and $N$ is an overlattice $M \oplus N \hookrightarrow L$ (of the same rank), such that $M$ and $N$ are primitive sublattices of $L$. 
\end{definition}
Primitive extensions are determined by a glue map $\phi$ defined on certain subgroups 
 \[A_{M} \supseteq H_M \xrightarrow[\phi]{\sim} H_N \subseteq A_{N},\]
 with the extra condition that $q_M=-q_N \circ \phi$.
Indeed, given $\phi$ as above, we define \emph{the glue} 
\[H:=\{x+\phi(x) |x\in H_M\} \subseteq A_{M} \oplus A_{N}\] 
of the primitive extension as the graph of $\phi$.
By construction, $H$ is a totally isotropic subspace of $A_M \oplus A_N$. Hence we can define an integral lattice $L = M \oplus_{\phi} N$ via
\begin{equation}\label{eq:glue H isos}
L/(M\oplus N) =H \cong H_M\cong H_N.
\end{equation}
It is not hard to see that $A_L=H^\perp/H$. Conversely, if $M \oplus N \hookrightarrow L$ is a primitive extension and $p_M: L^\vee \rightarrow M^\vee$ and $p_N: L^\vee \rightarrow N^\vee$ are the natural projections, there is a natural isomorphism $H_M:=p_M(L)/M \longrightarrow H_N:=p_N(L)/N$. \\

If $L$ is unimodular, it is well known that
\[A_{M}=H_M \xrightarrow[\phi]{\cong} H_N=A_{N}.\]
For example $A_{T(X)} \cong A_{\NS(X)}$ for a K3 surface $X$ over $\mathbb{C}$.
For more general lattices $L$, there is the following constraint on the size of the glue:

\begin{lemma}\label{lem:glue estimate}
 \[|A_{N}/H_N| \cdot |A_{M}/H_M| = \det L \]
\end{lemma}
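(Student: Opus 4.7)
The plan is to combine two pieces of elementary data: the standard index-squared formula for finite-index overlattices, and the fact (built into the definition of the glue map) that $[L : M \oplus N]$ equals the common order $|H_M| = |H_N|$ of the glue subgroups.

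First I would recall that for any finite-index inclusion of non-degenerate lattices $M \oplus N \hookrightarrow L$ of the same rank, one has
\[
\det(M \oplus N) \;=\; [L : M \oplus N]^{2} \cdot \det L,
\]
which follows from comparing Gram matrices in a $\ZZ$-basis of $L$ versus a $\ZZ$-basis of $M \oplus N$ via the change-of-basis matrix, whose determinant is $\pm[L : M \oplus N]$. Taking absolute values and using $|\det M| = |A_M|$, $|\det N| = |A_N|$, and $\det(M\oplus N) = \det M \cdot \det N$, this becomes
\[
|A_M| \cdot |A_N| \;=\; [L : M \oplus N]^{2} \cdot |\det L|.
\]

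Next I would invoke the identification (\ref{eq:glue H isos}), namely $L/(M\oplus N) \cong H_M \cong H_N$, which gives $[L:M\oplus N] = |H_M| = |H_N|$. Substituting,
\[
|A_M| \cdot |A_N| \;=\; |H_M|\cdot |H_N|\cdot |\det L|,
\]
and dividing both sides by $|H_M|\cdot |H_N|$ yields the claim
\[
|A_M/H_M|\cdot |A_N/H_N| \;=\; |\det L|.
\]

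There is no real obstacle here; the only thing to be mindful of is the sign convention in $\det L = (-1)^{n_-}|A_L|$, so the equality as stated in the lemma is an equality of integers (with sign) rather than of absolute values only if one reads $\det L$ as $\pm|A_L|$ consistently. Since $M \oplus N$ and $L$ have the same signature, the signs match and the identity holds as written.
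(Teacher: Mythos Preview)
Your proof is correct and follows essentially the same approach as the paper: both use the standard index-squared formula $\det M \cdot \det N = [L:M\oplus N]^2 \det L$ together with the identification $L/(M\oplus N)\cong H_M\cong H_N$ from (\ref{eq:glue H isos}), then divide through by $[L:M\oplus N]^2$. Your write-up simply spells out a few more details (absolute values, signs) than the paper's terse version.
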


\begin{proof}
Divide the standard formula 
 \[\det M \det N = [L:M \oplus N]^2 \det L \]
by $[L:M\oplus N]^2$ and use the isomorphisms (\ref{eq:glue H isos}).
\end{proof}

We now prove some technical results that are used often in the sequel.

\begin{lemma} \label{lem:structure_glue_quotient}
Let $N \hookrightarrow L$ be a primitive embedding.
Then there is a surjection $A_L \twoheadrightarrow A_N/H_N$.
\end{lemma}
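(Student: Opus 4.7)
The plan is to construct the desired surjection as the map induced on discriminant groups by the natural projection $p_N : L^\vee \to N^\vee$ (restriction of linear forms from $L$ to $N$), exactly the same projection used in the definition of $H_N$ in the preceding paragraph. The key observation will be that primitivity of $N \hookrightarrow L$ is equivalent to the surjectivity of $p_N$.

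First, I would note that primitivity of $N\hookrightarrow L$ means $L/N$ is torsion-free, hence free as a $\ZZ$-module. Dualizing the short exact sequence $0 \to N \to L \to L/N \to 0$ then yields the short exact sequence
\[0 \to (L/N)^\vee \to L^\vee \xrightarrow{\;p_N\;} N^\vee \to 0,\]
so $p_N$ is surjective.

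Next, I would identify the target. By construction, restricting $p_N$ to $N \subseteq L \subseteq L^\vee$ recovers the canonical inclusion $N \hookrightarrow N^\vee$, so $N = p_N(N) \subseteq p_N(L)$; hence $H_N = p_N(L)/N$ is a well-defined subgroup of $A_N = N^\vee/N$, and the third isomorphism theorem gives $A_N/H_N \cong N^\vee/p_N(L)$.

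Finally, I would assemble the map: compose $p_N$ with the quotient $N^\vee \twoheadrightarrow N^\vee/p_N(L)$. This composition is surjective (since $p_N$ is) and vanishes on $L \subseteq L^\vee$, so it factors through $A_L = L^\vee/L$ to produce a surjection $A_L \twoheadrightarrow N^\vee/p_N(L) \cong A_N/H_N$, as required. There is no real obstacle here; the only substantive input is the equivalence of primitivity with surjectivity of the dual restriction, and the rest is a routine application of the isomorphism theorems.
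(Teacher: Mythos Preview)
Your proof is correct and follows essentially the same route as the paper's: both construct the surjection from the restriction map $p_N: L^\vee \to N^\vee$, use primitivity to get surjectivity of $p_N$, and then pass to quotients. The paper packages this as a commutative diagram with exact rows, whereas you unwind it explicitly via the isomorphism theorems, but the content is identical.
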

\begin{proof}
We have the following induced diagram with exact rows
\begin{displaymath}
\xymatrix{
0 \ar[r] & L \ar[r] \ar@{->>}[d] & L^{\vee} \ar[r] \ar@{->>}[d] & L^{\vee}/L = A_L \ar[r] \ar@{-->>}[d] & 0 \\
0 \ar[r] & p_N(L) \ar[r] & N^{\vee} \ar[r] & N^{\vee}/p_N(L) \cong A_N/H_N \ar[r] & 0
}
\end{displaymath}
where the primitivity of $N \hookrightarrow L$ gives the surjectivity of the central vertical arrow. The commutativity of the diagram then implies the desired surjection.
\end{proof}

\begin{corollary} \label{cor:structure_glue_quotient}
Let $M \oplus N \hookrightarrow L$ be a primitive extension and $q$ a prime number. 
If $L$ is $q$-elementary, then the quotient $A_N / H_N$ is an $\mathbb{F}_q$-vector space.
\end{corollary}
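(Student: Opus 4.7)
The plan is to invoke the previous Lemma \ref{lem:structure_glue_quotient} directly and then use functoriality of annihilators. Concretely, that Lemma produces a surjective homomorphism of finite abelian groups
\[
A_L \twoheadrightarrow A_N / H_N.
\]
Since $L$ is $q$-elementary by hypothesis, $A_L$ is annihilated by $q$, i.e.\ it is already an $\mathbb{F}_q$-vector space. Because the surjection above is $\ZZ$-linear, the image $A_N/H_N$ is likewise annihilated by $q$, and being a finite abelian group of exponent dividing $q$ it inherits the structure of an $\mathbb{F}_q$-vector space.

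So the whole proof is a two-line application of Lemma \ref{lem:structure_glue_quotient} combined with the trivial observation that quotients (or, more generally, epimorphic images) of $q$-elementary abelian groups are $q$-elementary. There is no real obstacle here; the corollary is formal. If anything, the only thing to keep in mind is that the role of $M$ and $N$ in Lemma \ref{lem:structure_glue_quotient} is symmetric, so the same argument shows that $A_M/H_M$ is an $\mathbb{F}_q$-vector space as well, a fact which will likely be invoked implicitly whenever both glue quotients need to be analysed.
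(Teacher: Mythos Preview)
Your proposal is correct and matches the paper's intent exactly: the corollary is stated without proof immediately after Lemma~\ref{lem:structure_glue_quotient}, so the intended argument is precisely the one you give --- apply the surjection $A_L \twoheadrightarrow A_N/H_N$ and observe that an epimorphic image of a $q$-elementary group is $q$-elementary.
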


An isometry $f_M \oplus f_N$ defined on $M \oplus N$ extends to $L$ if $\overline{f_M}: A_M \rightarrow A_M$ preserves $H_M$, $f_N$ preserves $H_N$ and $\phi \circ \overline{f_M}=\overline{f_N} \circ \phi$. This imposes compatibility conditions on the characteristic polynomials $\chi_M$ and $\chi_N$ of the two isometries. In particular, the following result, proved originally by McMullen for unimodular primitive extensions, holds also for arbitrary ones.

\begin{theorem} \label{thm:resultant}
With the above notations, if $f_M\oplus f_N$ extends to $L$, then any prime number dividing $|H|$ also divides the resultant $\res(\chi_M,\chi_N)$.
\end{theorem}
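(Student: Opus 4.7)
My plan is to find a non-trivial common factor of $\chi_M$ and $\chi_N$ modulo any prime $q$ dividing $|H|$, which by definition of the resultant forces $q \mid \res(\chi_M,\chi_N)$.

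First I would exploit the isomorphism $H \cong H_M \cong H_N$ from (\ref{eq:glue H isos}) to reduce to glue on the $q$-torsion: if $q$ divides $|H|$, then $H_M[q]$ and $H_N[q]$ are non-zero, and since $\phi$ is $\mathcal{O}_k$-equivariant (in the sense that it intertwines $\overline{f_M}$ and $\overline{f_N}$), the restrictions $\overline{f_M}\vert_{H_M[q]}$ and $\overline{f_N}\vert_{H_N[q]}$ are conjugate $\mathbb{F}_q$-linear maps. Call their common characteristic polynomial $g(x) \in \mathbb{F}_q[x]$; it has positive degree.

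The main point is then to show that $g(x)$ divides both $\chi_M$ and $\chi_N$ modulo $q$. I would establish this by tensoring the short exact sequence
\[
0 \longrightarrow M \longrightarrow M^\vee \longrightarrow A_M \longrightarrow 0
\]
with $\mathbb{Z}/q\mathbb{Z}$ and identifying $\mathrm{Tor}_1^{\mathbb{Z}}(A_M,\mathbb{Z}/q\mathbb{Z}) = A_M[q]$, which yields an $f_M$-equivariant four-term exact sequence
\[
0 \longrightarrow A_M[q] \longrightarrow M/qM \longrightarrow M^\vee/qM^\vee \longrightarrow A_M/qA_M \longrightarrow 0.
\]
Since $M$ and $M^\vee$ have the same rank and $f_M$ extends to an automorphism of $M^\vee$ with the same characteristic polynomial $\chi_M$, the induced map on $M/qM$ has characteristic polynomial $\chi_M \bmod q$. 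Exactness (and multiplicativity of characteristic polynomials for equivariant exact sequences of $\mathbb{F}_q$-vector spaces) then forces the characteristic polynomial of $\overline{f_M}$ on the invariant subspace $A_M[q] \hookrightarrow M/qM$ to divide $\chi_M \bmod q$. Restricting further to the invariant subspace $H_M[q] \subseteq A_M[q]$ shows $g(x) \mid \chi_M \bmod q$, and the symmetric argument gives $g(x) \mid \chi_N \bmod q$.

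Since $g(x)$ has positive degree, $\chi_M$ and $\chi_N$ share a root in $\overline{\mathbb{F}_q}$, so $\res(\chi_M,\chi_N) \equiv 0 \pmod q$ as required. The only delicate point is step two: writing down the Tor exact sequence correctly and checking the equivariance, since one must be careful that the tensor product is taken in the category of $\mathbb{Z}[f_M]$-modules so that characteristic polynomials are multiplicative along the sequence. Everything else is essentially a bookkeeping argument combining the graph description of $H$ with the compatibility $\phi \circ \overline{f_M} = \overline{f_N} \circ \phi$.
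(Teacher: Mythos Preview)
Your proposal is correct and follows essentially the same strategy as McMullen's original argument, to which the paper defers verbatim: exhibit a non-trivial common factor of $\chi_M$ and $\chi_N$ modulo $q$ by looking at the induced action on the $q$-torsion of the glue. The only difference is in how you justify that the polynomial on $H_M[q]$ divides $\overline{\chi_M}$: McMullen simply uses Cayley--Hamilton (since $\chi_M(f_M)=0$ on $M$, hence on $M^\vee$, on $A_M$, and on $H_M[q]$, the \emph{minimal} polynomial there already divides $\overline{\chi_M}$), whereas you take the slightly more structural route via the Tor exact sequence to embed $A_M[q]\hookrightarrow M/qM$ and deduce divisibility of the \emph{characteristic} polynomial. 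Either suffices, and your version is a perfectly valid variant.
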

\begin{proof}
The proof of \cite[Theorem 4.3]{mcmullen:minimum} for the unimodular case works verbatim for general primitive extensions.
\end{proof}

Concerning sufficient conditions for the isometry $f_M \oplus f_N$ to extend to $L$, McMullen obtained some results when the discriminant groups are direct sums of $\mathbb{F}_q$-vector spaces (\cite[Theorem 3.1]{mcmullen:entropy_and_glue}
), a situation that in good cases can be achieved by twisting (\cite[Theorem 4.3]{mcmullen:entropy_and_glue}
).\\

For later use we close this section with the following 
\begin{proposition}
 Let $L$ be a $q$-elementary lattice and $f \in O(L)$ an isometry. Then the characteristic polynomial $\chi_{\overline{f}|A_L}(x) \in \mathbb{F}_q\left[x\right]$ divides the reduction of $\chi_{f|L}(x)$ modulo $q$.
\end{proposition}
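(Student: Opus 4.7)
The plan is to realise $A_L$ as a quotient in a short exact sequence of finite-dimensional $\mathbb{F}_q[f]$-modules whose middle term has characteristic polynomial equal to the reduction of $\chi_{f|L}(x)$ modulo $q$, and then invoke multiplicativity of characteristic polynomials in short exact sequences.

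The first step is to exploit $q$-elementarity: since $qA_L=0$ we have the chain of inclusions $qL^\vee \subseteq L \subseteq L^\vee$. Both $L$ and $L^\vee$ are preserved by $f$ (the action on $L^\vee$ being the one coming from the $\mathbb{Q}$-linear extension to $L\otimes\mathbb{Q}$), and consequently so is $qL^\vee$. This yields an $f$-equivariant short exact sequence of $\mathbb{F}_q$-vector spaces
$$0 \longrightarrow L/qL^\vee \longrightarrow L^\vee/qL^\vee \longrightarrow A_L \longrightarrow 0.$$

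The second step is to identify the characteristic polynomial of $f$ on the middle term. Since $L^\vee$ is a free $\mathbb{Z}$-module of rank $n=\operatorname{rk} L$, the quotient $L^\vee/qL^\vee$ is a free $\mathbb{F}_q$-module of the same rank. Choosing a $\mathbb{Z}$-basis of $L^\vee$ and reducing the matrix of $f$ modulo $q$, one sees that $\chi_{f|L^\vee/qL^\vee}(x)$ is the mod-$q$ reduction of $\chi_{f|L^\vee}(x)$; the latter equals $\chi_{f|L}(x)$ because both polynomials compute the characteristic polynomial of $f$ acting on the common $\mathbb{Q}$-vector space $L\otimes\mathbb{Q}=L^\vee\otimes\mathbb{Q}$.

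The final step is the standard multiplicativity of characteristic polynomials on short exact sequences, which gives
$$\chi_{f|L^\vee/qL^\vee}(x)=\chi_{f|L/qL^\vee}(x)\cdot\chi_{\overline{f}|A_L}(x)\in\mathbb{F}_q[x],$$
and hence $\chi_{\overline{f}|A_L}(x)$ divides the reduction of $\chi_{f|L}(x)$ modulo $q$, as required. The only substantive point is the identification of the middle term's characteristic polynomial with the reduction of $\chi_{f|L}$; once that is in place, the argument is formal and I do not foresee any serious obstacle.
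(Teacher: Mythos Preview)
Your proof is correct and follows essentially the same route as the paper: both use the $f$-equivariant short exact sequence $0 \to L/qL^\vee \to L^\vee/qL^\vee \to A_L \to 0$ and the identification $\chi_{f|L^\vee/qL^\vee} \equiv \chi_{f|L^\vee} = \chi_{f|L} \pmod q$. If anything, your version is slightly cleaner: the paper invokes an $f$-compatible \emph{splitting} of the sequence, whereas you correctly observe that multiplicativity of characteristic polynomials holds for any short exact sequence of finite-dimensional vector spaces, so no splitting is needed.
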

\begin{proof}
Consider the following exact sequence of $\mathbb{F}_q$-vector spaces. 
 \begin{diagram}
  0 &\rTo &L/qL^\vee &\rTo & L^\vee/qL^\vee & \rTo &L^\vee/L &\rTo &0\\
 \end{diagram}
It is compatible with the action of $f$ on each part. Thus the splitting of this sequence is compatible with $f$.
To conclude the proof recall that $\chi_{f|L^\vee}=\chi_{f|L}$ and notice that $\chi_{\overline{f}|\left(L^\vee/qL^\vee\right)}\equiv \chi_{f|L^\vee} \mod q$. 
\end{proof}


\section{Realizability Conditions} \label{sec:realizability}
Now, we connect our knowledge of the crystalline Torelli theorem and gluing to study automorphisms of positive algebraic entropy on supersingular K3 surfaces defined over an algebraically closed field $\kappa$ of characteristic $p\geq 5$. From now on, let $X$ be such a supersingular K3 surface with N\'eron-Severi lattice $\NS$. Let $F\in \Aut(X)$ be an automorphism, $f = F^*: \NS \rightarrow \NS$ the corresponding isometry of $\NS$, and $\overline{f}: A_{\NS} \rightarrow A_{\NS}$ the induced isometry of the discriminant group (or its $\kappa$-linear extension).

For a complex K3 surface $X$ the minimal polynomial of $f|T(X)$ is irreducible over $\QQ$ and hence its characteristic polynomial is a perfect power. The following proposition shows that a similar statement holds for supersingular K3 surfaces if we replace $T(X)$ by $A_{\NS(X)}$.
\begin{proposition}\label{prop:perfect_power}
The minimal polynomial of $\overline{f}$ is irreducible. In particular its characteristic polynomial
 $\chi_{\overline{f}} \in \mathbb{F}_p[x]$ is a perfect power. 
\end{proposition}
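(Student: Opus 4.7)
The plan is to exhibit a simultaneous eigenvector for both the Frobenius $\psi = \id \otimes \Fr_\kappa$ and $\overline{f}$ on $A_{\NS}\otimes\kappa$, namely the characteristic line produced by Lemma \ref{lem_characteristic_line}. Once the action on that single line is known, Frobenius-compatibility forces the entire spectrum of $\overline{f}$, and the irreducibility of the minimal polynomial follows automatically.

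First I would establish two properties of $\overline{f}$. On the one hand, since $F \in \Aut(X)$, Corollary \ref{cor:Strong_Torelli} gives $\overline{f}(P_X) = P_X$. On the other hand, $\overline{f}$ is the $\kappa$-linear extension of the $\mathbb{F}_p$-linear map on $A_{\NS}$ induced by the $\mathbb{Z}$-linear isometry $f$, so it is of the form $\overline{f}_0 \otimes \id_\kappa$ and therefore commutes with $\psi = \id \otimes \Fr_\kappa$. Combining these, $\overline{f}(\psi^i(P_X)) = \psi^i(\overline{f}(P_X)) = \psi^i(P_X)$ for every $i$, hence $\overline{f}$ preserves the intersection
\[
l \;=\; P_X \cap \psi(P_X) \cap \cdots \cap \psi^{\sigma-1}(P_X),
\]
which is a line by Lemma \ref{lem_characteristic_line}.

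Next I would write $\overline{f}|_l$ as multiplication by some scalar $\alpha \in \kappa^\times$. Commutativity with $\psi$ then forces $\overline{f}|_{\psi^i(l)}$ to be multiplication by $\alpha^{p^i}$. By the second assertion of Lemma \ref{lem_characteristic_line}, the lines $l, \psi(l), \ldots, \psi^{2\sigma-1}(l)$ span the $2\sigma$-dimensional space $A_{\NS}\otimes\kappa$; a dimension count forces them to be linearly independent, giving a basis of eigenvectors of $\overline{f}$ with eigenvalues $\alpha, \alpha^p, \ldots, \alpha^{p^{2\sigma-1}}$, all of which are Galois conjugates of $\alpha$ over $\mathbb{F}_p$.

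Finally, since $\overline{f}$ is diagonalizable, its minimal polynomial is $\prod_{\beta}(x-\beta)$ over the distinct eigenvalues; this set is a full Galois orbit, so the minimal polynomial equals the minimal polynomial $m_\alpha(x) \in \mathbb{F}_p[x]$ of $\alpha$, which is irreducible. All eigenvalues lying in this single Galois orbit, the characteristic polynomial is then a power of $m_\alpha(x)$, as required. I do not foresee any real obstacle: the only thing to be careful about is that $\overline{f}$ and $\psi$ genuinely commute, which rests on the $\mathbb{F}_p$-rationality of $\overline{f}$, and that the $\psi^i(l)$ are linearly independent, which is immediate from the spanning statement of Lemma \ref{lem_characteristic_line} and the dimension count.
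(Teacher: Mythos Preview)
Your argument is correct and follows essentially the same route as the paper's proof: both use the characteristic line of Lemma~\ref{lem_characteristic_line}, its invariance under $\overline{f}$ via commutativity with $\psi$, and the fact that the $\psi^i(l)$ span $A_{\NS}\otimes\kappa$ to conclude that the eigenvalues form a single Frobenius orbit. The only quibble is that $\overline{f}(P_X)=P_X$ is not really a consequence of Corollary~\ref{cor:Strong_Torelli} (which goes the other way) but rather of the functoriality of $c^1_{dR}$, as implicit in Ogus' Theorem~II$'$; otherwise your write-up is slightly more explicit than the paper's but identical in substance.
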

\begin{proof}
Note that $\overline{f}$ preserves the period $P_X \subset A_{\NS} \otimes \kappa$ of $X$. Since $\overline{f}$ and the semilinear automorphism $\psi=\id \otimes \Fr_{\kappa}: A_{\NS} \otimes \kappa \rightarrow A_{\NS} \otimes \kappa$ commute, the line (cf. Lemma \ref{lem_characteristic_line})
 \[l_X=P_X \cap \psi (P_X) \cap \cdots \cap \psi^{\sigma-1}(P_X)\] 
is preserved by $\overline{f}$ as well. But $\sum_i \psi^i(l_X)=A_{\NS} \otimes \kappa$, and hence we can find a basis of eigenvectors of $\overline{f}|A_{\NS}\otimes \kappa$ on which $\psi$ acts transitively.
This shows that the eigenvalues are roots of a single irreducible polynomial in $\mathbb{F}_p[x]$, the minimal polynomial of $\overline{f}$.  
\end{proof}

Assume from now on that $F$ has positive entropy and recall (see \cite[Section 3]{mcmullen:siegel_disk}) that the characteristic polynomial of $f$ factors as 
\[\chi_f=s(x)c(x)\]
where $s(x)$ is a Salem polynomial and $c(x)$ is a product of cyclotomic polynomials.
Morever the sublattices
\[S=\ker s(f) \qquad \mbox{ and } \qquad C=\ker c(f).\]
are respectively hyperbolic and negative definite. In particular, the action of $f$ on $\NS \otimes \mathbb{C}$ is semisimple, i.e., the minimal polynomial is separable. 
The inclusion
\[S\oplus C \hookrightarrow \NS\]
is a primitive extension of $S$ and $C$. By Theorem \ref{thm:resultant}, gluing can occur only over the primes $q \mid \res(s,c)$. We call such primes {\em feasible for $c$ and $s$}.

\begin{corollary}\label{cor:p_not_feasible}
If $ \cha \kappa=p$ is not feasible for $c$ and $s$, then either $A_{S,p}=0$ or $A_{C,p}=0$ (where $A_{S,p}$ resp. $A_{C,p}$ denotes the $p$-primary part of $A_S$ resp. $A_C$).
\end{corollary}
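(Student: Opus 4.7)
The plan is to argue by contradiction. Suppose both $A_{S,p}$ and $A_{C,p}$ are non-zero; the goal is to produce a common factor of $s(x)$ and $c(x)$ modulo $p$, contradicting the non-feasibility of $p$.

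First I would combine the primitive extension data with the fact that $\NS$ is $p$-elementary. Corollary \ref{cor:structure_glue_quotient} yields that $A_S/H_S$ and $A_C/H_C$ are $\mathbb{F}_p$-vector spaces, so $pA_S \subseteq H_S$ and $pA_C \subseteq H_C$. Theorem \ref{thm:resultant} together with the hypothesis $p \nmid \res(s,c)$ forces $p \nmid |H|$, hence $H_{S,p} = 0 = H_{C,p}$. Combining these, $pA_{S,p} \subseteq H_{S,p} = 0$, so $A_{S,p}$ is itself $p$-elementary, and likewise for $A_{C,p}$. Since Lemma \ref{lem:glue estimate} forces $A_S/H_S$ to be a $p$-group and $H_{S,p}$ vanishes, one gets a natural identification $A_S/H_S \cong A_{S,p}$, and analogously $A_C/H_C \cong A_{C,p}$.

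Next, Lemma \ref{lem:structure_glue_quotient} provides $f$-equivariant surjections $A_{\NS} \twoheadrightarrow A_{S,p}$ and $A_{\NS} \twoheadrightarrow A_{C,p}$. By Proposition \ref{prop:perfect_power}, the minimal polynomial $m(x) \in \mathbb{F}_p[x]$ of $\overline{f}|A_{\NS}$ is irreducible. Since both $A_{S,p}$ and $A_{C,p}$ are non-zero under our contradiction hypothesis, irreducibility forces $m(x)$ to be exactly the minimal polynomial of $\overline{f}$ on each of them.

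The main obstacle, and the crux of the argument, is to show that $m(x)$ divides both $s(x) \bmod p$ and $c(x) \bmod p$. The $q$-elementary proposition preceding this corollary does not apply directly, since $S$ and $C$ need not be $p$-elementary. The workaround is an explicit $f$-equivariant injection: for $v \in S^\vee$ with $pv \in S$, send $[v] \in A_{S,p}$ to $[pv] \in S/pS$. This is well-defined ($v \in S$ implies $pv \in pS$) and injective ($pv \in pS$ implies $v \in S$), realizing $A_{S,p}$ as a sub-$\mathbb{F}_p[f]$-module of $S/pS$. Hence $\chi_{\overline{f}|A_{S,p}}$ divides $\chi_{f|S/pS} = s(x) \bmod p$, so $m(x) \mid s(x) \bmod p$, and analogously $m(x) \mid c(x) \bmod p$. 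Thus $s(x)$ and $c(x)$ share a factor modulo $p$, which means $p \mid \res(s,c)$, contradicting the non-feasibility of $p$.
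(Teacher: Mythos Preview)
Your proof is correct. Both your argument and the paper's hinge on the same two ingredients---that $p \nmid \res(s,c)$ prevents any gluing over $p$ (Theorem \ref{thm:resultant}), and that $\overline{f}|A_{\NS}$ has irreducible minimal polynomial (Proposition \ref{prop:perfect_power})---so the approaches are essentially the same. The paper is slightly more direct: rather than invoking Lemma \ref{lem:structure_glue_quotient} to obtain separate surjections $A_{\NS} \twoheadrightarrow A_{S,p}$ and $A_{\NS} \twoheadrightarrow A_{C,p}$, it observes at once that the absence of $p$-glue gives a direct sum decomposition $A_{\NS} = A_{S,p} \oplus A_{C,p}$, so that $\chi_{\overline{f}|A_{\NS}} = \chi_{\overline{f}|A_{S,p}} \cdot \chi_{\overline{f}|A_{C,p}}$ is a perfect power of an irreducible polynomial whose two displayed factors are coprime (they divide $\overline{s}$ and $\overline{c}$ respectively), forcing one of them to be trivial. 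On the other hand, your explicit $f$-equivariant injection $A_{S,p} \hookrightarrow S/pS$ via $[v] \mapsto [pv]$ is a nice justification of the divisibility $\chi_{\overline{f}|A_{S,p}} \mid \overline{s(x)}$, which the paper simply asserts.
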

\begin{proof}
 If $p$ is not feasible for $c$ and $s$, then $p\nmid res(s,c)$ and hence we cannot glue over $p$, i.e. $A_{\NS}=A_{\NS,p}=A_{S,p} \oplus A_{C,p}$.
 In particular $$\chi_{\overline{f}|A_{S,p}} \cdot \chi_{\overline{f}|A_{C,p}}=\chi_{\overline{f}|A_{\NS}},$$ which is a perfect power by Proposition \ref{prop:perfect_power}. But $\chi_{\overline{f}|A_{S,p}} \mid \overline{s(x)}$ and $\chi_{\overline{f}|A_{C,p}} \mid \overline{c(x)}$ are coprime. 
 This is only possible if $A_{S,p}=0$ or $A_{C,p}=0$.  
\end{proof}

Note that a priori we only know $s(x)$, the minimal polynomial of the Salem number we want to realize as the exponential of the entropy of $f$, but there are many possibilities for $c(x)$. As a first constraint, we know that $c(x)$ is a product of cyclotomic polynomials $c_k(x)$ of total degree $22 - \deg s(x)$. Thus we say that a prime number $q \in \mathbb{Z}$ is {\em feasible} (for $s(x)$) if
\[q \mid \prod_{\varphi(k)\leq 22-d} \res(s,c_k),\]
or equilvalently, if the reduction $\overline{s}(x) \in \mathbb{F}_p\left[x\right]$ has a cyclotomic factor of degree at most $22 - \deg s$. In particular we can only glue over the feasible primes. \\

The following Theorem gives a list of necessary conditions for an isometry on $S$ to admit an extension to $\NS$, and in particular further restrictions on the cyclotomic factor.
We denote by $D(n)$ the minimum $D\geq 0$ such that $\mathbb{Z}^D$ has an automorphism of order $n$. Note that $D(1)=0$, $D(2)=1$ and $D(n)=D(n/2)$ if $n\equiv 2 \mod 4$. 
In any other case we have 
\[D(p_1^{e_1}\cdot\ldots\cdot p_s^{e_s})=\sum \varphi(p_i^{e_i})\]
for the prime decomposition of $n$. 

\begin{theorem}\label{thm:realizability_conditions}
Let $f$, $s(x)$ and $S$ be defined as above. Then:
 \begin{itemize}
  \item[(1)] The determinant of $S$ is divisible only by the feasible primes (for $s$) and the characteristic $p$.
  \item[(2)] The order $n$ of $\overline{f}$ on the subgroup $pA_{S} \subseteq A_S$ satisfies $D(n)\leq 22- \deg(s)$.
  \item[(3)] There is a product of distinct cyclotomic polynomials $\mu(x)$ with $\deg{ \mu (x)} \leq 22-\deg s(x)$ and $\mu(\overline{f}|pA_{S})=0$.
  \item[(4)] $f|S$ is positive. 
 \end{itemize}
\end{theorem}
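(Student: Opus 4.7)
The plan is to exploit the $p$-elementarity of $\NS$ as the common thread behind all four conditions. The key preliminary fact is that by Lemma \ref{lem:structure_glue_quotient} there is a surjection $A_\NS \twoheadrightarrow A_S/H_S$, forcing $A_S/H_S$ to be $p$-elementary. Equivalently, $pA_S \subseteq H_S$, so the glue $\phi$ identifies $pA_S$ isometrically and $\overline{f}$-equivariantly with a subgroup of $A_C$ (using the compatibility $\phi \circ \overline{f_S} = \overline{f_C} \circ \phi$ guaranteed by the fact that $f$ extends to $\NS$).

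For (1), I would argue that any prime $q \neq p$ dividing $|A_S| = |\det S|$ cannot contribute to the $p$-group $A_S/H_S$, hence $q \mid |H_S| = |H|$. Theorem \ref{thm:resultant} then yields $q \mid \res(s,c)$. Since $c$ is a product of cyclotomic factors $c_k$ with $\varphi(k) \leq 22 - \deg s$, necessarily $q \mid \res(s,c_k)$ for at least one such $k$, which is precisely the definition of feasibility.

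For (2) and (3), I would transfer $\overline{f}|pA_S$ to $A_C$ via $\phi$ and observe that $\overline{f}|A_C$ is a quotient of the action of $f$ on $C^\vee$, which over $\mathbb{Q}$ coincides with $f|C$. Since $f|C$ is semisimple with characteristic polynomial $c(x)$, its minimal polynomial $\mu(x)$ is a squarefree product of distinct cyclotomic polynomials of total degree $\leq \rk C = 22 - \deg s$, and any such $\mu$ annihilates $\overline{f}|A_C$ and hence (via $\phi$) $\overline{f}|pA_S$, proving (3). For (2), the order $n$ of $\overline{f}|pA_S$ divides the order $n'$ of $f|C$; the very existence of $f|C \in O(C)$ on a lattice of rank $22-\deg s$ forces $D(n') \leq 22-\deg s$, and monotonicity of $D$ under divisibility (immediate from the closed formula $D(n) = \sum\varphi(p_i^{e_i})$) yields $D(n) \leq D(n') \leq 22 - \deg s$.

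For (4), I would pick an ample class $h \in \alpha_X$ and project orthogonally onto $S \otimes \RR$ to obtain $h_S$, writing $h = h_S + h_C$ with $h_C \in C \otimes \QQ$. Negative-definiteness of $C$ gives $h_S^2 = h^2 - h_C^2 > 0$; and since $S \perp C$, any $\delta \in \Delta_S$ satisfies $\left\langle \delta, h_S \right\rangle = \left\langle \delta, h\right\rangle \neq 0$, so $h_S \in V_S$. The intersection $\alpha_X \cap (S\otimes\RR)$ is a nonempty open convex cone, hence connected, and therefore lies in a single chamber of $V_S$; this chamber is preserved by $f|S$ because $f$ preserves both $\alpha_X$ and $S$. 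The main conceptual obstacle lies in the preliminary step—pinning down $\overline{f}$-equivariance of $\phi$ and that $\overline{f}|A_C$ genuinely inherits the arithmetic of $f|C$; once this is in place, each of (1)--(4) reduces to a short bookkeeping argument.
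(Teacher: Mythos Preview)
Your treatment of (1)--(3) matches the paper's proof essentially line for line: establish $pA_S \subseteq H_S$ from the $p$-elementarity of $A_{\NS}$ (Corollary~\ref{cor:structure_glue_quotient}), invoke Theorem~\ref{thm:resultant} for the feasible-prime constraint, and transport $\overline{f}|pA_S$ through $\phi$ to $H_C \subseteq A_C$, where it is governed by the minimal polynomial and order of $f|C$. The paper's proof of (4) is a bare assertion that positivity restricts to $S$; your attempt to justify this is welcome but contains a slip.

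The claim that $\alpha_X \cap (S \otimes \RR)$ is nonempty is unwarranted and generally false: if $C$ contains a root $\delta$ (which is entirely compatible with positivity of $f$ on $\NS$, e.g.\ the summand $(E_8,\id)$ used repeatedly in the paper), then every $h \in S \otimes \RR$ satisfies $\langle h,\delta\rangle = 0$, so $h \notin V_{\NS} \supseteq \alpha_X$ and the intersection is empty. The repair is already present in your first two sentences: argue with the \emph{projection} $\pi_S(\alpha_X)$ rather than the intersection. You have shown $\pi_S(h)=h_S \in V_S$ for every $h \in \alpha_X$; since $\alpha_X$ is connected and $\pi_S$ continuous, $\pi_S(\alpha_X)$ lies in a single chamber $\beta_S$ of $V_S$. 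Because $f$ preserves both $\alpha_X$ and the orthogonal splitting $S \oplus C$, it commutes with $\pi_S$, whence $f|S$ maps $\pi_S(\alpha_X)$ into itself; as $f|S$ permutes the chambers of $V_S$, it must fix $\beta_S$.
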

\begin{proof}
\begin{itemize}
\item[(1)] By Corollary \ref{cor:structure_glue_quotient}, we have $pA_S \subseteq H_S$, while Theorem $\ref{thm:resultant}$ implies that only feasible primes divide $|H_S|$.
\item[(2)] The order $n$ of $f|C$ satisfies $D(n)\leq 22- \deg(s)$ and it is a multiple of the order of $\overline{f}|H_C$, which in turn is a multiple of the order on $pA_S \subset H_S \cong H_C$.
\item[(3)] The isomorphism $H_S\cong H_C$ is compatible with $f$. Let $\mu$ be the minimal polynomial of $f|C$. It is a product of different cyclotomic polynomials because $f$ acts semisimply on $\NS$. Then $\mu(f)$ vanishes on $C$ and consequently on $A_C$. By compatibility of the action it vanishes on $H_C\cong H_S\supseteq p A_S$ as well. 
\item[(4)] $f$ is itself positive (on $\NS$), and therefore so is any restriction.
\end{itemize}
\end{proof}

The following is a partial converse to Proposition \ref{prop:perfect_power}, which is enough to realize all minimal Salem numbers as entropies of automorphisms of supersingular K3 surfaces.

\begin{theorem}\label{thm:realized}
 Let $N$ be a supersingular K3 lattice of determinant $-p^{2\sigma}$, $p\geq 5$, $1 \leq \sigma \leq 10$. Let
 $f\in \mathcal{O}(N)$ be such that
 \begin{itemize}
  \item[(1)] $f$ is positive, and
  \item[(2)] the characteristic polynomial $\chi_{\overline{f}|A_{N}}$ is irreducible.
 \end{itemize}
 Then there is a supersingular K3 surface $X$, an isometry $\eta: N \rightarrow \NS(X)$ and $F\in \Aut(X)$ such that
 \[f=\eta^{-1} F^*\eta.\]
\end{theorem}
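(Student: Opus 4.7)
The plan is to apply Corollary \ref{cor:Strong_Torelli}. Since $f$ is positive by hypothesis, it suffices to produce a strictly characteristic subspace $P \subseteq A_N \otimes \kappa$ with $\overline{f}(P) = P$. Guided by Lemma \ref{lem_characteristic_line}, which recovers every strictly characteristic $P$ as $l + \psi^{-1}(l) + \cdots + \psi^{-(\sigma-1)}(l)$ from its characteristic line $l$, I will construct $P$ by choosing $l$ to be an eigenline of $\overline{f}$ on $A_N \otimes \kappa$.

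Since $\chi_{\overline{f}}$ is irreducible over $\mathbb{F}_p$ of degree $2\sigma$, the isometry $\overline{f}$ diagonalizes on $A_N \otimes \kappa$ with $2\sigma$ distinct eigenvalues forming a single Frobenius orbit $\{\lambda^{p^i} : 0 \leq i < 2\sigma\}$, with one-dimensional eigenspaces $E_{\lambda^{p^i}}$. Set $l := E_\lambda$ and $P := \sum_{i=0}^{\sigma-1} \psi^{-i}(l) = \bigoplus_{i=0}^{\sigma-1} E_{\lambda^{p^{-i}}}$. Invariance $\overline{f}(P) = P$ is immediate because $\overline{f}$ commutes with $\psi$ and preserves each eigenline. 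The dimension count $\dim P = \sigma$ is clear, and $P + \psi(P)$ differs from $P$ by the single additional eigenline $\psi(l) = E_{\lambda^p}$, yielding $\dim(P + \psi(P)) = \sigma + 1$.

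The substantive step is to verify that $P$ is totally isotropic (and strictly characteristic). Since $\overline{f}$ is an isometry of the non-degenerate bilinear form on $A_N$, the eigenvalue set is stable under $\mu \mapsto \mu^{-1}$; writing $\lambda^{-1} = \lambda^{p^k}$, inversion acts on the Frobenius orbit as the shift $i \mapsto i + k$. Being an involution this forces $2k \equiv 0 \pmod{2\sigma}$, and $k = 0$ is excluded because irreducibility of degree $2\sigma \geq 2$ prevents $\lambda = \pm 1$. Hence $k = \sigma$, so the eigenspace paired with $E_{\lambda^{p^i}}$ by the bilinear form is $E_{\lambda^{p^{i+\sigma}}}$. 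The exponent set $\{0, \sigma+1, \sigma+2, \ldots, 2\sigma-1\}$ defining $P$ is disjoint from its reciprocal $\{\sigma, \sigma-1, \ldots, 1\}$, whence $P$ is totally isotropic, and the same disjointness gives $P + \psi^\sigma(P) = A_N \otimes \kappa$, confirming strictness. Corollary \ref{cor:Strong_Torelli} then yields the required $X$, $\eta$, and $F$. I expect no serious obstruction beyond this reciprocity bookkeeping: irreducibility of $\chi_{\overline{f}}$ supplies just enough structure to make the reciprocal involution a fixed-point-free shift by exactly $\sigma$, which canonically splits the $2\sigma$ eigenlines into two totally isotropic halves.
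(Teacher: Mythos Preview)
Your proof is correct and follows essentially the same approach as the paper: both construct $P$ as the span of $\sigma$ consecutive $\psi$-translates of an $\overline{f}$-eigenline and deduce isotropy from the fact that the involution $\mu \mapsto \mu^{-1}$ on the Frobenius orbit of eigenvalues is the shift by $\sigma$. The only cosmetic differences are that the paper uses forward $\psi$-iterates while you use $\psi^{-i}$ (following Lemma~\ref{lem_characteristic_line}), and the paper phrases the isotropy check as the direct computation $\langle \psi^n e,\psi^m e\rangle = \alpha^{p^n+p^m}\langle \psi^n e,\psi^m e\rangle$ rather than via your explicit identification of the reciprocal involution.
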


\begin{proof}
 Choose an eigenvector $e \in A_{\NS}\otimes \kappa$ of $\overline{f}$ with eigenvalue $\alpha$. Since the characteristic polynomial is irreducible,
 $\left\{\psi^i(e);0\leq i \leq 2\sigma-1\right\}$ is an eigenbasis of $\overline{f}|A_{N}\otimes \kappa$.
 We claim that $P=\kappa e+\kappa\psi(e)+\ldots+\kappa\psi^{\sigma-1}(e)$ is a strictly characteristic subspace. 
 The only non-trivial observation is that $P$ is totally isotropic. Indeed, for any $0 \leq m \leq n < \sigma$ it holds
 \[\left \langle \psi^n e,\psi^m e \right\rangle =\left\langle f(\psi^n e),f(\psi^m e) \right\rangle =\alpha^{p^{n}+p^m} \! \left\langle \psi^n e,\psi^m e \right\rangle\]
 Since $\chi_{\overline{f}|A_{N}}$ is irreducible, the order of Frobenius on the eigenvalues is $2\sigma$. In particular $\alpha^{p^n+p^m} \neq 1$ if $0\leq n-m \leq \sigma-1$, as otherwise
 \[\Fr_{\kappa}^{n-m}(\alpha)=\alpha^{-1} \implies \sigma \mid (n-m).\]
 Finally, since $P$ is preserved by $f$ by construction, Corollary \ref{cor:Strong_Torelli} gives us the final statement.
\end{proof}

Note that in the setting of the previous theorem, we get only finitely many such K3 surfaces up to isomorphism. If one only requires the minimal polynomial $\mu_{\overline{f}|A_{N}}$ to be irreducible, then the theorem remains true and instead one expects a family \cite{brandhorst:phd}. Since we do not need this case and its proof is more involved, it is omitted. 

We close this section with a finiteness result on realizable twists of a given lattice. 

\begin{proposition}\label{prop:finite number of  twists}
 Let $s(x)$ be a simple Salem polynomial and $L_0$ the principal $s(x)$-lattice. 
 Then only a finite number of twists $L_0(a)$, $a\in \mathcal{O}_k$ is realizable as Salem factor of an automorphism of a 
 supersingular K3 surface in a fixed characteristic $p$.
\end{proposition}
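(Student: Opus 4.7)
The plan is to bound the determinant $|\det L_0(a)|$ uniformly over all realizable twists in characteristic~$p$. By Lemma~\ref{lem:dual is fractional ideal}, $|\det L_0(a)| = |\det L_0| \cdot |N^K_\QQ(a)|$, so such a bound translates into a bound on $|N^K_\QQ(a)|$. Since $\mathcal{O}_k^\times$ modulo squares is finite (by Dirichlet's unit theorem) and twisting by $u^2$ yields isomorphic $s(x)$-lattices, only finitely many ideals $(a)\subseteq \mathcal{O}_k$ of bounded norm contribute non-isomorphic twists $L_0(a)$. It therefore suffices to show that the realizable Salem factors $S = \ker s\bigl(F^*|\NS(X)\bigr)$ fall into finitely many isomorphism classes of $s(x)$-lattices.

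First I would enumerate the ambient data. The Artin invariant $\sigma$ lies in $\{1,\ldots,10\}$, so only ten isomorphism classes of supersingular K3 lattices $N=\NS(X)$ occur. The characteristic polynomial of $F^*$ factors as $s(x)c(x)$ with $c(x)$ a product of cyclotomic polynomials of total degree $22 - d$; there are only finitely many such products.

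The key step would be to prove that, for a fixed lattice $N$ of rank $22$ and a fixed monic polynomial $\chi(x) = s(x)c(x) \in \ZZ[x]$, the $O(N)$-conjugacy classes of isometries $f \in O(N)$ with $\chi_f = \chi$ form a finite set. Each such $f$ endows $N$ with the structure of an $\mathcal{O}$-lattice, where $\mathcal{O} = \ZZ[x]/\chi(x)$ is an order in the semisimple $\QQ$-algebra $\QQ[x]/\chi(x)$, and two isometries determine the same $O(N)$-conjugacy class precisely when the associated $\mathcal{O}$-lattice structures are isomorphic. The Jordan--Zassenhaus theorem bounds the number of isomorphism classes of torsion-free $\mathcal{O}$-modules of given $\ZZ$-rank, and the compatibility with the integral bilinear form refines this to only finitely many classes of $\mathcal{O}$-lattices, via the classical genus-theoretic finiteness for lattices equipped with auxiliary algebraic structure. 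Combining the three finiteness statements yields finitely many pairs $(N,f)$ up to isomorphism, hence finitely many $S = \ker s(f|N)$ up to $s(x)$-lattice isomorphism. Since $s(x)$ is simple, every such $S$ is a twist $L_0(a)$ of the principal lattice by \cite[Theorem 5.2]{mcmullen:minimum}, completing the argument.

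The main obstacle is the finiteness of $O(N)$-conjugacy classes of isometries with prescribed characteristic polynomial. This finiteness is a standard consequence of Jordan--Zassenhaus together with the finiteness of genera of lattices with isometry, but requires careful handling of the interplay between the $\mathcal{O}$-module structure on $N$ and the integral symmetric form.
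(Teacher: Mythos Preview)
Your approach is genuinely different from the paper's. The paper argues directly on the twist~$a$: by Theorem~\ref{thm:realizability_conditions}, only the feasible primes and~$p$ can divide $|A_{L_0(a)}|$, so only finitely many prime ideals of~$\mathcal{O}_K$ occur in~$a$; the exponents are then bounded because the order of $f$ on $pA_{L_0(a)}\cong \mathcal{O}_K/\mathfrak{p}^{l-e}$ grows like $p^{\lceil (l-n)/e\rceil}$ by an explicit $p$-adic logarithm computation (Lemma~\ref{lem:exponential growth}), while Theorem~\ref{thm:realizability_conditions}(2) forces this order to stay below a fixed bound. This is effective and is exactly what is used in step~(3) of the strategy in~§8.

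Your route via global finiteness of conjugacy classes can be made to work, but there is a genuine gap as written. You assert that $\QQ[x]/\chi(x)$ is semisimple; this fails whenever $c(x)$ contains a repeated cyclotomic factor, which certainly occurs (e.g.\ $c(x)=(x-1)^{22-d}$). Jordan--Zassenhaus therefore does not apply to $\ZZ[x]/\chi(x)$. The fix is to work with the \emph{minimal} polynomial of~$f$, which is separable because $f|C$ has finite order ($C$ is negative definite, so $O(C)$ is finite); there are only finitely many possible minimal polynomials dividing each~$\chi$. After this correction the relevant finiteness is that of isomorphism classes of hermitian $\ZZ[x]/\mu(x)$-lattices (for the involution $x\mapsto x^{-1}$) of given rank and signature, which is indeed known but is not a one-line consequence of Jordan--Zassenhaus; you would need to invoke genus and class-number finiteness for hermitian lattices over orders, and you should cite a precise reference rather than calling it ``classical''. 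Even granting all this, your argument is non-effective, whereas the paper needs (and obtains) an explicit finite search space for the algorithmic enumeration of realizable twists.
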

\begin{proof}
Since the associates of $a\in \mathcal{O}_k$ define only finitely many non-isomorphic $s(x)$-lattices, it suffices to bound the possible prime factorizations of $a$ in $\mathcal{O}_K$ such that
\[L_0(a)\cong \ker s(f|\NS)\]
where $f \in \Aut(X)$ of a supersingular K3 surface $X$ in characteristic $p$.
According to Lemma \ref{lem:dual is fractional ideal}, we can find an ideal $I<\mathcal{O}_K$ such that
\[A_{L_0(a)} \cong  \mathcal{O}_K / I\]
as $\mathbb{Z}\left[f\right]=\mathcal{O}_K$-modules.
By Theorem \ref{thm:realizability_conditions} $|A_{L_0(a)}|$ is divisible at most by the feasible primes and $p$.
Thus only finitely many prime ideals $\mathfrak{p}$ are possible divisors of $I$ and hence of $a\mathcal{O}_K$.
By Theorem \ref{thm:realizability_conditions} the order of $f|pA_{L_0(a)}$ is bounded. 
We view $pA_{L_0(a)}$ as an ideal of $\mathcal{O}_K/I$. Using the Chinese remainder theorem we can reduce to the case that
$I=\mathfrak{p}^l$ has a single prime divisor and 
\[pA_{L_0(a)} \cong p\left(\mathcal{O}_K/\mathfrak{p}^l\right)=\mathfrak{p}^e/\mathfrak{p}^l \cong \mathcal{O}_K/\mathfrak{p}^{l-e}\]
for some fixed $e\in \mathbb{N}$ independent of $l$. 
Looking at Lemma \ref{lem:exponential growth} below we see that the order of $f$ on $\mathcal{O}_K/\mathfrak{p}^{l-e}$ grows exponentially in $l$, proving that $l$ is bounded above as wanted. 
\end{proof}

In the above proof we needed to control the order of an automorphism of $\mathcal{O}_K/\mathfrak{p}^n$. We may replace $\mathcal{O}=\mathcal{O}_K$ by its completion $\hat{\mathcal{O}}$ at $\mathfrak{p}$ since $\mathcal{O}/\mathfrak{p}^l\mathcal{O} \cong \hat{\mathcal{O}}/\hat{\mathfrak{p}}^l\hat{\mathcal{O}}$. We can thus use the following elementary results from the theory of $p$-adic numbers. Let $K$ be a finite extension of $\mathbb{Q}_p$, $\mathcal{O}$ its ring of integers with maximal ideal $\mathfrak{p}$, and $\nu_{\mathfrak{p}}$ the corresponding normalized valuation. Let $e$ be the ramification index of $p$, that is, $p\mathcal{O}=\mathfrak{p}^e$.

\begin{proposition}\cite[II Prop. 3.10 and 5.5]{neukirch:ant} \label{prop:neukirch exp iso}
Let $U^{(n)}=1+\mathfrak{p}^n \subset \mathcal{O}^\times$. Then
 \[\mathcal{O}^\times/U^{(n)} \cong (\mathcal{O}/\mathfrak{p}^n)^\times\]
 for $n\geq 1$. Furthermore, the power series
 \[\exp(x)=1+x+\frac{x^2}{2!}+\frac{x^3}{3!}+\cdots \quad \mbox{and} \quad \log(1+x)=x-\frac{x^2}{2}+\frac{x^3}{3}-\cdots,\]
 yield, for $n>\frac{e}{p-1}$, two mutually inverse isomorphisms
 \[\mathfrak{p}^n \rightleftarrows U^{(n)}.\]
\end{proposition}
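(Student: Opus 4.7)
The statement has two independent parts, so I would treat them separately. The first isomorphism is a purely algebraic statement about reductions of units in a local ring, while the second is an analytic statement relying on $p$-adic convergence estimates for the exponential and logarithm series. Both are classical but the heart of the matter is Legendre's formula for $\nu_p(k!)$.

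\textbf{The reduction isomorphism.} Consider the reduction homomorphism $r_n \colon \mathcal{O}^{\times} \to (\mathcal{O}/\mathfrak{p}^n)^{\times}$ induced by the projection $\mathcal{O} \twoheadrightarrow \mathcal{O}/\mathfrak{p}^n$. By construction $\ker r_n = \{u \in \mathcal{O}^{\times} \mid u \equiv 1 \bmod \mathfrak{p}^n\} = U^{(n)}$. For surjectivity I would lift an arbitrary unit $\bar{u} \in (\mathcal{O}/\mathfrak{p}^n)^{\times}$ to some $u \in \mathcal{O}$. Since $\bar{u}$ is a unit modulo $\mathfrak{p}^n$, its reduction modulo $\mathfrak{p}$ is nonzero, so $u \notin \mathfrak{p}$. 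As $\mathcal{O}$ is a local ring with unique maximal ideal $\mathfrak{p}$, any such $u$ is a unit in $\mathcal{O}$, so $u \in \mathcal{O}^{\times}$ and $r_n(u) = \bar{u}$. This gives the first isomorphism.

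\textbf{Convergence of $\exp$ and $\log$.} The key estimate is Legendre's formula $\nu_p(k!) = (k - s_p(k))/(p-1) \leq (k-1)/(p-1)$, where $s_p(k)$ is the digit sum of $k$ in base $p$. Since $p\mathcal{O} = \mathfrak{p}^e$, this gives $\nu_{\mathfrak{p}}(k!) \leq e(k-1)/(p-1)$. For $x \in \mathfrak{p}^n$ with $n > e/(p-1)$ I would compute
\[
\nu_{\mathfrak{p}}\!\left(\tfrac{x^k}{k!}\right) \geq kn - \tfrac{e(k-1)}{p-1} = n + (k-1)\!\left(n - \tfrac{e}{p-1}\right) \geq n,
\]
with strict inequality for $k \geq 2$, so $\exp(x)$ converges and lies in $1 + \mathfrak{p}^n = U^{(n)}$. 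An analogous bound $\nu_{\mathfrak{p}}(x^k/k) \geq n + (k-1)(n - e/(p-1)) \geq n$ (using $\nu_{\mathfrak{p}}(k) \leq e \log_p k$) shows $\log(1+x)$ converges and lies in $\mathfrak{p}^n$ for $1 + x \in U^{(n)}$. Hence $\exp$ and $\log$ define well-defined maps between $\mathfrak{p}^n$ and $U^{(n)}$.

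\textbf{Mutual inverses and main obstacle.} To see that the two maps are mutually inverse I would invoke the classical formal identities $\exp(\log(1+X)) = 1+X$ and $\log(\exp(X)) = X$ in $\mathbb{Q}[[X]]$, which are purely algebraic. The substitution of one convergent $p$-adic series into another is justified once all intermediate double sums converge absolutely (in the $p$-adic sense), which again follows from the same estimates above. The exp/log maps are also group homomorphisms because the identities $\exp(x+y) = \exp(x)\exp(y)$ and $\log((1+x)(1+y)) = \log(1+x)+\log(1+y)$ are formal power-series identities that survive passage to the limit. The main technical obstacle is precisely the bookkeeping in these convergence-of-substitution arguments: one has to verify that the double series obtained by inserting $\log(1+x)$ (respectively $\exp(x)-1$) into $\exp$ (respectively $\log$) can be rearranged in the $\mathfrak{p}$-adic topology, which ultimately reduces to checking that each term has valuation tending to $+\infty$; this follows cleanly once the basic bound $\nu_{\mathfrak{p}}(x^k/k!), \nu_{\mathfrak{p}}(x^k/k) \geq n + (k-1)(n - e/(p-1))$ is in hand.
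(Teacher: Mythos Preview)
Your proof is correct and follows the standard approach (Legendre's formula for $\nu_p(k!)$ to control convergence, then the formal identities over $\mathbb{Q}[[X]]$). Note, however, that the paper does not actually prove this proposition: it is quoted verbatim from Neukirch \cite[II, Prop.~3.10 and 5.5]{neukirch:ant} and used as a black box, so there is nothing in the paper to compare against beyond the citation itself. One minor quibble: the parenthetical justification ``using $\nu_{\mathfrak{p}}(k)\leq e\log_p k$'' is not quite the inequality that yields your stated bound $\nu_{\mathfrak{p}}(x^k/k)\geq n+(k-1)(n-e/(p-1))$; what you actually need (and what holds) is $\nu_p(k)\leq (k-1)/(p-1)$, which follows directly from $k\geq p^{\nu_p(k)}$.
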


\begin{lemma}\label{lem:order in 1+p}
 In the setting of the preceding proposition let $f\in U^{(n)}\setminus U^{(n+1)}$.
 For $l\geq n >\frac{e}{p-1}$ the order of $f$ in $U^{(n)}/U^{(l)}$ is $p^{\left \lceil{\frac{l-n}{e}}\right \rceil}$.
\end{lemma}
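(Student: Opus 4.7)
My plan is to transfer the multiplicative problem to an additive one via the $p$-adic logarithm. Since $l \geq n > \tfrac{e}{p-1}$, Proposition \ref{prop:neukirch exp iso} gives mutually inverse group isomorphisms $\log : U^{(m)} \xrightarrow{\sim} \mathfrak{p}^m$ for every $m \geq n$ (not just $m = n$), so in particular $\log$ identifies the filtration $U^{(n)} \supseteq U^{(n+1)} \supseteq \cdots \supseteq U^{(l)}$ with $\mathfrak{p}^n \supseteq \mathfrak{p}^{n+1} \supseteq \cdots \supseteq \mathfrak{p}^l$. Hence $\log$ induces a group isomorphism
\[
U^{(n)}/U^{(l)} \;\xrightarrow{\sim}\; \mathfrak{p}^n/\mathfrak{p}^l,
\]
and the hypothesis $f \in U^{(n)} \setminus U^{(n+1)}$ translates to $\alpha := \log(f) \in \mathfrak{p}^n \setminus \mathfrak{p}^{n+1}$, i.e.\ $\nu_{\mathfrak{p}}(\alpha) = n$.

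Under this isomorphism the multiplicative order of $f$ in $U^{(n)}/U^{(l)}$ equals the additive order of $\alpha$ in $\mathfrak{p}^n/\mathfrak{p}^l$, namely the smallest positive integer $k$ with $k\alpha \in \mathfrak{p}^l$. Now $\nu_{\mathfrak{p}}(k\alpha) = \nu_{\mathfrak{p}}(k) + n$, and since $k \in \mathbb{Z}$ and $p\mathcal{O} = \mathfrak{p}^e$, we have $\nu_{\mathfrak{p}}(k) = e \cdot v_p(k)$, where $v_p$ denotes the usual $p$-adic valuation on $\mathbb{Z}$. Thus the condition $k\alpha \in \mathfrak{p}^l$ becomes
\[
e \cdot v_p(k) \;\geq\; l - n, \qquad \text{i.e.} \qquad v_p(k) \;\geq\; \left\lceil \tfrac{l-n}{e} \right\rceil.
\]

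The smallest positive integer $k$ satisfying this inequality is exactly $k = p^{\lceil (l-n)/e \rceil}$, which yields the claimed order. The only subtlety to verify carefully is that $\log$ is compatible with the whole filtration (so that the quotient statement makes sense and $\nu_{\mathfrak{p}}(\log(f)) = n$), but this is immediate from applying Proposition \ref{prop:neukirch exp iso} at each level $m \geq n$. Everything else is a straightforward bookkeeping of valuations, so I do not anticipate any real obstacle.
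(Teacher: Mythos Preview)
Your proof is correct and follows essentially the same approach as the paper: both transfer the question to the additive group via the $p$-adic logarithm, use that $\nu_{\mathfrak{p}}(\log f)=n$, and then determine the smallest $k$ with $\nu_{\mathfrak{p}}(k)\geq l-n$. Your justification of $\nu_{\mathfrak{p}}(\log f)=n$ via compatibility of $\log$ with the filtration at each level $m\geq n$ is slightly more explicit than the paper's appeal to the proof of Proposition~\ref{prop:neukirch exp iso}, but the argument is otherwise the same.
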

\begin{proof}
 By assumption $l\geq n>\frac{e}{p-1}$ so the order of $f$ in $U^{(n)}/U^{(l)}$ is that of
 $\log(f)$ in $\mathfrak{p}^n/\mathfrak{p}^l$. Write $f=1+z$ for $z\in\mathfrak{p}^n$.
 It follows from the proof of Proposition \ref{prop:neukirch exp iso} that
 \[\nu_\mathfrak{p}(\log(1+z))=\nu_\mathfrak{p}(z)=n.\]
 Note that $kz \equiv 0 \mod \mathfrak{p}^l$ if and only if $l\leq\nu_\mathfrak{p}(kz)=\nu_\mathfrak{p}(k)+n$ if and only if
 $l-n \leq \nu_\mathfrak{p}(k)$. The smallest such $k \in \mathbb{N}$ is the order of $f$ in $\left(\mathcal{O}/\mathfrak{p}^l\right)^\times$. It equals
 $p^{\left\lceil{\frac{l-n}{e}}\right\rceil}.$ 
\end{proof}

\begin{lemma}\label{lem:exponential growth}
For $f\in \mathcal{O}^\times$ denote by $o(f,l)$ the order of $\overline{f} \in (\mathcal{O}/\mathfrak{p}^l)^\times$.
If $l\geq n>\frac{e}{p-1}$, where $f^{o(f,1)}\in U^{(n)}\setminus U^{(n+1)}$, then
\[o(f,l)=o(f,1)p^{\left \lceil{\frac{l-n}{e}}\right \rceil}.\]
\end{lemma}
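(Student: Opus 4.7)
The plan is to reduce the computation of $o(f,l)$ to the situation of the previous lemma, which already handles the order in the principal units $U^{(n)}/U^{(l)}$. Set $k := o(f,1)$, so that $f^k \in U^{(1)} = 1 + \mathfrak{p}$ and, by hypothesis, $f^k$ lies in $U^{(n)} \setminus U^{(n+1)}$.

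First I would argue that $k$ divides $o(f,l)$. Indeed, the reduction map $(\mathcal{O}/\mathfrak{p}^l)^\times \twoheadrightarrow (\mathcal{O}/\mathfrak{p})^\times$ (factoring through the isomorphism $\mathcal{O}^\times/U^{(l)} \cong (\mathcal{O}/\mathfrak{p}^l)^\times$ from Proposition \ref{prop:neukirch exp iso}) is a surjective group homomorphism sending $\overline{f}$ to an element of order $k$, so the order of $\overline{f}$ in $(\mathcal{O}/\mathfrak{p}^l)^\times$ must be a multiple of $k$. Write $o(f,l) = k\cdot m$; then $m$ is characterized as the smallest positive integer such that $(f^k)^m \equiv 1 \pmod{\mathfrak{p}^l}$, i.e.\ as the order of $f^k$ in the quotient $U^{(n)}/U^{(l)}$ (the subgroup inclusion $f^k \in U^{(n)}$ is precisely the hypothesis).

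Now I would apply Lemma \ref{lem:order in 1+p} directly to $f^k \in U^{(n)}\setminus U^{(n+1)}$, with $l \geq n > e/(p-1)$. That lemma gives
\[
m \;=\; p^{\left\lceil \tfrac{l-n}{e}\right\rceil},
\]
and multiplying by $k = o(f,1)$ yields the claimed formula
\[
o(f,l) \;=\; o(f,1)\, p^{\left\lceil \frac{l-n}{e}\right\rceil}.
\]

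There is no serious obstacle here: the proof is a two-step reduction, first peeling off the tame (prime-to-$p$) part coming from the residue field via divisibility by $k$, then invoking the preceding lemma for the wild part inside the $p$-adic principal units. The only point requiring a little care is to justify that $k \mid o(f,l)$ and that $m$ coincides with the order computed in $U^{(n)}/U^{(l)}$; both follow formally from Proposition \ref{prop:neukirch exp iso} and the definition of $n$.
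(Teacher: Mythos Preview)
Your proof is correct and follows essentially the same approach as the paper: both reduce to computing the order of $f^{o(f,1)}$ in $U^{(n)}/U^{(l)}$ and then invoke Lemma~\ref{lem:order in 1+p}. Your version is slightly more explicit in justifying the divisibility $k \mid o(f,l)$ via the reduction map, whereas the paper simply states it and uses the standard order-of-a-power formula $o(f^\alpha,l)=\lcm(o(f,l),\alpha)/\alpha=o(f,l)/\alpha$.
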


\begin{proof}
Let $\alpha =o(f,1)$. With $\alpha \mid o(f,l)$, we get that
$o(f^\alpha,l)=\lcm(o(f,l),\alpha)/\alpha=o(f,l)/\alpha$.
Thus, after replacing $f$ by $f^\alpha$, 
the conditions of Lemma $\ref{lem:order in 1+p}$ are fulfilled and the order of $f$ is $\alpha p^{\left\lceil{\frac{l-n}{e}}\right\rceil}$ as claimed. 
\end{proof}

\section{Realized Salem numbers}
We summarize now the strategy that can be followed to realize a (simple) Salem number $\lambda$ as the exponential of the entropy of an automorphism of a supersingular K3 surface. Although it is basically the same strategy that McMullen follows in \cite{mcmullen:minimum}, we explicitly include it since the positive characteristic $p$ introduces some new features that have to be controlled at some steps.

Let $s(x)$ be the minimal polynomial of $\lambda$, of degree $d$, and $r(y)$ be the corresponding trace polynomial, then the field $K=\mathbb{Q}[x]/s(x)$ is a quadratic extension of $k=\mathbb{Q}[y]/r(y)$. If $s$ is simple, then $\mathcal{O}_K=\mathbb{Z}[x]/s(x)$ has class number one, and furthermore $\mathcal{O}_k=\mathbb{Z}[y]/r(y)$. For simplicity we assume also that $h(k)=1$, since this is the case for every Salem number we are interested in.
For $h(k)>1$ the arguments can be adapted.
\begin{enumerate}
\item Construct the principal isometry $f_0: L_0 \rightarrow L_0$ with characteristic polynomial $s(x)$.
\item Compute the set $P$ consisting of the primes in $\O_k$ lying over the feasible primes for $s(x)$ and add to $P$ the primes in $\O_k$ above the characteristic $p$ of the prime field.
\item Let $A$ be the set consisting of those $a\in \O_k$ which are a product of the primes in $P$ and satisfy $D(n)\leq 22-d$, where $n$ is the order of $\overline{f_0}|pA_{L_0(a)}$. This set is finite in virtue of Proposition \ref{prop:finite number of  twists}.
\item Replace $A$ with the subset of those $a \in A$ which satisfy $\mu(\overline{f_0}|pA_{L_0(a)})=0$ for some product $\mu$ of distinct cyclotomic polynomials of degree at most $22-d$.
\item If $p$ is not feasible, keep only those $a \in A$ such that the minimal polynomial of $\overline{f_0}|(A_{L_0(a)})_p$ is irreducible in $\FF_p[x]$.
\item Denote by $U\subseteq \O_k^\times$ a set of representatives of ${\O_k^\times / \O_k^{\times 2}}$ and replace $A$ with the set of those $au\in AU$ such that the signature of $L_0(au)$ is $(1,d-1)$.
\item Replace $A$ with the subset of those $a\in A$ such that $f_0|L_0(a)$ is positive by the quadratic positivity test.
\item Find an $a\in A$, a negative definite lattice $C$ of rank $22-d$ and a positive $f_C \in O(C)$ that can be glued to $(L_0(a),f_0)$ to obtain a positive isometry of a supersingular K3 lattice.
\end{enumerate}
Steps (1)-(7) are easily implemented on a computer algebra system. 
Although step (8) is finite in principle, computations are only feasible for small ranks of $C$. Indeed, at this point we have only a finite number of possibilities for the genus of $C$, and each genus contains only a finite number of classes and each class has only a finite number of isometries. Each of these enumerations can be obtained explicitly (there are implementations for example in Magma), but computation times grow rapidly with the rank of $C$.\\

To illustrate our results we apply the strategy above to determine which minimal Salem numbers $\lambda_d$ are realized in characteristic $5$. The reason to choose $5$ is that it is the smallest for which the Torelli theorems are available. In principle any other $p>3$ is possible.
The constructions are mostly carried out with a package developed by the first author for the computer algebra system SageMath \cite{sage}, while computations for positivity are done with SCIP \cite{scip} and CPLEX \cite{cplex}. The resulting matrices are included in the supplementary data. 

Following McMullen, we represent a gluing between two (or more) lattices by a diagram. Each node consists of an invariant sublattice. If the sublattices corresponding to two nodes are glued, they are joined by an edge which is decorated by the cardinality of the glue. 
  
\begin{theorem}
 The value $\log \lambda_d$ arises as the entropy of an automorphism of a supersingular K3 surface over a field of characteristic $p=5$ if and only if $d\leq 22$ is even and $d\neq 18$. 
\end{theorem}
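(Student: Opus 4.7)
The necessary direction is immediate: a Salem polynomial has even degree, so $d$ must be even, and since $s(x)$ divides the characteristic polynomial of $f = F^*$ acting on the rank-$22$ lattice $\NS(X)$, we must have $d = \deg s(x) \leq 22$. The substantive content is therefore twofold: realize $\log \lambda_d$ for each even $d \in \{2,4,6,8,10,12,14,16,20,22\}$, and rule out $d = 18$ in characteristic $5$.

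For existence, I plan to proceed case by case, applying the eight-step strategy of Section \ref{sec:realizability}. Starting from the principal $s_d(x)$-lattice $(L_0, f_0)$, I would enumerate the finite set of twists $L_0(a)$ with $a \in \mathcal{O}_k$ supported on primes above the feasible primes for $s_d$ and above $5$; finiteness is Proposition \ref{prop:finite number of  twists}. Those surviving the constraints of Theorem \ref{thm:realizability_conditions}---the correct order of $\overline{f_0}$ on $pA_{L_0(a)}$, an annihilating product of distinct cyclotomic polynomials of degree at most $22-d$, hyperbolic signature $(1,d-1)$ after a unit twist, and positivity via the quadratic test of Theorem \ref{thm:quadratic_test}---produce candidate Salem factors $(S, f_S)$. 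For each such candidate and each Artin invariant $1 \leq \sigma \leq 10$, I would then search for a negative-definite rank $(22-d)$ lattice $C$ carrying an isometry $f_C$ whose characteristic polynomial $c(x)$ is a product of cyclotomic polynomials of complementary degree, together with a glue map $H_S \xrightarrow{\phi} H_C$ compatible with $\overline{f_S}$ and $\overline{f_C}$, such that $S \oplus_\phi C$ is a supersingular K3 lattice of determinant $-5^{2\sigma}$. By Corollary \ref{coro:positivity_finite_part_not_important} the choice of $f_C$ does not affect positivity of the extended isometry, so positivity need only be checked on $S$. Once a positive isometry is obtained, a strictly characteristic subspace constructed as in the proof of Theorem \ref{thm:realized} (or by direct analysis of $\overline{f}$ on $A_{\NS}$) combined with Corollary \ref{cor:Strong_Torelli} yields the automorphism $F \in \Aut(X)$. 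The case $d=22$ is already handled in \cite{brandhorst:salem22,schuett:salem22} with $C = 0$.

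For the exclusion of $d = 18$, I would carry out an analogous exhaustive enumeration using Theorem \ref{thm:realizability_conditions}. Since $\deg c = 4$, the cyclotomic factor $c(x)$ ranges over the short list of products $\prod c_{n_i}$ with $\sum \varphi(n_i) = 4$. For each such $c(x)$, each Artin invariant $\sigma$, each twist $L_0(a)$ passing conditions (1)--(3), and each isometry class of negative-definite rank-$4$ lattice $C$ of the appropriate genus admitting an isometry of the required type, one would verify that no compatible glue $H_S \cong H_C$ produces a supersingular K3 lattice with positive extended isometry in characteristic $5$. All four layers of this enumeration are finite by Proposition \ref{prop:finite number of  twists}, standard finiteness of genera, and finiteness of isometry groups of definite lattices, so the search terminates.

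The hard part will be the combinatorial explosion of the gluing step (8): the number of compatible glue maps grows with $\sigma$ and with the rank of $C$, and the quadratic positivity test must be rerun on each resulting overlattice. The \emph{necessity} half of Theorem \ref{thm:quadratic_test} is indispensable for the $d = 18$ exclusion, since a merely sufficient positivity test could spuriously reject candidates and leave open the possibility that $\log \lambda_{18}$ is realizable. The contrast with the complex case, where $d = 18$ \emph{is} realized, suggests that the obstruction in characteristic $5$ must emerge from the rigidity of the supersingular setting: once $p$ is fixed, the only freedom in $\NS$ is the Artin invariant $\sigma$, and Proposition \ref{prop:perfect_power} imposes strong irreducibility constraints on $\chi_{\overline{f}}$ that have no counterpart over $\CC$.
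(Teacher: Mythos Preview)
Your existence strategy is essentially the paper's: case-by-case construction via the eight-step recipe, then Theorem~\ref{thm:realized}. Two small corrections. First, Corollary~\ref{coro:positivity_finite_part_not_important} does \emph{not} reduce positivity to the Salem factor~$S$; it only says the particular choice among positive $f_C$'s is irrelevant. Positivity must still be verified on the full glued lattice~$N$, since roots of $N$ not lying in $S\oplus C$ can obstruct. Second, the paper treats $d=2,4$ by an entirely different route---reducing explicit automorphisms of the complex Kummer surface $\mathrm{Km}(E_{\zeta_3}\times E_{\zeta_3})$ modulo~$5$---thereby avoiding the rank-$18$ or rank-$20$ search for~$C$.

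The genuine gap is in your exclusion of $d=18$. You propose to show that no glue $H_S\cong H_C$ yields a supersingular K3 lattice carrying a \emph{positive} extended isometry, and you assert that the necessity half of Theorem~\ref{thm:quadratic_test} is indispensable for this. But positivity is not the obstruction the paper finds, and your criterion may simply be false: a positive isometry with Salem factor $s_{18}$ on a supersingular K3 lattice in characteristic~$5$ might well exist without arising from any automorphism. The paper's argument runs instead through Proposition~\ref{prop:perfect_power}. After steps (1)--(7) a single Salem twist (of norm~$13$) survives and the only admissible cyclotomic factor is $c_{12}$; since $5$ is not feasible there is no gluing over~$5$, and one checks that $5$ is inert in the relevant trace field but splits in the extension above it, forcing the minimal polynomial of $\overline f$ on the $5$-discriminant group to be reducible. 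This contradicts Proposition~\ref{prop:perfect_power} directly, independent of positivity. You gesture at this constraint in your final sentence but do not build it into the enumeration; as stated, your search could terminate with a positive isometry and leave the case unresolved.
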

To prove the theorem we consider each minimal Salem number $\lambda_d$ separately.
\begin{proposition}
 The minimal Salem number $\lambda_{22}$ in degree $22$ is realized on a supersingular K3 surface with Artin invariant $\sigma=4$ and $\sigma=7$ in characteristic $5$. 
\end{proposition}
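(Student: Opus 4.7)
Since $\deg s_{22} = 22 = \rk \NS(X)$, there is no room for a cyclotomic factor of $\chi_f$, so $c(x) = 1$, $C = 0$, and no gluing is needed: one simply has $\NS(X) \cong L_0(a)$ for some twist of the principal $s_{22}(x)$-lattice by an element $a \in \O_k$. The plan is to exhibit, for each of the two target Artin invariants $\sigma \in \{4, 7\}$, such a twist satisfying the hypotheses of Theorem \ref{thm:realized}; the K3 surface and the automorphism will then be produced automatically.

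Following the strategy outlined in this section, I would first construct $L_0$ and $f_0$ from the (simple) Salem polynomial $s_{22}(x)$, and enumerate the finitely many candidate twists $a$ whose prime factorization in $\O_K$ is supported on the feasible primes for $s_{22}$ together with the primes above $5$ (finiteness being guaranteed by Proposition \ref{prop:finite number of  twists}). Because $22 - d = 0$, condition (2) of Theorem \ref{thm:realizability_conditions} together with the irreducibility statement of Proposition \ref{prop:perfect_power} forces $A_{L_0(a)}$ to be $5$-elementary; using Lemma \ref{lem:dual is fractional ideal} to compute $|A_{L_0(a)}| = |s_{22}(1) s_{22}(-1)| \cdot |N^K_\QQ(a)|$ then pins down the admissible norms of $a$ corresponding to $\sigma = 4$ and $\sigma = 7$. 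Twisting further by a representative of $\O_k^\times / \O_k^{\times 2}$ is used to adjust the signature to $(1, 21)$; non-neutrality of the discriminant form is automatic here, since a neutral $5$-elementary form on a lattice of signature $(1, 21)$ would yield an even unimodular overlattice of that signature, which does not exist.

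Two further conditions then have to be verified. First, since $5$ need not be feasible for $s_{22}$, one must check that the minimal polynomial of $\overline{f_0}$ on $A_{L_0(a)}$ is irreducible in $\FF_5[x]$, so that Theorem \ref{thm:realized}(2) applies; this reduces to a direct computation with $s_{22}(x) \bmod 5$. Second, one must verify positivity of $f_0$ on $L_0(a)$. Since $C = 0$, the cyclotomic part of $\chi_{f_0}$ is trivial, so there are no cyclic roots, and by Theorem \ref{thm:quadratic_test} it suffices to solve the linear program \ref{eq:linear_prog_test}, and in case the optimum equals $-2$, to enumerate the integer points of each $B_\psi$ and check that none of them is a root.

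The main obstacle is entirely computational: the enumeration of admissible twists and especially the quadratic positivity test on a rank-$22$ indefinite lattice are both demanding. Once the explicit twists $a$ corresponding to $\sigma = 4$ and $\sigma = 7$ (to be exhibited in the supplementary data) are verified to pass all the filters above, Theorem \ref{thm:realized} immediately produces the desired supersingular K3 surfaces and automorphisms realizing $\log \lambda_{22}$.
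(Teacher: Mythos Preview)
Your proposal is correct and follows essentially the same approach as the paper. The paper is simply more explicit: it notes that the principal $s_{22}$-lattice is already unimodular, that $5$ factors in $\O_k$ as $p_1p_2$ with norms $5^4$ and $5^7$, and that both primes remain inert in $\O_K$ (equivalently, $\overline{s}_{22}\in\FF_5[x]$ splits into two irreducible factors of degrees $8$ and $14$), which immediately gives the two required twists, the Artin invariants, and the irreducibility of $\chi_{\overline f}$; moreover the linear positivity test already succeeds, so the quadratic refinement is not needed here.
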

\begin{proof}
 Since the Salem factor is of degree $22$, no gluing is required.
 The principal $s_{22}$-lattice is unimodular and $5$ factors in $\O_k$ as a product of two primes $p_1p_2$ of norms $5^4$ and $5^7$.
 Both $p_1$ and $p_2$ stay prime in $\mathcal{O}_K$. Indeed, $\overline{s}_{22}$ factors modulo $5$ as a product $\overline{g}_1(x)\overline{g}_2(x)$ of irreducible polynomials $\overline{g}_i(x) \in \mathbb{F}_5[x]$ of degree $8$ and $14$. Therefore $p_i\O_K=(5,g_i(x))$ where the characteristic polynomial of $f_0|L_0(p_i)$ is $\overline{g}_i(x)$. In particular it is irreducible. 
 To conclude, one computes units $u_1,u_2\in \O_k^\times$ such that $(L_0(u_ip_i),f_0)$, $i=1,2$ are hyperbolic, and the linear positivity test confirms the positivity of both constructions. To compute the Artin invariants we use Lemma \ref{lem:dual is fractional ideal} and see that the discriminant group $A_{L_0(p_i)}$ is isomorphic to $O_K/p_i\O_K$. This is indeed a vector space with $|\det L_0 \cdot N^K_\mathbb{Q}(p_i)|=1 \cdot |N^k_\mathbb{Q}(p_i)|^2=5^8$ or $5^{14}$ elements.
 In both cases, Theorem \ref{thm:realized} provides a supersingular K3 surface over $\overline{\mathbb{F}}_5$ (of Artin invariant $4$ respectively $7$) together with an automorphism of entropy $\log \lambda_{22}$. 
\end{proof}

\begin{proposition}
 The minimal Salem number $\lambda_{20}$ of degree $d=20$ is realized in characteristic $5$ with Artin invariant $\sigma=3$ or $\sigma=7$.
\end{proposition}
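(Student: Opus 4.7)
Since $\deg s_{20}=20$, the cyclotomic factor $c(x)$ must have degree $22-20=2$, so $c(x)\in\{c_3(x),c_4(x),c_6(x)\}$. For each choice, the plan is to construct a negative definite rank-$2$ lattice $(C,f_C)$ with characteristic polynomial $c(x)$ (unique up to isomorphism among the relevant genera, since these rank-$2$ $c_n$-lattices are easily enumerated), together with a twist $(L_0(a),f_0)$ of the principal $s_{20}$-lattice, and then glue them along a suitable isomorphism $H_S\cong H_C$ to obtain a supersingular K3 lattice $N$ of determinant $-5^{2\sigma}$, with $\sigma\in\{3,7\}$. The required existence of the K3 surface and the automorphism will then follow from Corollary~\ref{cor:Strong_Torelli}, provided we verify positivity and that the glued isometry preserves a strictly characteristic subspace.

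I would first execute steps (1)--(7) of the strategy for $s_{20}$: enumerate the feasible primes for $s_{20}$ by factoring each $\res(s_{20},c_k)$ with $\varphi(k)\leq 2$ and by including the primes of $\mathcal{O}_k$ above $5$; then compute the finite list of twists $L_0(a)$ whose discriminant groups are supported on these primes, whose $5$-primary part $5A_{L_0(a)}$ carries an isometry annihilated by some product of distinct cyclotomic polynomials of degree $\leq 2$, and whose signature (after adjusting by a unit representative of $\mathcal{O}_k^\times/\mathcal{O}_k^{\times 2}$) is $(1,19)$. For each surviving twist, run the quadratic positivity test of Theorem~\ref{thm:quadratic_test} on $(L_0(a),f_0)$.

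For step (8) I would look for a glue $\phi\colon H_S\xrightarrow{\sim} H_C$ compatible with $f_0\oplus f_C$, with the constraint that the resulting overlattice is $5$-elementary of rank $22$. Since $|H|$ is divisible only by feasible primes (Theorem~\ref{thm:resultant}) and $5A_S\subseteq H_S$ (Corollary~\ref{cor:structure_glue_quotient}), the gluing essentially swallows all non-$5$ torsion of $A_S\oplus A_C$ and part of the $5$-torsion, yielding a discriminant group $A_N\cong\mathbb{F}_5^{2\sigma}$. The Artin invariant $\sigma$ is read off from $|A_N|=5^{2\sigma}$, and the two target values $\sigma=3,7$ correspond to two distinct choices of twist (analogous to the two primes of $\mathcal{O}_k$ above $5$ that appeared for $\lambda_{22}$). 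Positivity of $f_0\oplus f_C$ on $N$ follows from the positivity of $f_0$ on $L_0(a)$ together with Corollary~\ref{coro:positivity_finite_part_not_important}, since any positive $f_C$ on $C$ will do. Finally, to apply Corollary~\ref{cor:Strong_Torelli} I would exhibit an eigenvector $e\in A_N\otimes\kappa$ of $\overline{f_0\oplus f_C}$ for which $P:=\sum_{i=0}^{\sigma-1}\kappa\psi^i(e)$ is strictly characteristic (the argument of Theorem~\ref{thm:realized} applies provided the eigenvalue of $\overline{f_0}$ on the $5$-primary part has Frobenius-orbit of length $2\sigma$, which the preliminary irreducibility computations in step (5) arrange).

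The main obstacle is of a computational/arithmetic nature: showing that among the finitely many admissible twists $a\in\mathcal{O}_k$ there exist two specific ones producing $5$-elementary lattices of discriminants $-5^6$ and $-5^{14}$ together with a compatible cyclotomic partner $(C,f_C)$ of degree $2$. Matching the $\overline{f_0}$-action on $5A_{L_0(a)}$ with one of the three possible orders $3,4,6$ on $A_C$ constrains both the choice of prime ideal dividing $a$ and the choice of $c(x)$; checking that a valid glue $\phi$ exists (i.e.\ $q_S|_{H_S}=-q_C|_{H_C}\circ\phi$) is where the argument could fail for a given candidate. Once a working pair is found, verifying positivity and irreducibility of the Frobenius orbit is routine by the tests of §\ref{sec:positivity} and Theorem~\ref{thm:realized}, so all the delicacy lies in the arithmetic of the twists of $L_0$.
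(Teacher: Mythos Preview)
Your enumeration of the cyclotomic factor is incomplete, and this is a genuine gap. You write $c(x)\in\{c_3,c_4,c_6\}$, but $c(x)$ is only required to be a \emph{product} of cyclotomic polynomials of total degree $2$; the reducible possibilities $c_1^2$, $c_2^2$ and $c_1c_2=(x-1)(x+1)$ are also allowed. In the paper's proof the working choice is precisely $c(x)=(x-1)(x+1)$, which your list excludes.

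This omission is fatal rather than cosmetic. One computes $|s_{20}(1)s_{20}(-1)|=11$, and in fact $11$ is the \emph{only} feasible prime for $s_{20}$, arising from $(x+1)\mid \overline{s_{20}}$ in $\mathbb{F}_{11}[x]$. Thus any twist $S=L_0(a)$ carries an $\mathbb{F}_{11}$ summand in $A_S$ on which $\overline{f_0}$ acts as $-\id$, and the glue $H_S\cong H_C$ must live over $11$ with $\overline{f_C}$ acting as $-\id$ on an $\mathbb{F}_{11}$-line. But for $c\in\{c_3,c_4,c_6\}$ one has $c(-1)\in\{1,2,3\}$, none of which vanish modulo $11$, so $(x+1)\nmid \overline{c}$ and no such $\overline{f_C}$ exists. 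Your three candidates therefore cannot be glued to $S$ at all. (Relatedly, since $5$ is not feasible, Corollary~\ref{cor:p_not_feasible} forces $5\nmid\det C$, so there is no ``matching over the $5$-part'' to perform; the matching you describe with orders $3,4,6$ is aimed at the wrong prime.)

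The actual construction takes $C$ to be the unique negative definite rank-$2$ lattice of determinant $11$ with an isometry $f_C$ of characteristic polynomial $(x-1)(x+1)$ acting as $-\id$ on $A_C\cong\mathbb{F}_{11}$. The two Artin invariants $\sigma=3,7$ then come, as you correctly guessed, from the two primes of $\mathcal{O}_k$ above $5$ (of norms $5^3$ and $5^7$). Once you enlarge your list of $c(x)$ to include $c_1c_2$, the rest of your outline (positivity test, irreducibility on $A_N$, Theorem~\ref{thm:realized}) goes through as written.
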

\begin{proof}
 \FloatBarrier
 \begin{figure}[!htbp]
  \tikzstyle{block} = [draw, rectangle, minimum height=3em, minimum width=3em]
  \tikzstyle{virtual} = [coordinate]

  \begin{tikzpicture}[auto, node distance=2cm]
    \node [block, align=center] (S)     {$s_{20}(x)$-lattice\\
    $\FF_{11}\oplus \FF_5^6$};
    \node [virtual, right of=S](up){};
    \node [virtual, right of=up](up1){};
    \node [block, right of=up1]   (C)     {$(x^2-1)$-lattice $(C,f_C)$};
    \draw [-] (S) -- node {$11$} (C);
  \end{tikzpicture}
  \caption{Gluing for $\lambda_{20}$}\label{fig:glue20}
 \end{figure}
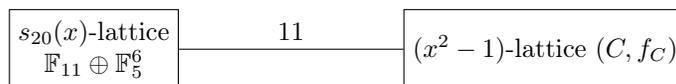 
We construct the isometry $f|\NS$ following the steps in the general strategy above and gluing together two lattices: the Salem factor $S$ and the cyclotomic factor $C$.\\

\textbf{The Salem factor.}
Note that $s_{20}$ is simple, hence $S=L_0(a)$ must be a twist of the principal $s_{20}$ lattice $L_0$, which has determinant $|\det L_0|=|s_{20}(1)s_{20}(-1)|=|-1 \cdot 11|$.
In particular, since modulo $11$
\[\overline{(x+1)} \mid \overline{s_{20}},\]
we see that $11$ is feasible, and in fact it is the only feasible prime.
Therefore the possible twists $a \in A$ must be a product of factors of 11 and $p=5$ in $\mathcal{O}_k$.
In $\O_k$ we have the factorizations $11=a_1a_2$ into two primes of norm $11$ and $11^9$, as well as $5=p_1p_2$ with norms $5^3$ and $5^7$.
On the one hand, a direct computation shows that $|A_{L_0(a_1)}|=11^3$ and $f_0|5A_{L_0(a_1)}$ is of order $22$. 
Since $D(22)=10>2=22-\deg(s_{20})$, $a$ cannot be a multiple of $a_1$, and neither of $a_2$ by the same reasoning.
On the other hand, for any invertible $u \in \O_k^\times /(\O_k^\times)^2$ the quadratic positivity test shows that $f_0|L_0(u)$ is not positive, hence $a$ must be divisible by either $p_1$ or $p_2$. Indeed, for both $p_i$ it is possible to find a unit $u_i$ such that $S=L_0(u_ip_i)$ is hyperbolic and $f_0|L_0(u_ip_i)$ is positive (the linear programming test gives $\mu(f_0|L_0(u_ip_i)) = -4$). Furthermore the $11$-primary part of the discriminant group is $\left(A_{S}\right)_{11}\cong\mathbb{F}_{11}$, the quadratic form is given by $\left(q_S\right)_{11}\left(\overline{x}\right) = \frac{2}{11} \mathbb{Z}$ for a suitable generator $\overline{x}$ and $\left(\overline{f}\right)_{11}$ acts as $-id$.\\

\textbf{The cyclotomic factor.} 
 Since $5$ is not feasible and $\det S$ is divisible by $5$ Corollary \ref{cor:p_not_feasible} implies that $\det C$ is not divisible by $5$.  
 This determines the cyclotomic factor $C$ to be the (unique) negative definite lattice of rank $2$ and determinant $11$. Its Gram matrix and a positive isometry acting as $-id$ on the discriminant group $A_C \cong \mathbb{F}_{11}$ are given by 
 
 \[(C,f_C)=\left[ -\left(
 \begin{matrix}
 2&1\\
 1&6
 \end{matrix}\right), 
  \left(\begin{matrix}
 1 &1\\

 0 &-1\\
 \end{matrix}\right)\right].\]
 For the discriminant form we have $q_C\cong(-2x^2/11)$, hence there is an isomorphism $\phi_{11}: \left(A_S\right)_{11} \rightarrow \left(A_C\right)_{11}$ such that $q_C \circ \phi_{11} = -\left(q_S\right)_{11}$. Hence the gluing of $(S,f_0)$ and $(C,f_C)$ along $\phi_{11}$ results in a lattice $(N,f)$ of signature $(1,21)$ and discriminant $5^6$ (or $5^{14}$). This is represented in Figure \ref{fig:glue20}, where each box represents a sublattice together with its discriminant group and the $\mathbb{F}_{11}$ over the edge represents the glue subgroup. The characteristic polynomial of $\overline{f}$ on $A_N$ is the prime factor of
 $\overline{s_{20}}\in \mathbb{F}_5[x]$ corresponding to the prime $p_1$ (resp. $p_2$), and in particular it is irreducible. Positivity is then verified by the linear programming test. 
 
 In both cases Theorem \ref{thm:realized} provides a supersingular K3 surface over $\overline{\mathbb{F}}_5$ and the automorphism on it.

\end{proof}
  \FloatBarrier

\begin{proposition}
 The minimal Salem number $\lambda_{16}$ of degree $d=16$ is realized in characteristic $5$ with Artin invariant $\sigma=5$.
\end{proposition}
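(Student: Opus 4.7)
The plan is to follow the general strategy outlined above, producing a Salem factor $S$ of rank $16$ and a cyclotomic factor $C$ of rank $6$ which glue to a supersingular K3 lattice $N$ of determinant $-5^{10}$, and then to apply the Strong Torelli Corollary \ref{cor:Strong_Torelli} together with Theorem \ref{thm:realized}.

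\textbf{Salem factor.} Since $s_{16}$ is simple, every $s_{16}(x)$-lattice arises as a twist $L_0(a)$ of the principal lattice $L_0$, with $|\det L_0|=|s_{16}(1) s_{16}(-1)|$. I would first compute this value to read off the feasible primes. I would then factor both the feasible primes and $p=5$ in $\mathcal{O}_k$ and, using Proposition \ref{prop:finite number of twists}, restrict to the finite set $A$ of admissible twists $a$ satisfying the order bound $D(n)\le 6$ from Theorem \ref{thm:realizability_conditions}(2) and the cyclotomic annihilation condition \ref{thm:realizability_conditions}(3). Since we want Artin invariant $\sigma=5$, the $5$-primary part of $|\det L_0(a)|$ should contribute $5^{10}$ together with the contribution of $C$; a direct inspection of the factorization $5\mathcal{O}_k=\prod \mathfrak{p}_i$ singles out the candidates whose norms match. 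After restricting to hyperbolic signature by twisting with a unit in $\mathcal{O}_k^{\times}/\mathcal{O}_k^{\times 2}$, I would apply the quadratic positivity test (Theorem \ref{thm:quadratic_test}) to obtain a positive $(S,f_S)=(L_0(a),f_0)$.

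\textbf{Cyclotomic factor.} The rank-$6$ cyclotomic factor must be negative definite, with characteristic polynomial a product of cyclotomic polynomials of total degree $6$, and with discriminant group whose non-$5$ primary part matches $A_S$ under some glue map $\phi$. I would enumerate the candidate product polynomials (for instance $c_7$, $c_9$, $c_{14}$, $c_{18}$, as well as reducible combinations like $c_1 c_2 c_3 c_6$, $c_2 c_5$, $c_3 c_{12}$, etc.), for each one list the possible negative definite lattices in the resulting genus (this is a finite enumeration), and pick $f_C \in O(C)$ of the required order. The feasible primes dictate which cyclotomic factors can actually contribute glue; the constraints from Theorem \ref{thm:realizability_conditions} and Corollary \ref{cor:p_not_feasible} (since $5$ is presumably not a feasible prime here, the $5$-primary parts cannot be glued) will severely narrow the choices. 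Among the surviving possibilities, a candidate $(C,f_C)$ will be chosen whose discriminant form is $(-1)$-times that of $A_S$ on the glue subgroups, so that a glue map $\phi$ exists compatible with $\overline{f}_S$ and $\overline{f}_C$.

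\textbf{Gluing and conclusion.} With $(S,f_S)$ and $(C,f_C)$ as above and a compatible glue map $\phi\colon H_S\xrightarrow{\sim} H_C$, the gluing construction of Section \ref{sec:glue} yields a lattice $(N,f)$ of signature $(1,21)$ whose discriminant group is $5$-elementary of order $5^{10}$, i.e.\ a supersingular K3 lattice of Artin invariant $\sigma=5$. Corollary \ref{coro:positivity_finite_part_not_important} together with the quadratic positivity test on $N$ guarantees that $f$ is positive. Finally, one verifies that the characteristic polynomial of $\overline{f}$ on $A_N$ is irreducible over $\mathbb{F}_5$ (this is inherited from the irreducibility of the corresponding prime factor of $\overline{s}_{16}\bmod 5$), and Theorem \ref{thm:realized} produces the desired supersingular K3 surface $X/\overline{\mathbb{F}}_5$ and the automorphism $F\in\Aut(X)$ with $h(F)=\log \lambda_{16}$.

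\textbf{Main obstacle.} The delicate step is step (8) of the general strategy in rank $6$: among the many genera of rank-$6$ negative definite lattices with the required determinant and discriminant form, one must find an actual isometry $f_C$ of the prescribed cyclotomic type that glues to the Salem factor through a map $\phi$ respecting both the discriminant quadratic forms and the actions $\overline{f}_S$, $\overline{f}_C$. This is a finite but potentially large combinatorial search that must be carried out on the computer, and unlike the $d=20$ case there is no uniqueness of $C$ to make the construction canonical.
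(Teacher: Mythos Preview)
Your outline follows the paper's general strategy, but it never leaves the level of ``I would compute\ldots'' and, more importantly, it misidentifies where the difficulty lies. In the paper's actual proof the feasible primes for $s_{16}$ are $3$ and $29$, and after steps (1)--(7) only twists of norm $3\cdot 5^5$, $5^5$, $29$ survive. Choosing the twist $a$ of norm $5^5$ gives $|\det S|=|\det L_0|\cdot|N^k_{\mathbb{Q}}(a)|^2=3\cdot 5^{10}$ (the principal lattice already has determinant $3$). Since $5$ is not feasible, Corollary \ref{cor:p_not_feasible} forces $5\nmid\det C$, so $C$ must be negative definite of rank $6$ and determinant $3$. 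There is exactly one such lattice: $E_6$. So your ``main obstacle'' evaporates --- there is no combinatorial search over genera at all, and the claimed non-uniqueness of $C$ is simply false here.

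What you are actually missing is the concrete choice of isometry on $E_6$: since $\overline{f_0}$ acts as $-\mathrm{id}$ on $(A_S)_3\cong\mathbb{F}_3$, one needs a \emph{positive} $h\in O(E_6)$ with $\overline{h}=-\mathrm{id}$ on $A_{E_6}$. The paper takes the reflection about the center of the $E_6$ Dynkin diagram, which has characteristic polynomial $(x-1)^4(x+1)^2=c_1^4 c_2^2$ --- a case not on your candidate list of cyclotomic factors. After gluing over $\mathbb{F}_3$ the linear positivity test already succeeds (with $\mu(f)=-6$), so the quadratic test is not even needed. The irreducibility of $\chi_{\overline{f}}$ on $A_N\cong\mathbb{F}_5^{10}$ is guaranteed by step (5). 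In short: your plan is the right template, but the paper's proof is much shorter and more explicit than your sketch suggests, precisely because the cyclotomic side is rigid once the Salem twist is fixed.
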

\begin{proof}

 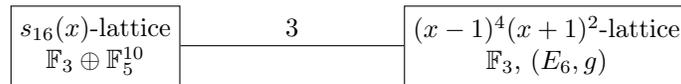
\begin{figure}[!htbp]
  \tikzstyle{block} = [draw, rectangle, minimum height=3em, minimum width=3em]
  \tikzstyle{virtual} = [coordinate]

  \begin{tikzpicture}[auto, node distance=2cm]
    \node [block, align=center] (S)     { $s_{16}(x)$-lattice \\
    $\FF_{3}\oplus \FF_5^{10}$};
    \node [virtual, right of=S](up){};
    \node [virtual, right of=up](up1){};
    \node [block, align=center, right of=up1]   (C)     {$(x-1)^4(x+1)^2$-lattice\\
    $\FF_3$, $(E_6,g)$};
    \draw [-] (S) -- node {$3$} (C);
  \end{tikzpicture}
  \caption{Gluing for $\lambda_{16}$}\label{fig:glue16}
 \end{figure} 
 The feasible primes for $s_{16}$ are $3$ and $29$. 
 At step (7) of the general strategy we are left with twists of norms $3\cdot 5^{5},5^{5},29$. 
 We choose the twist $a$ of norm $5^5$, so that $S=L_0(a)$ has determinant $-3\cdot5^{10}$. In order to remove the $3$-primary part of $A_S$ by gluing, $C$ must have determinant $3$ and signature $(0,6)$, which determines it uniquely as $E_6$ (cf. \cite{conway_sloane:sphere_packings}). A direct computation shows that the forms $(q_S)_3\cong -q_{E_6}$ are opposite and thus a gluing of lattices $N = S \oplus_{\phi_3} C$ exists.
 Since the action of $f_0$ on $(A_S)_3$ is given by $-id$, we need a positive isometry of $E_6$ acting as $-id$ on the discriminant. Looking at the Dynkin diagram of $E_6$, we consider the reflection $h\in O(E_6)$ around the center. A computation shows that $h$ has the desired properties, hence $f_0 \oplus h$ extends to an isometry of $N$ whose positivity is verified by the linear test (with $\mu(f)=-6$).
 The irreducibility of the minimal polynomial on $A_N$ is assured by step $(5)$ of the general strategy and we can apply Theorem \ref{thm:realized} to conclude the proof.
 \end{proof}
 Here is why we choose the twist of norm $5^5$ for the Salem factor:
 If instead we twist the Salem factor above $29$, the only possibility for the cyclotomic part is $c(x)=c_7(x)$. It is a simple reciprocal polynomial. Hence $C$ is a twist of the principal $c_7$-lattice. But $c_7(1)=7$, so it is ramified over $7$ and $7 \mid \det C$. This leads to a contradiction since $7$ is not feasible.
 Since the principal $s_{16}$-lattice $L_0$ is ramified over $3$ (has determinant $\pm 3$), it is simpler to twist just above $5^5$ than $3 \cdot 5^5$.  
\FloatBarrier

\begin{proposition}
 The Salem number $\lambda_{14}$ is realized on a supersingular K3 surface in characteristic $5$ with Artin invariant $\sigma=6$.
\end{proposition}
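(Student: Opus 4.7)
The plan is to execute the general eight-step strategy of this section. The Salem polynomial $s_{14}$ is simple and has degree 14, so the Salem factor is a twist $S = L_0(a)$ of the principal $s_{14}$-lattice and the cyclotomic factor $C$ must have rank $22-14 = 8$ and signature $(0,8)$. The target Artin invariant $\sigma=6$ forces $|A_N| = 5^{12}$.

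First I would compute $|\det L_0| = |s_{14}(1)\,s_{14}(-1)|$, determine the feasible primes for $s_{14}$, and factor them together with $5\mathcal{O}_k$ in the ring of integers of the trace field $k$. If $5$ turns out not to be feasible for $s_{14}$ (to be verified from the factorization of $\overline{s_{14}}$ modulo 5), then Corollary \ref{cor:p_not_feasible} forces the entire $5$-primary part of $A_N$ to sit inside $A_S$, so the twist $a$ must contain a factor of $\mathcal{O}_k$-norm $5^6$. Together with Proposition \ref{prop:finite number of  twists} and Theorem \ref{thm:realizability_conditions}, this reduces the candidates to a finite, computer-enumerable list, which is pruned further by the linear and (if needed) quadratic positivity tests on $S$.

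Next, for each surviving twist I would read off the residual (non-5) part of $q_S$ and determine the genus of the required cyclotomic factor $C$: a negative-definite rank-8 lattice whose discriminant form is opposite to the residual part of $q_S$ on the glue subgroup. Natural candidates are root lattices such as $E_8$, $E_7\oplus A_1$, $E_6\oplus A_2$ or $D_8$, depending on which feasible primes survive. For each such $C$ I would search for a positive, finite-order isometry $f_C$ whose characteristic polynomial is a product of cyclotomic polynomials of total degree 8 and whose action on $H_C$ matches $\overline{f_0}|H_S$ under a glue isomorphism $\phi$ satisfying $q_S = -q_C \circ \phi$ on the glue.

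Finally, I would assemble $N = S\oplus_\phi C$, verify global positivity of $f = f_0\oplus f_C$ by the linear test, and fall back on the quadratic test (Theorem \ref{thm:quadratic_test}) if necessary; irreducibility of the minimal polynomial of $\overline{f}|A_N$ required by Theorem \ref{thm:realized} is automatic from step (5) of the strategy once 5 is not feasible, so Theorem \ref{thm:realized} will deliver the desired supersingular K3 surface and automorphism. The main obstacle I expect is global positivity of the glue: obstructing roots may appear in $N$ even when both $(S,f_0)$ and $(C,f_C)$ are positive, so the quadratic positivity test of Theorem \ref{thm:quadratic_test} is likely indispensable here. A secondary difficulty is the combinatorial search over rank-8 cyclotomic factors, since each admissible genus can contain several isometry classes and many compatible isometries to try.
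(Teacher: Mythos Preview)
Your plan is correct in outline and would eventually succeed, but it overlooks the key simplification that makes this case nearly trivial, and the difficulties you anticipate do not actually arise.

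The crucial observation you are missing is that $|s_{14}(1)s_{14}(-1)|=1$, so the principal $s_{14}$-lattice $L_0$ is \emph{unimodular}. Consequently, if you twist only by a prime $b\in\mathcal{O}_k$ of norm $5^6$ that stays inert in $\mathcal{O}_K$ (and adjust by a unit to make $S=L_0(ub)$ hyperbolic), then $A_S\cong\mathbb{F}_5^{12}$ with no other primary parts at all. There is nothing left to glue: the cyclotomic factor must be an even, unimodular, negative-definite lattice of rank $8$, hence $E_8$, and $N=S\oplus E_8$ is simply the orthogonal direct sum. The isometry $f_0\oplus \id_{E_8}$ works; inertness of $b$ makes $\chi_{\overline{f}|A_N}$ irreducible, and Theorem~\ref{thm:realized} applies.

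Both of your anticipated obstacles therefore disappear. First, there is no ``combinatorial search over rank-8 cyclotomic factors'': the genus is forced to be $E_8$, and $f_C=\id$ suffices since the glue subgroup is trivial. Second, global positivity is not an issue: the extension has trivial glue, so by Corollary~\ref{coro:positivity_finite_part_not_important} (or directly) positivity of $(S,f_0)$ and of $(E_8,\id)$---the latter obvious---gives positivity of $(N,f)$. No quadratic test is needed. The paper's proof is four lines for exactly this reason.
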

\begin{proof}
 The principal $s_{14}(x)$-lattice is unimodular. Now we can twist it by a prime $b\in \mathcal{O}_k$ of norm $5^6$ inert in $\mathcal{O}_K$ to get a positive isometry on a $5-$elementary hyperbolic lattice of rank $14$. Since the prime is inert, the characteristic polynomial on the discriminant is irreducible.
 To obtain a hyperbolic lattice of rank $22$ we take the direct sum with $(E_8,id)$, obtaining also a positive isometry. As usual Theorem \ref{thm:realized} provides the supersingular K3 surface and the automorphism. 
 \end{proof}

\begin{proposition}
 The Salem number $\lambda_{12}$ is realized on a supersingular K3 surface with Artin invariant $\sigma=2$ in characteristic $5$. 
\end{proposition}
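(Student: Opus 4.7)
The plan is to follow the eight-step strategy outlined at the start of this section with target Artin invariant $\sigma=2$, so that the glued supersingular K3 lattice $N$ will satisfy $|A_N|=5^4$. First I would compute $|s_{12}(1)s_{12}(-1)|$ to extract the (small) list of feasible primes for $s_{12}$, then list the primes of $\mathcal{O}_k$ above those primes and above $5$. As in the cases $d=20$ and $d=16$ above, this yields a finite collection of candidate twists $a\in\mathcal{O}_k$ for the Salem factor $S=L_0(a)$. Because only feasible primes divide the glue (Theorem \ref{thm:resultant}) and $p=5$ is very likely non-feasible, Corollary \ref{cor:p_not_feasible} forces the $5$-primary part of the discriminant to sit entirely on one side; I would put it on $S$ by including a prime of $\mathcal{O}_k$ of norm $5^2$ in $a$, which, being inert in $\mathcal{O}_K$ precisely when $\overline{s}_{12}$ has an irreducible degree-$4$ factor modulo $5$, guarantees that $\overline{f_0}$ acts on the $5$-part of $A_S$ with irreducible characteristic polynomial.

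I would then execute steps (3)--(6): enforce the divisibility and order constraints of Theorem \ref{thm:realizability_conditions} on $a$, multiply by a unit $u\in\mathcal{O}_k^\times$ so that $L_0(au)$ has signature $(1,11)$, and keep only those candidates whose non-$5$ discriminant form is realizable as minus that of some negative-definite rank-$10$ lattice. Step (7), checking positivity of $(S,f_0)$ via the quadratic positivity test of Theorem \ref{thm:quadratic_test}, is the step I expect to be the main obstacle: $d=12$ is exactly the case McMullen left open in \cite{mcmullen:minimum} because the linear test failed, and the analogous failure is quite likely to appear here as well. One must then rely on the quadratic refinement and its integer-programming implementation used throughout the paper.

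For the cyclotomic factor $C$, I would search among negative-definite lattices of rank $10$ (root lattices such as $D_{10}$, $E_8\oplus A_2$, $A_4\oplus E_6$, or suitable twists of principal cyclotomic lattices) whose discriminant form on the non-$5$ feasible primes is opposite to the corresponding part of $q_S$, and which admit a positive finite-order isometry $f_C$ whose induced action on $A_C$ matches that of $\overline{f_0}$ on $H_S$ under some glue map $\phi$. Corollary \ref{coro:positivity_finite_part_not_important} provides flexibility for the choice of $f_C$ modulo positivity. Gluing then produces a lattice $N=S\oplus_\phi C$ of signature $(1,21)$ and determinant $-5^4$ together with an isometry $f$. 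Positivity of $f$ on $N$ is verified again by the quadratic positivity test; Proposition \ref{prop:perfect_power} together with the choice of an inert prime in the first paragraph then yield that $\chi_{\overline{f}|A_N}$ is irreducible of degree $4$, so Theorem \ref{thm:realized} produces the desired supersingular K3 surface and automorphism of entropy $\log\lambda_{12}$. Finiteness of the whole search, ensuring termination in case initial candidates fail, is guaranteed by Proposition \ref{prop:finite number of  twists}.
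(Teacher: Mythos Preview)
Your outline follows the same eight-step strategy as the paper and correctly anticipates the key features: the feasible primes for $s_{12}$ are $7,13,31$ (so $5$ is indeed not feasible), the $5$-part of $A_N$ must sit on $S$ via a prime of $\mathcal{O}_k$ of norm $5^2$ inert in $\mathcal{O}_K$, and the linear positivity test fails on the final lattice so that the quadratic test of Theorem~\ref{thm:quadratic_test} is essential.

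Where the paper's actual construction departs from your sketch is on the cyclotomic side. The paper twists $S$ not only by the norm-$5^2$ prime $q_1$ but also by a prime $q_2$ of norm $31$, so that $A_S\cong\FF_5^4\oplus\FF_7\oplus\FF_{31}^2$. The rank-$10$ cyclotomic part is then assembled from \emph{two} pieces rather than one: a rank-$2$ lattice of determinant $7$ (glued over $\FF_7$, on which $\overline{f}$ acts as $-\id$), and a negative-definite rank-$8$ twist of the principal $c_{30}$-lattice (glued over $\FF_{31}^2$). The reason to introduce $31$ and $c_{30}$ is that $\overline{s_{12}}$ and $\overline{c_{30}}$ share the common factor $(x+7)(x+9)$ modulo $31$, which is exactly the compatibility that Theorem~\ref{thm:resultant} and McMullen's gluing theorems require. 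Your suggested root lattices $D_{10}$, $E_8\oplus A_2$, $A_4\oplus E_6$ have determinants $4$, $3$, $15$ respectively and cannot glue against the $\FF_7$ that even the minimal twist of $L_0$ already carries; the working candidate lies in the ``twists of principal cyclotomic lattices'' clause of your search, specifically $c_{30}$, combined with a separate rank-$2$ piece for the $7$-part.
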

\begin{proof}
 \begin{figure}[!htbp]
  \tikzstyle{block} = [draw, rectangle, minimum height=3em, minimum width=3em]
  \tikzstyle{virtual} = [coordinate]

  \begin{tikzpicture}[auto, node distance=2cm]
    \node [block,align=center] (C30)    {$c_{30}(x)$-lattice\\ $\mathbb{F}_{31}^2$};
    \node [virtual, right of=C30](up0){};
    \node [block, right of=up0,align=center](S){$s_{12}(x)$-lattice\\ 
    $\FF_5^4 \oplus \FF_7 \oplus \FF_{31}^2$};
    \node [virtual, right of=S](up){};
    \node [block,align=center, right of=up]   (C)     {$(x^2-1)$-lattice\\ $(M,f_M)$, $\FF_7$ };
    \draw [-] (S) -- node {$7$}(C);
    \draw [-] (C30) -- node {$31^2$}(S);
  \end{tikzpicture}
  \caption{Gluing for $\lambda_{12}$}\label{fig:glue12}
 \end{figure}
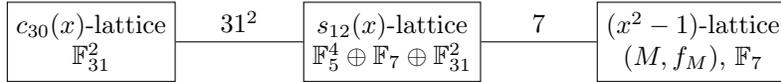

The principal $s_{12}(x)$-lattice $L_0$ has determinant $|s_{12}(1)s_{12}(-1)|=7$, hence discriminant group $\mathbb{F}_7$, where the isometry acts as $-id$. The feasible primes are $7,13,31$. 
Note that $s_{12}$ and $c_{30}$ have the common factor $(x+7)(x+9)$ modulo $31$. 
Hence we choose to twist the principal $s_{12}(x)$-lattice with a prime $q_1 \in \mathcal{O}_k$ of norm $5^2$ inert in $\mathcal{O}_K$ and a prime $q_2$ of norm $31$ such
that $S=L_0(uq_1q_2)$ has signature $(1,11)$ for a suitable unit $u$. Then the discriminant group $A_{S}=\mathbb{F}_{5}^4 \oplus \mathbb{F}_7\oplus \FF_{31}^2$. 
In order to glue over the $7$-primary summand, note that the discriminant form on $\left(A_S\right)_7$ is a square. Hence $S$ can be glued with the negative definite lattice
 \[ (M,f_M)=\left[ -\left(\begin{matrix}
      2&1\\
      1&4
     \end{matrix}\right),
\left(\begin{matrix}
      1&1\\
      0&-1
     \end{matrix}\right) \right] \] because $A_{M} \cong \mathbb{F}_7$ and $-q_M$ is a square. Call the resulting lattice $(L_1,f_1)$.
For the glue above $31$ we take a twist of the principal $c_{30}$-lattice $(C30,f_{C30})$. Now \cite[Theorem 4.3]{mcmullen:entropy_and_glue} guarantees the existence of a twist by a divisor $a$ of $31$  such that the characteristic polynomial of $\overline{f_{C30}}$ on $A_{C30(a)}$ is $(x+7)(x+9)$. We can find a unit $u$ such that $C30(ua)$ has signature $(0,8)$.
By construction the characteristic polynomials on the $31$-primary part match, and \cite[Theorem 3.1]{mcmullen:entropy_and_glue} provides the existence of a glue map $\phi: (A_{L_{1}})_{31} \rightarrow (A_{C30(ua)})_{31}$. Set $(N,f)=(L_1\oplus_\phi C30(ua),f_1 \oplus f_{C30})$, which is a hyperbolic $5$-elementary lattice of determinant $-5^4$.
The linear positivity test of $(N,f)$ fails, since there is a optimal state with objective $-2$, but the quadratic test does confirm the positivity of $(N,f)$. 
To apply the crystalline Torelli theorem it suffices to check that the characteristic polynomial on $N^\vee/N\cong\mathbb{F}_{5}^4$ is irreducible. This is indeed the case, since the twist $q_1$ remains inert in $\mathcal{O}_K$. 
\end{proof}

\begin{proposition}
 Lehmer's number $\lambda_{10}$ is realized by an automorphism of a supersingular K3 surface in characteristic $5$ with Artin invariant $\sigma=2$.

\end{proposition}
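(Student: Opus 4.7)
The approach parallels the $\lambda_{12}$ case. Since $|s_{10}(1) s_{10}(-1)|=1$, the principal $s_{10}(x)$-lattice $L_0$ is unimodular. The goal is to construct a supersingular K3 lattice $N$ of determinant $-5^4$ (Artin invariant $\sigma=2$) together with a positive isometry $f$ whose action on $A_N\cong\FF_5^4$ has irreducible characteristic polynomial, so that Theorem~\ref{thm:realized} yields the desired automorphism.

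First, I would factor $\overline{s_{10}}(x)\in\FF_5[x]$ to locate an irreducible reciprocal factor of degree~$4$; the corresponding prime ideal $q_1\subset\O_k$ has norm $5^2$ and remains inert in $\O_K$. Twisting $L_0$ by $q_1$ introduces an $\FF_5^4$-summand in $A_{L_0(q_1)}$ on which $\overline{f_0}$ acts irreducibly, as desired. After computing the feasible primes for $s_{10}$ (those dividing $\prod_{\varphi(k)\leq 12}\res(s_{10},c_k)$) and, if required, further twisting by appropriate primes above them, I choose a unit $u\in\O_k^\times$ so that the resulting Salem factor $S=L_0(a)$ has signature $(1,9)$.

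The next step is to construct a negative-definite cyclotomic factor $C$ of rank~$12$ together with a positive isometry $f_C\in O(C)$ whose characteristic polynomial is a product of cyclotomic polynomials, and whose discriminant form is opposite to the non-$5$-primary part of $q_S$. Gluing $S$ and $C$ entirely along their non-$5$-primary discriminant subgroups yields a primitive extension $S\oplus C\hookrightarrow N$ with $A_N\cong\FF_5^4$, so $N$ is a $5$-elementary supersingular K3 lattice of Artin invariant~$2$. By construction, $\overline{f}$ acts on $A_N$ with the same irreducible characteristic polynomial as on the $5$-part of $A_S$.

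Positivity of $f$ on $N$ is then tested by McMullen's linear positivity test, falling back on the quadratic test of Theorem~\ref{thm:quadratic_test} if the linear test is inconclusive (as happened for $\lambda_{12}$). Theorem~\ref{thm:realized} finally provides the supersingular K3 surface $X$ over $\overline{\FF}_5$ of Artin invariant~$2$ together with an automorphism $F\in\Aut(X)$ of entropy $\log\lambda_{10}$. The main technical obstacle is the combinatorial search for a suitable cyclotomic factor $C$ of rank~$12$ whose genus, isometry and discriminant match the constraints imposed by $S$; although finite by Proposition~\ref{prop:finite number of  twists} together with the $5$-elementarity of $N$, this enumeration is substantial at this rank and requires a computer algebra system.
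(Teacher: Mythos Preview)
Your plan is correct and follows the same strategy as the paper. The paper's proof makes the choices explicit: the feasible primes for $s_{10}$ are $3,5,13,23,29$; the Salem factor is $S=L_0(a)$ for a twist $a$ of norm $5^2\cdot 13$ (inert in $\O_K$), giving $A_S\cong\FF_5^4\oplus\FF_{13}^2$; and the rank-$12$ cyclotomic factor is not sought as a single lattice but assembled as a chain --- a negative-definite twist of the principal $c_{14}$-lattice with discriminant $\FF_{13}^2\oplus\FF_7$ (glued to $S$ over $13^2$), followed by $(A_6,g)$ with $g$ the central Dynkin reflection composed with $-\id$ (glued over $7$). Breaking $C$ into these two rank-$6$ pieces is exactly what makes the ``substantial enumeration'' you anticipate manageable, and the need for a nontrivial glue prime (here $13$) is forced since an even unimodular negative-definite lattice of rank $12$ does not exist.
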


\begin{proof}
 \begin{figure}[!htbp]
  \tikzstyle{block} = [draw, rectangle, minimum height=3em, minimum width=3em]
  \tikzstyle{virtual} = [coordinate]

  \begin{tikzpicture}[auto, node distance=2.3cm]
    \node [block, align=center]                 (S)     {$s_{10}(x)$-lattice\\ $\FF_{13}^2\oplus \FF_5^4$};
    \node [virtual, right of=S](up){};
    \node [block, right of=up, align=center]   (C)     {$c_{14}(x)$-lattice\\ $\mathbb{F}_{13}^2\oplus \mathbb{F}_{7}$};
    \node [virtual, right of=C] (C1) {};
    \node [block, right of=C1, align=center] (C1C2)  {$(x^2-1)^3$-lattice\\ 
    $(A_6,g)$, $\mathbb{F}_{7}$};
     Connect nodes
    \draw [-] (S) -- node {$13^2$} (C);
    \draw [-] (C) -- node {$7$} (C1C2);
  \end{tikzpicture}
  \caption{Gluings for $\lambda_{10}$.}\label{fig:glue10}
 \end{figure}
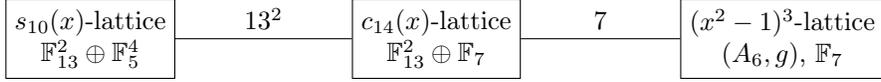 

The principal $s_{10}$-lattice is unimodular and the feasible primes for $s_{10}$ are $3,5,13,23,29$. There is an element $a \in \O_k$ of norm
$5^2 \cdot 13$ such that $S=L_0(a)$ is hyperbolic and $A_S \cong \mathbb{F}_5^4 \oplus \mathbb{F}_{13}^2$ We need to glue $S$ with two negative definite lattices of rank 6 to cancel the $13$-primary part of the discriminant group, as follows.
The only possibility to glue above $13$ is to use the principal $c_{14}$-lattice $C14$, which has discriminant $\mathbb{F}_7$.
Since $c_{14}$ is also simple (with the analogous definition for cyclotomic polynomials) we can find a negative definite twist $C14(b)$ with determinant $7 \cdot 13^2$, and such that the characteristic polynomial of $\overline{f_{14}}$ on the $13$-primary part matches with that of $\overline{f_0}$.
Call $\left(L_1,f_1\right)$ the resulting glue $S \oplus_{\phi_{13}} C14(b)$ over $13$. It has rank $16$ and determinant $5^4 7$, hence it remains to glue it with a negative
definite lattice of rank $6$ and determinant $7$, i.e. $A_6$.
It also remains to find a good isometry $g$ of $A_6$. Since $(A_S)_7\cong\FF_7$ and $\overline{f_1}$ acts as $-id$, so should do $\overline{g}$. The obvious choice $g=-id_{A_6}$ glues just fine, however, it is not positive, as any root of $A_6$ is cyclic, hence we need to look for another one. Let $r_1,\ldots,r_6$ be a set of linearly independent roots (corresponding to the nodes of the Dynkin diagram of $A_6$), then $g$ is given by the central reflection composed with $-id$
\[g: (r_1,\ldots,r_6) \mapsto (-r_6,\ldots,-r_1).\]
A direct computation shows that $g$ has the right properties. 
Since $a$ is inert in $\O_K$, the resulting isometry has irreducible characteristic polynomial and the proof concludes as the preceding ones.
\end{proof}

\begin{proposition}
There is a supersingular K3 surface over $\overline{\FF}_5$ with Artin invariant $\sigma=4$ and an automorphism on it realizing $\lambda_8$. The characteristic polynomial of the action on $\NS$ is given by $s_{8}c_1^{12}c_2^2$.
\end{proposition}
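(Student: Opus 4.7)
The plan is to follow the eight-step strategy of Section 8. First, I compute the principal $s_8$-lattice $L_0$, its determinant $|s_8(1)s_8(-1)|$, and the feasible primes for $s_8$. Because the target characteristic polynomial is $s_8(x)(x-1)^{12}(x+1)^2$ and I aim at a supersingular K3 lattice of determinant $-5^8$ (i.e.\ $\sigma=4$), the Salem part $S=L_0(a)$ must be twisted so that $\det S$ carries a factor of $5^8$ together with whatever feasible primes will be absorbed by the gluing with the cyclotomic part. Running steps (1)--(7) of the general strategy yields a finite list of candidate twists $a\in\mathcal{O}_k$; I keep those for which $L_0(a)$ is even hyperbolic of signature $(1,7)$ and $f_0|L_0(a)$ is positive by the linear (or quadratic) positivity test. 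Crucially, I expect to include a prime of $\mathcal{O}_k$ above $5$ that is inert in $\mathcal{O}_K$, since this guarantees that the minimal polynomial of $\overline{f}$ on the $5$-primary discriminant is irreducible, which will be required at the end to invoke Theorem \ref{thm:realized}.

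Next, I construct a negative-definite cyclotomic factor $(C,f_C)$ of rank $14$ with characteristic polynomial $(x-1)^{12}(x+1)^2$. Writing $C_\pm=\ker(f_C\mp \id)$, these are negative-definite of ranks $12$ and $2$ respectively, and $C$ is realized as a $2$-elementary primitive extension $C_+\oplus C_-\hookrightarrow C$. I would search among standard root lattices (for instance orthogonal sums of copies of $E_8(-1)$, $D_n(-1)$ and $A_n(-1)$) for choices of $C_\pm$ whose discriminant forms at each feasible prime other than $5$ are isomorphic to the opposite of the corresponding part of $q_S$, so that a glue map $\phi$ along these primary components exists. The glued lattice $N:=S\oplus_\phi C$ will then be hyperbolic, $5$-elementary of determinant $-5^8$, that is, a supersingular K3 lattice of Artin invariant $4$.

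It remains to verify that the induced isometry $f$ on $N$ is positive and satisfies the hypotheses of Theorem \ref{thm:realized}. By Corollary \ref{coro:positivity_finite_part_not_important} I may freely adjust $f_C$ within $O(C)$ to any positive isometry inducing the same action on $A_C$, so the positivity of $f$ on $N$ reduces to a single application of the linear, and if needed quadratic, positivity test of Theorem \ref{thm:quadratic_test}. The irreducibility of the minimal polynomial of $\overline{f}$ on $A_N\cong\mathbb{F}_5^8$ follows from the inertness of the prime above $5$ used to twist $L_0$, and Theorem \ref{thm:realized} then delivers the supersingular K3 surface over $\overline{\mathbb{F}}_5$ together with an automorphism realizing $\log\lambda_8$. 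The main obstacle I anticipate is step (8): the rank-$12$ fixed sublattice $C_+$ will be full of roots whose $f$-orbits could produce cyclic or obstructing roots inside $N$, so the delicate point is to match discriminant forms at the feasible primes while simultaneously preventing such obstructions; here the flexibility afforded by Corollary \ref{coro:positivity_finite_part_not_important} and a careful choice of the involution on $C_-$ (e.g.\ as a central reflection composed with $-\id$, in the spirit of the $\lambda_{10}$ and $\lambda_{16}$ constructions) is what will make the verification feasible.
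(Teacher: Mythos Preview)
Your outline follows the same eight-step strategy as the paper, and the Salem side is fine: indeed $|s_8(1)s_8(-1)|=3$, so $\det L_0=3$, the prime $5$ stays prime in $\mathcal{O}_K$, and twisting by $5u$ for a suitable unit $u$ gives a hyperbolic $S=L_0(5u)$ with $A_S\cong \mathbb{F}_3\oplus\mathbb{F}_5^8$ and $\overline{f_0}$ acting as $-\id$ on the $3$-part. The difference is in how you propose to build the cyclotomic factor, and there your plan has a concrete gap.

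You want to assemble $C$ from its eigenspaces $C_\pm$ (ranks $12$ and $2$) chosen among root lattices. But for the involution $f_C$ on the negative-definite lattice $C$ to be positive it must have no cyclic roots, and since $f_C^2=\id$ a root $\delta$ is cyclic precisely when $\delta\in C_-$. Hence $C_-$ must be \emph{root-free}, so it cannot be a root lattice at all. At the same time, the $3$-part of $A_C$ has to carry the $-\id$ action (to match $(A_S)_3$), which forces $(A_C)_3$ to sit inside $A_{C_-}$; together with $A_C\cong\mathbb{F}_3$ and $\rk C_-=2$ this is very restrictive. Your closing remark about ``a careful choice of the involution on $C_-$'' is also off target: on $C_-$ the action is $-\id$ by definition, so there is nothing to choose there.

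The paper resolves all of this in one stroke by taking $C=E_6\oplus E_8$ with $f_C=h\oplus\id$, where $h$ is the Dynkin-diagram reflection of $E_6$ (exactly as in the $\lambda_{16}$ construction). Then $A_C=A_{E_6}\cong\mathbb{F}_3$ with $\overline h=-\id$, the $(-1)$-eigenspace of $h$ in $E_6$ is $A_2(2)$ (minimal norm $-4$, hence root-free), and the only glue needed is $S\oplus_{\phi_3}E_6$ over $3$; the remaining rank is filled by $(E_8,\id)$. Positivity is then confirmed by the linear test, and irreducibility of $\chi_{\overline{f}}$ on $A_N\cong\mathbb{F}_5^8$ follows from $5$ being prime in $\mathcal{O}_K$. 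So the right move is not to pick $C_\pm$ separately, but to choose an involution on a root lattice whose $(-1)$-eigenspace is automatically root-free.
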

\begin{proof}

 \begin{figure}[!htbp]
  \tikzstyle{block} = [draw, rectangle, minimum height=3em, minimum width=3em]
  \tikzstyle{virtual} = [coordinate]

  \begin{tikzpicture}[auto, node distance=2.3cm]
    \node [block, align=center]                 (S)     {$s_8(x)$-lattice\\ $\FF_{3}\oplus \FF_5^8$};
    \node [virtual, right of=S](up){};
    \node [block, right of=up, align=center]   (C)     {$c_2(x)$-lattice\\ $(E_6,h)$, $\mathbb{F}_{3}$};
    \node [virtual, right of=C] (C1) {};
    \node [block, right of=C1, align=center] (C1C2)  {$(x-1)^8$-lattice\\ 
    $(E_8,id)$};
     Connect nodes
    \draw [-] (S) -- node {$3$} (C);
  \end{tikzpicture}
  \caption{Gluings for $\lambda_{8}$.}\label{fig:glue8}
 \end{figure}
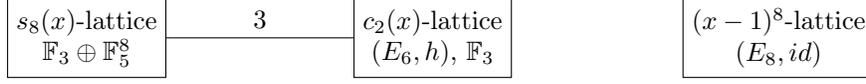 

The principal $s_8(x)$-lattice $L_0$ has discriminant group $\mathbb{F}_3$ and $p=5$ stays prime in $\O_K$.
One can find a unit $u \in \O_K$ such that $S=L_0(5u)$ is hyperbolic.
Then $\det S=-3\cdot 5^8$ and $f_0$ acts as $-id$ on $(A_S)_3\cong \FF_3$.
It turns out we can glue this to $(E_6,h)$ where $h\in O(E_6)$ is given by the central symmetry of the Dynkin diagram of $E_6$ like for $\lambda_{16}$.
Now $(S\oplus_\phi E_6,f_0 \oplus h)$ is a lattice of signature $(1,13)$ and discriminant group $\FF_5^8$. Since $5$ is prime in $\O_K$, $\overline{s}_8$ is the irreducible characteristic polynomial of the action on the discriminant group.
Positivity is confirmed by the linear test, and we conclude by taking the direct sum with 
$(E_8,id)$ to obtain an hyperbolic $5$-elementary lattice of rank $22$. 
\end{proof}

\begin{proposition}
There is a supersingular K3 surface over $\overline{\FF}_5$ with Artin invariant $\sigma=4$ and an automorphism on it realizing $\lambda_6$. Its characteristic polynomial on $\NS$ is given by $s_{6}(x)c_1^{9}(x)c_2(x)c_{14}(x)$.
\end{proposition}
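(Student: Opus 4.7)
The plan is to follow the scheme of the previous propositions for $\lambda_{10}$ and $\lambda_8$, adapting the gluing data. Since $s_6(\pm 1) = \pm 1$, the principal $s_6(x)$-lattice $L_0$ is unimodular. I would first enumerate the feasible primes for $s_6$ by computing $\res(s_6, c_k)$ for each cyclotomic polynomial $c_k$ of degree at most $16$; among these, $7$ is feasible because it divides $\res(s_6, c_{14})$. I would then look for $a \in \mathcal{O}_k$, a product of primes above $5$ and $7$, such that $S = L_0(a)$ is hyperbolic of signature $(1,5)$ with discriminant group whose $5$-primary part is $\FF_5^8$ and whose $7$-primary part is compatible with gluing to the $c_{14}$-factor; the prime above $5$ should be inert in $\mathcal{O}_K$, so that $\overline{f_0}$ acts irreducibly on $(A_S)_5$, and the quadratic positivity test of Theorem \ref{thm:quadratic_test} confirms that $f_0|S$ is positive.

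For the cyclotomic factor I would decompose $C = C_0 \oplus C_{14}$ according to the cyclotomic factors: $C_0$ has rank $10$ with involution of characteristic polynomial $c_1^9(x) c_2(x)$, and $C_{14}$ is a twist of the principal $c_{14}$-lattice of rank $6$. A natural first candidate for $C_0$ is $E_8 \oplus A_1 \oplus A_1$ equipped with identity on $E_8 \oplus A_1$ and $-\id$ on the remaining $A_1$; its discriminant group $\FF_2 \oplus \FF_2$ must then be cancelled by gluing, which may force replacing $C_0$ by another rank-$10$ negative-definite lattice with the same involution type but a self-annihilating $2$-part. For $C_{14}$, since $c_{14}(1) = 1$ and $c_{14}(-1) = 7$, the principal $c_{14}$-lattice has determinant $7$; following \cite[Theorem 4.3]{mcmullen:entropy_and_glue} I would choose a negative-definite twist whose $7$-primary discriminant form is opposite to $(q_S)_7$ and whose action on $(A_{C_{14}})_7$ matches $\overline{f_0}$ on $(A_S)_7$, yielding a glue map.

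Assembling the pieces, $N = S \oplus_\phi C$ is a hyperbolic $5$-elementary lattice of rank $22$ with $A_N \cong \FF_5^8$, i.e.\ a supersingular K3 lattice of Artin invariant $\sigma = 4$. Positivity of the assembled isometry $f = f_0 \oplus f_{C_0} \oplus f_{C_{14}}$ is first checked with the linear test \eqref{eq:linear_prog_test}, and by Corollary \ref{coro:positivity_finite_part_not_important} can be reduced to the restriction to the non-finite-order summands; otherwise one falls back on the quadratic test of Theorem \ref{thm:quadratic_test}. Since the prime above $5$ was chosen inert in $\mathcal{O}_K$, the characteristic polynomial of $\overline{f}$ on $A_N$ agrees with an irreducible factor of $\overline{s_6} \bmod 5$, so Theorem \ref{thm:realized} produces the required supersingular K3 surface and automorphism.

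The principal obstacle I anticipate is the combinatorial search for compatible glue maps on the non-$5$-primary parts: cancelling the $\FF_2$-contribution of $C_0$ against itself while keeping the involution compatible with the gluing, and lining up the $\FF_7$-glue between $S$ and $C_{14}$ with matching action of $\overline{f_0}$ and $\overline{f_{C_{14}}}$. This is the step where I would rely most on computer algebra, as in the $\lambda_{12}$ case above.
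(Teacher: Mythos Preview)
Your proposal has a structural gap that cannot be repaired within the chosen scheme. You ask for the Salem factor $S=L_0(a)$, a lattice of rank $6$, to have $5$-primary discriminant $(A_S)_5\cong\FF_5^{8}$. But the discriminant group of a rank-$n$ lattice is generated by at most $n$ elements, so an $\FF_5$-vector space of dimension $8$ cannot occur for $n=6$. (The stated Artin invariant $\sigma=4$ in the proposition is in fact a slip; the construction in the paper produces $\sigma=3$, i.e.\ $A_N\cong\FF_5^{6}$.) A second problem is the proposed glue over $7$ between $S$ and the $c_{14}$-piece: since $c_{14}(x)\equiv(x+1)^6\bmod 7$ while $s_6(-1)=1$, one has $(x+1)\nmid \overline{s_6}$ in $\FF_7[x]$, hence $7\nmid\res(s_6,c_{14})$ and no $7$-glue between these two factors exists. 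The prime $7$ is feasible for $s_6$, but through other cyclotomic polynomials, not $c_{14}$.

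The paper's construction avoids both issues by reversing the roles. The Salem factor carries \emph{no} $5$-twist at all: $S$ is twisted by a prime of norm $13$ so that $A_S\cong\FF_{13}^2$ with $\overline{f_0}$ acting via $x^2+8x+1=\gcd(\overline{s_6},\overline{c_{14}})\bmod 13$. The $5$-contribution is placed on the $c_{14}$-lattice instead: since $5$ is inert in $\ZZ[\zeta_{14}]$, the twist $C14(5b)$ has $(A_{C14(5b)})_5\cong\FF_5^{6}$ with irreducible action. One then glues $S$ and $C14(5b)$ over $13^2$; the leftover $\FF_7$ from $c_{14}(-1)=7$ is cancelled by a rank-$2$ $(x^2-1)$-lattice $(M,f_M)$, and the remaining rank $8$ is filled by $(E_8,\id)$. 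Thus the gluing chain is $S\,{-}\,C14\,{-}\,M$ over $13^2$ and $7$ respectively, not $S\,{-}\,C14$ over $7$ as you propose, and the irreducibility on $A_N$ comes from the inertness of $5$ in the \emph{cyclotomic} field rather than in the Salem field.
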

\begin{proof}
 
  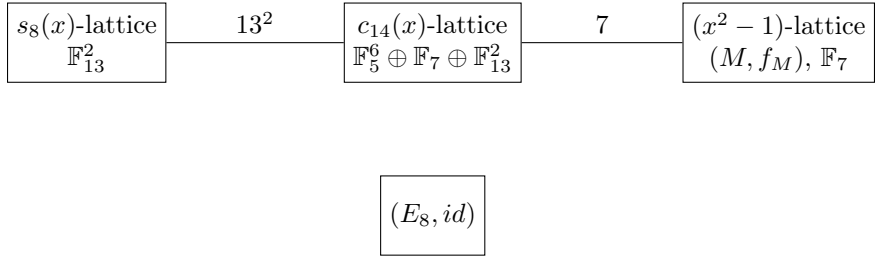
\begin{figure}[!htbp]
   \tikzstyle{block} = [draw, rectangle, minimum height=3em, minimum width=3em]
   \tikzstyle{virtual} = [coordinate]
 
   \begin{tikzpicture}[auto, node distance=2.3cm]
     \node [block, align=center]                 (S)     {$s_{8}(x)$-lattice\\ $\FF_{13}^2$};
     \node [virtual, right of=S](up){};
     \node [block, right of=up, align=center]   (C)     {$c_{14}(x)$-lattice \\ $\mathbb{F}_5^6 \oplus \mathbb{F}_{7} \oplus \mathbb{F}_{13}^2$};
     \node [virtual, right of=C] (C1) {};
     \node [block, right of=C1, align=center] (C1C2)  {$(x^2-1)$-lattice\\ 
     $(M,f_M)$, $\mathbb{F}_{7}$};
     \node [block, below of=C,align=center] (E8) {$(E_8,id)$};
      Connect nodes
     \draw [-] (S) -- node {$13^2$} (C);
     \draw [-] (C) -- node {$7$} (C1C2);
   \end{tikzpicture}
   \caption{Gluings for $\lambda_{6}$.}\label{fig:glue6}
  \end{figure} 

 The principal $s_6(x)$-lattice $L_0$ is unimodular, and the feasible primes are $2, 3, 7, 13, 23, 29, 31, 37, 41, 59, 67$.
 We choose to twist $L_0$ by a prime $q \in \O_K$ of norm $13$
 such that $S=L_0(q)$ is hyperbolic and $\overline{f}|A_S$ has characteristic polynomial $$x^2+8x+1=\gcd(\overline{s}_6,\overline{c}_{14}) \mod 13.$$ This suggests to glue $S$ with a twist of the principal $c_{14}$-lattice $(C14,f_{14})$. By \cite[Theorem 4.3]{mcmullen:entropy_and_glue} we can find a twist $b \in \ZZ[\zeta_{14}]$ dividing $13$ with the right characteristic polynomial on the $13$-primary part of the discriminant. We can even arrange for $C14(b)$ to be negative definite. Since $5$ is prime in $\ZZ[\zeta_{14}]$, we can take the further twist $C14(5b)$ to get $\left(A_{C14(5b)}\right)_5 \cong \mathbb{F}_5^6$ and $\left(\overline{f_{14}}\right)_5$ with irreducible characteristic polynomial. 
 Now \cite[Theorem 3.1]{mcmullen:entropy_and_glue} provides the existence of a glue map 
 $\phi: A_S \rightarrow (A_{C14(5b)})_{13}$ compatible with the actions. 
 Set $N=S \oplus_\psi C14(5b)$. It is a hyperbolic lattice of rank $12$ and determinant $-5^{6}7$ with order $2$ action on $(A_N)_7$. Then $(N,f_N)$ turn out to glue to
 \[(M,f_M)=\left[-\left(\begin{matrix}
   2 & 1\\
   1 & 4
  \end{matrix}\right),\left(\begin{matrix}
    1 & 1\\
    0 &-1
   \end{matrix}\right)\right] \]
  We conclude by confirming positivity and filling up the remaining rank $8$ by $(E_8,id)$. 
 \end{proof}

\begin{proposition}
 The Salem numbers $\lambda_{2}$ and $\lambda_4$ are realized in the supersingular K3 surface of Artin invariant $\sigma=1$ in characteristic 5.
\end{proposition}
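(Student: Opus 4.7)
I would follow the general strategy spelled out earlier in the paper: for each $d\in\{2,4\}$, construct a positive isometry $f$ of an Artin-invariant-$1$ supersingular K3 lattice $N$ in characteristic $5$ whose characteristic polynomial has $s_d(x)$ as a factor, and then invoke Corollary \ref{cor:Strong_Torelli}. The key simplifying observation is that for these small degrees the discriminant group $A_N \cong \mathbb F_5^2$ is so small that the action of $\overline{f}$ on it turns out to be scalar ($\pm\mathrm{id}$); hence every strictly characteristic subspace of $A_N\otimes\kappa$ is automatically preserved by $\overline f$, and the existence of such a $P$ follows from the surjectivity of the period map (Theorem \ref{Tor1}). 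By uniqueness of indefinite odd $p$-elementary lattices of given signature and determinant, any non-neutral lattice produced of signature $(1,21)$ and determinant $-25$ is isometric to the Artin-$1$ supersingular K3 lattice.

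For $d = 2$, the principal $s_2(x)$-lattice $L_0$ has rank $2$, signature $(1,1)$ and determinant $-5$, and $\overline{f_0}$ acts as $-\mathrm{id}$ on $A_{L_0}\cong\mathbb F_5$ since $\overline{s_2}(x)\equiv (x+1)^2\pmod 5$. I would take $S = L_0$ together with a negative-definite cyclotomic factor $C$ of rank $20$ and determinant $5$, such as $C = E_8(-1)\oplus E_8(-1)\oplus A_4(-1)$ carrying an isometry acting as $\pm\mathrm{id}$ on each discriminant summand. The $A_4(-1)$-isometry must be chosen so that the discriminant forms on $A_S$ and on the $5$-primary part of $A_C$ are of opposite square type, guaranteeing that the resulting $A_N\cong\mathbb F_5^2$ is non-neutral. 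Positivity of $f = f_0\oplus g$ is then verified with the quadratic positivity test of Theorem \ref{thm:quadratic_test}; Corollary \ref{coro:positivity_finite_part_not_important} gives flexibility in the choice of $g$.

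For $d = 4$, the principal $s_4(x)$-lattice $L_0$ has $|\det L_0| = |s_4(1)s_4(-1)| = 3$. Since $|A_N| = 25$ is coprime to $3$, the entire $3$-primary part of $A_S$ has to be killed by gluing. The reduction $\overline{s_4}(x)\equiv (x+1)^2(x^2+1)\pmod 3$ identifies $3$ as a feasible prime for which the common factor can be matched by a cyclotomic component containing $c_2$ or $c_4$. I would thus take a suitable twist of $L_0$ by a prime of $\mathcal O_k=\mathbb Z[\phi]$ (where $\phi=(1+\sqrt 5)/2$) above $5$ to realize the desired $5$-primary part of $A_N$, and glue it with a small negative-definite lattice carrying a $c_2$- or $c_4$-action to absorb the $3$-torsion, filling out the remaining rank with copies of $E_8(-1)$. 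Positivity is again confirmed by Theorem \ref{thm:quadratic_test}.

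The hardest part in both cases is the positivity verification. The cyclotomic factor $C$ introduces a large collection of roots in the hyperbolic lattice $N$, and McMullen's linear test is known to be inconclusive in boundary situations where the maximizing state is a genuine combination of pure states; this is precisely the scenario addressed by the quadratic test of Theorem \ref{thm:quadratic_test}, which gives a necessary and sufficient criterion. Once positivity is settled and $\overline f$ acts as a scalar on $A_N$ (so that any strictly characteristic subspace is preserved), Corollary \ref{cor:Strong_Torelli} produces the required automorphism of the unique supersingular K3 surface with Artin invariant $\sigma = 1$ in characteristic $5$.
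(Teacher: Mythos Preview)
Your approach is genuinely different from the paper's, and considerably more involved. For these two small degrees the paper bypasses the lattice-theoretic machinery and the Torelli theorem entirely: it observes that the supersingular K3 surface with $\sigma=1$ over $\overline{\mathbb{F}}_5$ is the Kummer surface $\mathrm{Km}(E\times E)$ for $E=E_{\zeta_3}$ (which has supersingular reduction at $5$), and then, following \cite{mcmullen:siegel_disk}, writes down explicit matrices $F_2,F_4\in GL_2(\mathbb{Z}[\zeta_3])$ whose induced automorphisms of $\mathrm{Km}(E\times E)$ over $\mathbb{C}$ have entropy $\log\lambda_2$ and $\log\lambda_4$. Since entropy is preserved under good reduction, this suffices. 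The construction is both more elementary and fully explicit: no positivity test, no gluing, no discriminant-form bookkeeping.

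Your proposal instead runs the general strategy. The outline is plausible --- in particular your observation that a scalar action of $\overline f$ on $A_N$ makes every strictly characteristic subspace $\overline f$-invariant, so that Corollary~\ref{cor:Strong_Torelli} applies without the irreducibility hypothesis of Theorem~\ref{thm:realized}, is correct and a nice point. However, several details are off or left open. You write ``indefinite odd $p$-elementary lattices'' where you need even. For $d=4$ the trace polynomial of $s_4(x)=x^4-x^3-x^2-x+1$ is $y^2-y-3$, so the trace field is $\mathbb{Q}(\sqrt{13})$, not $\mathbb{Q}(\sqrt 5)$, and $\mathcal{O}_k\neq\mathbb{Z}[\phi]$. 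Most importantly, positivity is only asserted, not checked: with a rank-$20$ (resp.\ rank-$18$) negative-definite cyclotomic factor the lattice $N$ has a huge supply of roots, and it is not clear a priori that a positive isometry with the prescribed Salem factor exists --- your appeal to Theorem~\ref{thm:quadratic_test} names the tool but does not carry out the computation. The paper's Kummer construction sidesteps exactly this difficulty.
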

\begin{proof}
For these Salem numbers of small degree we follow a different strategy, along the lines of the proof of \cite[Theorem 1.3]{mcmullen:entropy_and_glue}, which gives a more explicit construction of the automorphisms not relying on the Torelli theorem.

First of all, note that the supersingular K3 surface with Artin invariant $\sigma=1$ over $\overline{\mathbb{F}_5}$ is the Kummer surface associated to the product of any two supersingular elliptic curves. For example we can consider the reduction modulo 5 of $E=E_{\zeta_3}$, the complex elliptic curve of $j$-invariant 0. By general theory, if $X$ is any smooth projective variety with an automorphism $F$ defined over $\mathbb{Q}$, the entropy of $F|X(\mathbb{C})$ coincides with the entropy of $F|X_p$ for any prime $p$ of good reduction (this follows from the standard comparison theorems between singular and \'etale cohomologies and the properties of good reduction).

Therefore, it is enough to construct automorphisms of ${\rm Km}(E\times E)$ with entropies $\lambda_2$ and $\lambda_4$. Moreover, according to the discussion in \cite[Section 4]{mcmullen:siegel_disk}, it is enough to construct linear maps $F_2, F_4 : \mathbb{C}^2 \rightarrow \mathbb{C}^2$ preserving the lattices $\mathbb{Z}\left[\zeta_3\right]^2$ whose spectral radii $\rho_2,\rho_4$ satisfy $\left|\rho_i\right|^2=\lambda_i$. This is achieved, for example, by the matrices
$$F_4 = \left(\begin{array}{cc} 1 & \zeta_3 - 1\\ -1 & 0 \end{array}\right) \quad \text{and} \quad F_2 = \left(\begin{array}{cc} 1 & 1 \\ 1 & 0 \end{array}\right).$$
\end{proof}

\begin{proposition}
 The supersingular K3 surface $X$ with Artin invariant $\sigma=1$ over $\overline{\mathbb{F}}_{11}$ 
 admits an automorphism $F:X \rightarrow X$ such that the characteristic polynomial of $F^*|\NS(X)$ is given 
 by $s_{18}(x)c_{12}(x)$. 
 It is not realized on a supersingular K3 surface in characteristic $5$. 
\end{proposition}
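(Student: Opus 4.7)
The plan is to prove the two claims separately: first, construct an automorphism with characteristic polynomial $s_{18}(x)c_{12}(x)$ over $\overline{\mathbb{F}}_{11}$ using the gluing template of the previous propositions, and second, exclude $s_{18}$ as a Salem factor in characteristic $5$ by running the enumeration of \S8 exhaustively.

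For the construction in characteristic $11$, I will first verify that $11$ is a feasible prime for $s_{18}$, i.e.\ that $11 \mid \res(s_{18}, c_{12})$, so that $s_{18}$ and $c_{12}$ share a common factor modulo $11$. Next I will determine the factorization of $11$ in $\mathcal{O}_k$ and pick a prime $\mathfrak{q}$ above $11$ together with a unit $u\in\mathcal{O}_k^{\times}$ so that the twist $S = L_0(u\mathfrak{q})$ of the principal $s_{18}$-lattice is hyperbolic of signature $(1,17)$ and carries a rank-$2$ summand $\mathbb{F}_{11}^2$ in $A_S$ on which $\overline{f_0}$ acts with characteristic polynomial equal to the common factor of $\overline{s}_{18}$ and $\overline{c}_{12}$ in $\mathbb{F}_{11}[x]$. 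The cyclotomic factor $C$ will be a negative-definite twist of the principal $c_{12}$-lattice of rank $4$ with isometry $f_C$ of order $12$; since $c_{12}$ is simple reciprocal and the principal lattice is unimodular, signs and signatures can be arranged by a unit twist. A glue map over $11$ compatible with the actions is then produced via \cite[Theorem 3.1]{mcmullen:entropy_and_glue}, yielding a lattice $N$ of signature $(1,21)$ and determinant $-11^2$ with discriminant group $\mathbb{F}_{11}^2$ on which $\overline{f}$ acts irreducibly. Positivity is confirmed by the linear test, or by Theorem \ref{thm:quadratic_test} if the linear one fails, and Theorem \ref{thm:realized} then produces $X$ and $F$.

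For the non-realization part, I will execute steps (1)--(8) of \S8 with $s = s_{18}$ and $p = 5$. First compute the feasible primes for $s_{18}$ and enumerate the finite list of twists $L_0(a)$ satisfying conditions (1)--(3) of Theorem \ref{thm:realizability_conditions}, including the bound $D(n) \leq 4$ on the order of $\overline{f_0}|5A_{L_0(a)}$ and divisibility of $\overline{f_0}|5A_{L_0(a)}$ by a product $\mu$ of distinct cyclotomic polynomials of degree at most $4$. Then discard candidates for which no unit twist yields signature $(1,17)$ or passes the quadratic positivity test. If $5$ happens not to be feasible for $s_{18}$, Corollary \ref{cor:p_not_feasible} additionally forces $A_{C,5} = 0$, which drastically restricts the negative-definite cyclotomic factor $C$ of rank $4$ and typically closes the argument on its own.

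The main obstacle is the exhaustive verification in the non-realization part: one must loop over every product of cyclotomic polynomials of total degree $\leq 4$ (the $c_k(x)$ with $\varphi(k) \leq 4$) and rule out every possible gluing in characteristic $5$. This check is finite and mechanical, carried out with the SageMath package and the integer/quadratic programming tools used elsewhere in the paper, but it is the only place where a priori a miracle could occur. The negative answer is plausible because $\lambda_{18}$ is the minimal Salem number in degree $18$, so $s_{18}$ has a rigid ramification structure, and the feasible-prime pattern at $p=5$ happens to be incompatible with supersingular gluing, in contrast to the other even degrees $d\leq 22$ treated in the preceding propositions.
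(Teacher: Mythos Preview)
Your construction in characteristic $11$ has a genuine gap: $11$ is \emph{not} a feasible prime for $s_{18}$. The feasible primes are only $7$ and $13$, so $s_{18}$ and $c_{12}$ share no common factor modulo $11$, and Theorem~\ref{thm:resultant} forbids any gluing over $11$. In particular your plan to twist the Salem factor by a prime above $11$ and glue against the $c_{12}$-lattice over $11$ cannot be carried out. The paper proceeds differently: the Salem factor $S$ is the twist of norm $13$ (so $A_S\cong\mathbb{F}_{13}^2$, with no $11$-part at all), and the $c_{12}$-lattice is twisted by factors of both $11$ and $13$ so that $A_C\cong\mathbb{F}_{11}^2\oplus\mathbb{F}_{13}^2$. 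The gluing happens over $13^2$, and what survives as $A_N$ is the $\mathbb{F}_{11}^2$ coming from the cyclotomic side, where $\overline{f}$ acts with irreducible characteristic polynomial. This is consistent with Corollary~\ref{cor:p_not_feasible}: since $11$ is not feasible, one of $A_{S,11}$, $A_{C,11}$ must vanish, and here it is $A_{S,11}$.

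For the non-realization in characteristic $5$ your exhaustive plan would in principle succeed, but you misapply Corollary~\ref{cor:p_not_feasible}: it does not force $A_{C,5}=0$, only that one of $A_{S,5}$, $A_{C,5}$ vanishes, so the case $A_{S,5}=0$, $A_{C,5}\neq 0$ must still be excluded. The paper's argument is sharper and avoids a full enumeration. After steps (1)--(7) only the single twist of norm $13$ survives for $S$, and the only compatible cyclotomic factor is $c_{12}$. The decisive observation is that $5$ is inert in the relevant trace field but splits in the degree-two extension above it; consequently any twist over $5$ (on either side) produces a $5$-discriminant on which the minimal polynomial of $\overline{f}$ is reducible, contradicting Proposition~\ref{prop:perfect_power}. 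You should organize your argument around this splitting behaviour rather than a brute-force loop.
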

\begin{proof}
  We begin by proving that $\lambda_{18}$ is not realized in characteristic $p=5$. 
  The principal $s_{18}$-lattice is unimodular, and the feasible primes are $7$ and $13$. 
  By the time we reach step (7) of the general strategy we are left with a single twist $a$ (up to units) of norm $13$.
  Then the only possibility for the cyclotomic factor $c(x)$ is $c_{12}=x^4-x^2+1$, which is a simple reciprocal polynomial.
  Hence $C$ must be a twist of the principal $c_{12}$-lattice by factors of $5$ and $13$.
  However, $5$ is prime in the trace field $k = \mathbb{Q}[y]/r_{18}(y)$, but splits in the Salem field $K=\mathbb{Q}[x]/s_{18}(x)$. 
  This results in the minimal polynomial on the $5$-discriminant group being reducible. 
  In consequence $\lambda_{18}$ is not realizable in characteristic $5$.
  
  However in characteristic $11$ this is possible. We can find a twist $b$ of the principal $c_{12}$-lattice $C12$ 
  such that $C12(b)$ is negative definite, $A_{C12(b)}=\FF_{11}^2\oplus \FF_{13}^2$, the characteristic polynomials
  on $\FF_{13}^2$ match  and the characteristic polynomial of $\overline{f_1}$ on $\FF_{11}^2$ is irreducible.
  We get the existence of a gluing $N=S\oplus_\psi C12(b)$ along $13^2$ such that $A_N=(A_{C12(b)})_{11}$.
  Positivity of the resulting isometry is confirmed by the (quadratic) test. 
\end{proof}

 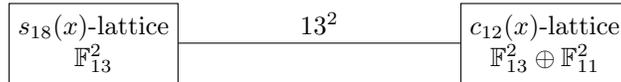
\begin{figure}[!htbp]
  \tikzstyle{block} = [draw, rectangle, minimum height=3em, minimum width=3em]
  \tikzstyle{virtual} = [coordinate]

  \begin{tikzpicture}[auto, node distance=2cm]
    \node [block, align=center]                 (S)     {$s_{18}(x)$-lattice\\ $\mathbb{F}_{13}^2$};
    \node [virtual, right of=S](up){};
    \node [virtual, right of=up](up1){};
    \node [block, right of=up1, align=center]   (C)     {$c_{12}(x)$-lattice\\ $\mathbb{F}_{13}^2\oplus \mathbb{F}_{11}^2$};
    \draw [-] (S) -- node {$13^2$} (C);
  \end{tikzpicture}
  \caption{Gluing for $\lambda_{18}$.}\label{fig:glue18}
 \end{figure} 

\appendix

\section{Realizing $\lambda_{12}$ over $\mathbb{C}$} \label{app:lambda12_complex}
 \begin{theorem}
  There is a complex projective K3-surface $X$ and $F \in Aut(X)$ such that $h(F)=\log \lambda_{12}$, $\NS(X)\cong U(13) \oplus 2 E_8$ and the action on the holomorphic $2$-form is of order $12$. 
 \end{theorem}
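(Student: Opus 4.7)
The plan is to adapt to the complex setting the strategy developed in Section~\ref{sec:realizability} and applied throughout Section~8, replacing $\NS(X)$ by the unimodular K3 lattice $L_{K3} = 3U \oplus 2E_8$ of signature $(3,19)$ as the ambient object. The isometry $f$ must preserve, in addition to an ample chamber of $\NS(X) \otimes \RR$, a Hodge structure, which will be encoded by an eigenvector of $f|T(X) \otimes \CC$ with eigenvalue a primitive $12$-th root of unity. The desired $\NS(X) \cong U(13) \oplus 2E_8$ has signature $(1,17)$ and discriminant group $\FF_{13}^2$, so by unimodularity of $L_{K3}$ the transcendental lattice $T(X)$ must be an even lattice of rank $4$, signature $(2,2)$ and discriminant group $\FF_{13}^2$ with opposite discriminant form. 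The condition on the holomorphic $2$-form forces the minimal polynomial of $f|T(X)$ to be $c_{12}(x)$, so $T(X)$ is a $c_{12}(x)$-lattice.

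First I would construct $T(X)$ as a twist of the principal $c_{12}$-lattice (which is unimodular, since $|c_{12}(1)c_{12}(-1)|=1$) by a suitable element of $\O_{\QQ(\zeta_{12}+\zeta_{12}^{-1})}$ whose norm accounts for $13^2$, using that $13$ splits completely in $\ZZ[\zeta_{12}]$ because $13 \equiv 1 \pmod{12}$; the splitting gives enough flexibility to realize the prescribed signature $(2,2)$ and the required discriminant form. The characteristic polynomial of $f|\NS$ factors as $s_{12}(x) \cdot c'(x)$ with $c'(x)$ a product of cyclotomic polynomials of total degree $6$. The Salem factor $S=\ker s_{12}(f) \subset \NS$ is built as a twist $L_0(a)$ of the principal $s_{12}$-lattice $L_0$ (of determinant $7$); I would pick $a \in \O_k$ of suitable norm so that $S$ is hyperbolic of signature $(1,11)$ and $A_S$ contains the factor $\FF_7$ (which comes for free) together with additional $13$-primary part to be matched with $T(X)$.

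Next I would choose a negative-definite cyclotomic complement $C' \subset \NS$ of rank $6$, with an isometry $f_{C'}$ of characteristic polynomial $c'(x)$, designed so that its discriminant form kills the $7$-primary part of $A_S$ under the gluing formalism of Section~\ref{sec:glue}. Gluing $S \oplus C' \hookrightarrow M$ over the prime~$7$ produces a hyperbolic $13$-elementary lattice $M$ of signature $(1,17)$ and discriminant group $\FF_{13}^2$; by Nikulin's uniqueness theorem for indefinite $p$-elementary lattices of rank at least $3$, $M$ is forced to be isometric to the prescribed $U(13) \oplus 2E_8$. A further unimodular gluing $M \oplus T(X) \hookrightarrow L_{K3}$ along the natural isomorphism $A_M \cong A_{T(X)}$ (compatible with the actions) then realizes $f$ as an isometry of $L_{K3}$.

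To conclude, I would verify positivity of $f|M$ by the quadratic positivity test of Theorem~\ref{thm:quadratic_test}, and then invoke the Torelli theorem for complex projective K3 surfaces: an eigenvector of $f|T(X) \otimes \CC$ with primitive $\zeta_{12}$-eigenvalue determines a Hodge decomposition preserved by $f$, whose positive cone chamber is the one supplied by the positivity test. The main obstacle is precisely this positivity step: the analogous construction in characteristic~$5$ carried out in the body of the paper showed that the linear positivity test fails for $\lambda_{12}$ and that only the quadratic test is decisive, so one should expect the same phenomenon here, which is exactly the scenario the quadratic test was designed to handle. A secondary subtlety is ensuring that the glued lattice $M$ is not merely abstractly isometric to $U(13)\oplus 2E_8$ but is so in a way that allows the required Hodge-theoretic positivity; this is controlled by carefully selecting the twist $a$ and the cyclotomic complement $C'$ before applying Theorem~\ref{thm:quadratic_test}.
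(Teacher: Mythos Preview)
Your proposal is correct and follows essentially the same route as the paper: twist the principal $s_{12}$-lattice over $13$ to get $S$ of signature $(1,11)$ with $A_S \cong \FF_7 \oplus \FF_{13}^2$, take $T(X)$ as a twist of the principal $c_{12}$-lattice of signature $(2,2)$ and discriminant $\FF_{13}^2$, glue $S$ with a rank-$6$ negative-definite lattice over $7$ to form $\NS$, and confirm positivity via the quadratic test. The paper makes explicit what you leave generic: the cyclotomic complement $C'$ is forced to be $A_6$ (the unique negative-definite rank-$6$ lattice of determinant $7$), equipped with the isometry $g$ given by the Dynkin-diagram central reflection composed with $-\id$, exactly as in the $\lambda_{10}$ construction.
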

 \begin{proof}
 For $s_{12}$ we get the $3$ feasible primes $7,13,31$. Following the general strategy in the complex case, we end up with three twists (up to units) one above each feasible prime. We continue with the twist above $13$, as $7$ and $31$ lead to many dead ends. 
 Modulo $13$ we find the common factor $\overline{(x+2)(x+7)}$ of $s_{12}$ and $c_{12}$.
 By \cite[Theorem 4.3]{mcmullen:entropy_and_glue}º we can find twists $a,b$ above $13$ of the principal $s_{12}$-lattice $L_0$, and the principal $c_{12}$-lattice $C12$ such that they have characteristic polynomial $\overline{(x+2)(x+7)}$ on the $13$-glue.
 Then \cite[Theorem 3.1]{mcmullen:entropy_and_glue} provides the existence of a glue map. 
 It remains to modify $a$ and $b$ by a unit to obtain the right signatures. 
 Indeed for $a$ one can find a unit $u\in \mathcal{O}_k^\times$ such that $S=L_0(ua)$ is of signature $(1,11)$. 
 For $c_{12}$ it is not possible to realize glue group $\FF_{13}^2$ and signature $(0,4)$ but it is possible to achieve 
 signature $(2,2)$. This indicates that we should take $C12(b)$ as transcendental lattice.
 Since $\det S=|\det L_0 N(a)|=7\cdot 13^2$, the only possibility for the remaining part is a negative definite rank $6$ lattice of determinant $7$, i.e. the $A_6$ root lattice. And indeed the quadratic forms $(q_S)_7\cong -(q_{A_6})$ glue.
 Since the characteristic polynomial of $f|T(X)$ is a perfect power, it must be a part of $\NS$. What remains is to find a good positive isometry $g$ of $A_6$. Since $(A_S)_7\cong\FF_7$ $\overline{f}$ acts as $-id$, so does $g$ and we can take the pair $(A_6,g)$ from the construction of Lehmer's number.
  
   \begin{figure}[!htbp]
  \tikzstyle{block} = [draw, rectangle, minimum height=3em, minimum width=3em]
  \tikzstyle{virtual} = [coordinate]

  \begin{tikzpicture}[auto, node distance=2cm]
    \node [block, align=center]                 (S)     {$s_{12}(x)$-lattice\\ $(1,11)$, $\mathbb{F}_{13}^2\oplus \FF_7$};
    \node [virtual, right of=S](up){};
    \node [virtual, left of=S](up1){};
    \node [block, left of=up1, align=center]   (C12)     {$c_{12}(x)$-lattice \\ $(2,2)$,  $\mathbb{F}_{13}^2$};
    \node [block, right of=up, align=center]   (A6)     {$(x^2-1)^3$-lattice\\ $(A_6,g)$, $(0,6)$ $\mathbb{F}_{7}$};
    \draw [-] (C12) -- node {$13^2$} (S);
    \draw [-] (S) -- node {$7$} (A6);
  \end{tikzpicture}
  \caption{Gluing for $\lambda_{12}$ in the complex case.}\label{fig:glue12complex}
 \end{figure}
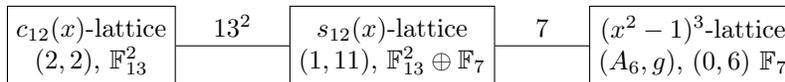 
 The positvity of the isometry on $\NS$ is confirmed by the quadratic positivity test. 
 Note that by Corollary \ref{coro:positivity_finite_part_not_important} we could have taken any other positive $g\in O(A_6)$, acting as $-id$ on the discriminant. The lattice $A_6$ has only $10080$ isometries so a brute-force search is feasible and returns about a hundred suitable isometries.
\end{proof}

\FloatBarrier
\bibliographystyle{JHEPsort}
\bibliography{literature.bib}{}

\end{document}